\def\snm1{\bbs^{n-1}}
\def\sn{\bbs^{n}}
\def\snm1{\bbs^{n-1}}
\def\rnp1{\bbr^{n+1}}
\def\Cal{\mathcal}
\def\C{{\Cal C}}
\def\R{{\Cal R}}
\def\M{{\Cal M}}
\def\S{{\Cal S}}
\def\U{{\Cal U}}
\def\V{{\Cal V}}
\def\F{{\Cal F}}
\def\N{{\Cal N}}
\def\bbr{{\Bbb R}}
\def\bbs{{\Bbb S}}
\def\const{{\hbox{\rm const}}}
\def\rn{\bbr^n}
\def\part{\partial}
\def\intl{\int\limits}
\def\Gam{\Gamma}
\def\Om{\Omega}
\def\a{\alpha}
\def\om{\omega}
\def\del{\delta}
\def\vp{\varphi}
\def\gam{\gamma}
\def\sig{\sigma}
\def\lam{\lambda}
\def\th{\theta}
\def\e{\varepsilon}
\def\t{\tau}
\def\chi{{\bf 1}}
\font\frak=eufm10
\def\fr#1{\hbox{\frak #1}}
\def\frC{\fr{C}}
\newtheorem{theorem}{Theorem}[section]
\newtheorem{lemma}[theorem]{Lemma}
\theoremstyle{definition}
\newtheorem{example}[theorem]{Example}
\theoremstyle{remark}
\newtheorem{remark}[theorem]{Remark}
\theoremstyle{corollary}
\newtheorem{corollary}[theorem]{Corollary}
\newtheorem{proposition}[theorem]{Proposition}
\numberwithin{equation}{section}
\newcommand{\be}{\begin{equation}}
\newcommand{\ee}{\end{equation}}
\newcommand{\bea}{\begin{eqnarray}}
\newcommand{\eea}{\end{eqnarray}}
\newcommand{\Bea}{\begin{eqnarray*}}
\newcommand{\Eea}{\end{eqnarray*}}
\def\sideremark#1{\ifvmode\leavevmode\fi\vadjust{\vbox to0pt{\vss% the remark
 \hbox to 0pt{\hskip\hsize\hskip1em%                          will appear only
\vbox{\hsize2cm\tiny\raggedright\pretolerance10000%          on the side
 \noindent #1\hfill}\hss}\vbox to8pt{\vfil}\vss}}}%
\begin{document}

\title[Reconstruction of functions on the sphere]
{Reconstruction of functions on the sphere from their integrals over
Hyperplane Sections}

%\title[A Problem of Integral Geometry]
%{A Problem of  Integral Geometry for Hyperplane Sections of the Unit Sphere}

\author{B. Rubin }
\address{Department of Mathematics, Louisiana State University, Baton Rouge,
Louisiana 70803, USA}
\email{borisr@lsu.edu}

%\thanks{ The work  was supported in part by
%the Edmund Landau Center for Research in Mathematical Analysis
%and Related Areas, sponsored by the Minerva Foundation (Germany).}

\subjclass[2010]{Primary 44A12; Secondary  44A15}

\keywords{Radon transform  \and Funk transform \and  cosine transform \and Semyanistyi integrals \and inversion formulas.}

%\date{April 16, 2016}

\begin{abstract}  We obtain  new inversion formulas for the   Funk type transforms of two kinds associated to spherical sections by hyperplanes passing through a common  point $A$ which lies inside the $n$-dimensional unit sphere or on the sphere itself. Transforms of the first kind are defined by integration over complete subspheres and can be  reduced to the classical Funk transform.  Transforms of the second kind perform integration over truncated subspheres, like spherical caps or bowls, and can be reduced to the hyperplane Radon transform.
 The main tools are analytic families of $\lam$-cosine transforms, Semyanisyi's integrals,   and modified stereorgraphic projection with the pole at  $A$. Assumptions for functions are close to minimal.

\end{abstract}

\maketitle

\section{Introduction}
\setcounter{equation}{0}

Spherical integral geometry covers a wide range of problems related to reconstructing  functions on the Euclidean sphere  from their integrals  over prescribed submanifolds. Problems of this kind arise in  tomography, convex geometry, and many other areas; see, e.g., \cite{AKK, GGG2, GRS, H11, HMS, Kaz, Ku14, P1, P2, Ru15, ZS}.
A typical example is the Funk transform \cite{Fu11, Fu13} that grew up from the work of Minkowski \cite{Min}.
In the $n$-dimensional setting, this transform integrates a function on the unit sphere $\sn$ in $\bbr^{n+1}$  over cross-sections of this sphere by hyperplanes passing through the origin $o$ of $\bbr^{n+1}$; see (\ref{fu}) below.

In the present paper we obtain  new results related to the  similar  transforms  over spherical sections by hyperplanes passing through an arbitrary fixed point $A$ which lies either inside the sphere or on the sphere itself. 
A distinctive feature of such transforms in comparison with the Funk case is that if $A\neq o$, they do not commute with {\it all} orthogonal transformations and need special tools for their investigation. Modifications of these transforms act on functions which are defined  on 
truncated  spheres, like spherical caps or bowls. An important problem is to  reconstruct these functions  from their integrals over the corresponding planar slices.

\vskip 0.2truecm

\noindent {\bf Setting of the Problem, Brief History, and Main Results.}

\vskip 0.2truecm

Given a  point $A$ in $\rnp1$, 
let $\t_A$ be a hyperplane  passing through $A$. We assume $A=(0, \ldots, 0,a)$,  $-1<a\le 1$, and set \[\sn_a=\{\eta=(\eta_1, \ldots, \eta_{n+1})\in \sn:\, \eta_{n+1}<a\}.\] Our main concern is the Radon-like transforms
\be\label{843ft}
(\F_a f)(\t_A)=\intl_{\t_A \cap \,\sn} f(\eta) \, d_{\t_A} \eta, \quad (\S_a f)(\t_A)=\intl_{\t_A \cap \,\sn_a} f(\eta) \, d_{\t_A} \eta,\ee
associated with  spherical slices by the hyperplanes $\t_A$.
These transforms coincide when $a=1$, but have different features if $a<1$. The case $a=0$ for $\F_a$ gives the classical Funk transform. In the case, when  $a=1$, $n=2$, and $f$ is smooth, these operators were introduced by Abouelaz and  Daher \cite{AbD} and studied by Helgason \cite [p. 145] {H11}. The case $a=1$ for all $n\ge 2$ was studied in \cite[Section 7.2]{Ru15}, \cite[Section 5]{Ru17} in the general context of Lebesgue integrable functions.

Operators $\F_a$ for $0\le a<1$ and $n=2$ were studied by Quellmalz \cite{Q}, who suggested a nice change of variable in the meridional direction and reduced the inversion problem for $\F_a f$ to the similar one for the Funk transform. The function $f$ was assumed either continuous or belonging to $L^2(\bbs^2)$.\footnote{After the manuscript had been completed, the author became aware of the recent preprint by Quellmalz \cite{Q1}, where the results of \cite{Q} were extended to all $n\ge 2$. The method of \cite{Q1} essentially differs from ours.}
The injectivity of $\F_a$ with $a=1$ was  obtained in \cite{Q} as a limiting case under certain smoothness assumption for $f$.
Another point of view on the inversion problem for $\F_a$  and more general operators, related to planar  sections of $\bbs^2$, was suggested by Gindikin et al.   \cite{GRS} in terms of the  kappa-operator.

The integrals $\S_a f$ were studied by Salman \cite{Sa16, Sa17} when $0\le a<1$, $n\ge 2$, and $f$ is a continuously  differentiable function with compact support strictly inside $\sn_a$. He used the  stereorgraphic projection to reduce inversion of $\S_a f$ to the similar problem for a certain Radon-like transform over spheres in $\rn$.  The latter was explicitly inverted by making use some technique borrowed from John \cite{Jo4}. The case $a=1$ was covered in \cite{Sa16, Sa17} by passing to the limit as $a\to 1$.

 The aim of the present paper is twofold. Firstly, we extend Quellmalz's approach from \cite{Q} to all $n\ge 2$ and all $a\in (-1,1)$, and perform this extension under minimal assumptions for $f$. Specifically, we assume $f\in L^1(\sn)$, as in the Funk case. To circumvent the main technical difficulty related to the change of variables, we consider $\F_a$ as a member of the analytic family of the relevant $\lam$-cosine transforms for which the corresponding Jacobians can be easily computed.

Secondly, we suggest a new approach to the study of the operators $\S_a$. Instead  of the stereographic projection with the pole $(0, \ldots, 0,1)$, as in  \cite{Sa17}, we choose the pole at $(0, \ldots, 0,a)$. The case $a=1$ agrees with the usual stereographic projection, as in \cite{H11} for $n=2$. As a result, $\S_a$ expresses through the classical hyperplane Radon transform $Rf$ of functions on $\rn$ and all known properties of the latter, including inversion formulas, transfer to  $\S_a$. To perform this transition, we invoke  analytic families  of the  $\lam$-cosine transforms and Semyanistyi's fractional integrals \cite{Se1}, \cite[Section 4.9]{Ru15}. These remarkable integrals  include the Radon transform as a particular case. As an interesting by-product, we obtain a  formula
\be\label{lttaw}
\frC_a^\lam = \V_{a, \lam} R^\lam \U_{a, \lam},\ee
establishing connection between the $\lam$-cosine transform $\frC_a^\lam$ and the Semyanistyi transform $R^\lam$ via certain bijections  $\V_{a, \lam}$ and $\U_{a, \lam}$ (see Lemma \ref {kiaw}). Similar formula was known before (see, e.g., \cite[Section 5.2]{Ru15}) only for $\lam =-1$ and $a=0$, that corresponds to the Funk  and Radon transforms. The limiting case $\lam =-1$
in (\ref{lttaw}) gives the desired connection between $\S_a$ and $R$.

It is worth noting that  the $\lam$-cosine transforms, Semyanistyi's integrals, and their modifications have proved to be   important objects in integral geometry and  harmonic analysis (see, e.g., \cite {OPR, OR, Ru98d,  Ru02b, Ru02c, Ru08, Ru13a, Ru13c, Ru15}) because they enable one to obtain diverse results for particular values of $\lam$ by making use of analytic continuation. We believe that  (\ref{lttaw}) paves the way to new investigations in the area.

For possible applications and further purposes, we separately mention particular cases $a=0$, $a=1$, and the case of zonal functions, when all formulas look much simpler.

The paper is organized as follows.
Section 2 contains necessary preliminaries.  Section 3 and 4 are devoted to the operators $\F_a$ and $\S_a$, respectively. The Appendix contains technical results related to limiting properties of some integrals depending on a parameter.

\section{Preliminaries}

\subsection {Notation}  In the following $\sn$, $n\ge 2$, denotes the unit sphere in the real Euclidean space $\rnp1$; $e_1, \ldots,  e_{n+1}$ are the coordinate unit vectors;  the space $\bbr^n= \bbr e_1 \oplus \cdots \oplus \bbr e_{n}$ is considered as a coordinate hyperplane of $\rnp1$.
For $\eta =(\eta_1, \ldots, \eta_{n+1})\in  \sn$, $d\eta$ stands for the  Riemannian  measure on $\sn$;
$\sigma_{n} \equiv \int_{\sn} d\eta= 2\pi^{(n+1)/2} \big/ \Gamma ((n+1)/2)$.  We write $d_*\eta= d\eta/\sigma_{n}$ for  the normalized surface element on $\sn$; $\eta^\perp$ is the linear subspace orthogonal to $\eta$. The notation
\[ p_N=(0, \ldots, 0,1), \qquad p_S=(0, \ldots, 0,-1)\]
will be used for the North Pole and the South Pole of $\sn$, respectively. We set
$\sn_+=\{ \eta \in \sn :\, \eta_{n+1} \ge 0\}$,   $\;\sn_-=\{ \eta \in \sn :\, \eta_{n+1} <0\}$,
\[ A=(0, \ldots, 0,a), \qquad -1< a\le 1.\]

The notation for the function spaces $L^p(\sn)$,  $C(\sn)$,   $C^\infty(\sn)$ and the similar spaces on $\rn$ is standard. We write $C_c(\rn)$ and    $C_c^\infty(\rn)$ for the corresponding spaces of compactly supported functions.

 Dealing with  integrals, we say that  the integral
 exists in the Lebesgue sense if it is finite when the expression under the sign of integration is replaced by its absolute value.
 The letter $c$ (sometimes with subscripts)  stands  for an inessential positive constant that may vary at each  occurrence.

\subsection {Riesz potentials}

The  Riesz potential of a  function $g$ on $\bbr$ is defined by the formula
\be\label{rpot} (I^\a g)(x)=\frac{1}{\gamma_1(\a)}
\intl_{\bbr}
 \frac{g(y)\,dy}{|x-y|^{1-\a}},\qquad
  \gamma_1(\a)=
  \frac{2^\a\pi^{1/2}\Gamma(\a/2)}{\Gamma((1-\a)/2)}.
  \ee
We will need this operator only with $0<\a<1$, though other values of $\a$ can be considered.
The following  fact is a particular case of the more general statement in \cite[Lemma 3.2]{Ru15}.

\begin{lemma}\label {riesz-enLEM00}  Let  $g$ be a continuous function in some neighborhood $\Om$ of $x\in \bbr$, and let
\be\label {riesz-en} \intl_{\rn \setminus \Om} |y\,g(y)|\, dy <\infty.\ee
Then
\be\label{apprpr-ty} \lim\limits_{\a\to 0} \,(I^\a g)(x)=g (x).\ee
\end{lemma}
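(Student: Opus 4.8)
The plan is to read the kernel of $I^\a$ in (\ref{rpot}) as an approximate identity concentrating at the origin and to combine this with the continuity of $g$ near $x$. Everything is driven by the behaviour of the constant $\gamma_1(\a)$ as $\a\to 0^+$, so the first thing I would record is its asymptotics: since $\Gam(\a/2)\sim 2/\a$ and $\Gam((1-\a)/2)\to\Gam(1/2)=\pi^{1/2}$, we get $\gamma_1(\a)\sim 2/\a$; in particular $\gamma_1(\a)\to\infty$, while for every fixed $\del>0$
\[ \frac{1}{\gamma_1(\a)}\intl_{|t|<\del}|t|^{\a-1}\,dt=\frac{2\,\del^{\a}}{\a\,\gamma_1(\a)}\longrightarrow 1,\qquad \a\to 0^+. \]
This single computation is the engine of the proof.

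After the change of variable $t=y-x$ one has $(I^\a g)(x)=\gamma_1(\a)^{-1}\intl_{\bbr}|t|^{\a-1}g(x+t)\,dt$. I would fix $\del\in(0,1)$ with $[x-\del,x+\del]\subset\Om$ and split the integral into the near part $|t|<\del$ and the far part $|t|\ge\del$. In the near part write $g(x+t)=g(x)+[g(x+t)-g(x)]$: the first summand contributes $g(x)$ times the mass above, hence tends to $g(x)$, and given $\e>0$ one shrinks $\del$ so that $|g(x+t)-g(x)|<\e$ for $|t|<\del$ (continuity of $g$ at $x$), whence the second summand is bounded in absolute value by $\e\,\frac{2\del^{\a}}{\a\gamma_1(\a)}\le 2\e$ for small $\a$. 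Thus the near part tends to $g(x)$ up to an error controlled by $\e$.

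For the far part the decay factor $\gamma_1(\a)^{-1}\to 0$ does the work, provided $\intl_{|t|\ge\del}|t|^{\a-1}|g(x+t)|\,dt$ stays bounded as $\a\to 0^+$. I would split this region into $\{x+t\notin\Om\}$, where (\ref{riesz-en}) applies through the elementary bound $|t|^{\a-1}\le c\,|t|$ valid for large $|t|$ (so that $|t|^{\a-1}|g(x+t)|\le c\,|y\,g(y)|$ with $y=x+t$), and the remaining bounded portion, on which $g$ is bounded and $|t|^{\a-1}\le\del^{\a-1}\le\del^{-1}$. Both pieces are finite and uniformly bounded for $\a\in(0,\tfrac12]$, so the far part is $O(\gamma_1(\a)^{-1})\to 0$. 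Letting $\a\to0$ and then $\e\to0$ gives (\ref{apprpr-ty}).

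The only genuinely delicate step is the far part: one must guarantee that the weighted integral does not blow up as $\a\to0^+$, and it is precisely here that the growth restriction (\ref{riesz-en}) is indispensable---continuity near $x$ alone does not suffice. The rest is the standard approximate-identity scheme once the asymptotics $\gamma_1(\a)\sim 2/\a$ and the mass normalization are established.
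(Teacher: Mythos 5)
Your argument is correct in substance and is the classical approximate-identity proof, but it is a genuinely different route from the paper's. The paper never proves Lemma \ref{riesz-enLEM00} directly: it cites \cite[Lemma 3.2]{Ru15} and instead proves, in the Appendix, the generalization Lemma \ref{riesz-enLEM} in which the density $v(\a,y)$ may itself depend on $\a$. That proof proceeds by passing to polar coordinates in $y-x$, reducing everything to the Gamma-kernel limit of Lemma \ref{weakenedz}, $\frac{1}{\Gam(\a)}\int_0^\infty t^{\a-1}u(\a,t)\,dt\to u_0$, which is in turn established by splitting at $\del$ and integrating by parts through the primitive $\vp_\a(t)=\int_0^t u(\a,s)\,ds$. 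You replace this machinery by the direct computations $\a\gamma_1(\a)\to 2$ and $\tfrac{2\del^\a}{\a\gamma_1(\a)}\to 1$, together with the split $g(x+t)=g(x)+[g(x+t)-g(x)]$; your near-part estimate plays the role of hypothesis (a) of Lemma \ref{weakenedz} and your far-part bound the role of hypothesis (b). Your route is more economical for a fixed $g$; what the paper's formulation buys is uniformity in the parameter, which is what Lemmas \ref{WWWLEM} and \ref{WWWny} --- the actual workhorses behind Lemmas \ref{passlim2} and \ref{jaau} --- require, since there the integrand varies with $\lam$.

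One step in your far part deserves tightening. The bound $|t|^{\a-1}\le c_\del\,|t|$ for $|t|\ge\del$ is fine (with $c_\del=\del^{-2}$, uniformly in $\a\in(0,1)$), but you then pass from $|t|\,|g(x+t)|$ to $|y\,g(y)|$ with $y=x+t$, silently identifying $|t|=|y-x|$ with $|y|$. This needs $\int_{\bbr\setminus\Om}|g|<\infty$ on bounded portions, which (\ref{riesz-en}) fails to supply near $y=0$ when $0\notin\Om$ (it tolerates, e.g., $g(y)\sim|y|^{-3/2}$ there, for which the far-part integral, and indeed $(I^\a g)(x)$ itself, diverges); similarly, "the remaining bounded portion, on which $g$ is bounded" presumes you have first shrunk $\Om$ to a bounded interval with compact closure on which $g$ is continuous. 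Both wrinkles are inherited from the loose wording of the statement rather than from your scheme: the $n$-dimensional hypothesis (b) of Lemma \ref{riesz-enLEM}, read at $n=1$, uses precisely the weight $|x-y|$ in place of $|y|$, which is exactly what your estimate needs. Run your far part with that weight and the proof closes with no further changes.
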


A generalization of this statement to the case when the function under the sign of integration  depends also on $\a$ (see Lemma \ref{riesz-enLEM} in Appendix) will be used in the present paper.

\subsection {The Funk Transform and the Spherical Slice Transform}

The {\it Funk transform} of a continuous function $f$ on $\sn$ has the form
\be\label{fu} (\F f)(\xi)=\frac{1}{\sig_{n-1}}\intl_{\sn \cap \xi^\perp} f(\eta)\, d_\xi\eta, \qquad \xi \in \sn,\ee
where $d_\xi\eta$ stands for the Riemannian surface measure on $\sn \cap \xi^\perp$.  If $f\in L^1(\sn)$, then $(\F f)(\xi)$ is finite for almost all $\xi\in \sn$ and
the operator $\F$ extends as a linear bounded operator from $L^1(\sn)$ to $L^1(\sn)$. Properties of the Funk transform are described in numerous publications; see, e.g., \cite{GGG2, H11, P1, Ru15}.
In particular, if $\eta =(\eta_1, \ldots, \eta_{n+1})\in \sn$ and $f(\eta)\equiv f_0 (\eta_{n+1})$, i.e.
$f$ is a zonal function,  then $\F f$ is zonal too. If, for $f$ zonal, we set $(\F f)(\xi)=F_0(\rho)$, where $\rho=\sqrt {1-\xi_{n+1}^2}$ is  the sine of the geodesic distance between $\xi$ and the North Pole $p_N$,
then
\be\label{ierertw}
\F_0 (\rho)=c\,\rho^{2-n}\intl_0^\rho f_0 (t)(\rho^2 -t^2)^{(n-3)/2}\, dt, \qquad c=\frac{2\sig_{n-2}} {\sig_{n-1}};\ee
cf. \cite [Theorem 5.21]{Ru15}. Abel type integrals of the form (\ref{ierertw}) were studied in  many sources; see, e.g., \cite {Ru15, SKM}.

We shall consider  modifications of (\ref{fu}) for the non-central cross-sections of $\sn$ by the hyperplanes passing through an arbitrary  fixed point inside the sphere $\sn$ or on the sphere itself. Without loss of generality, we  choose such a point to be  $A=(0, \ldots, 0,a)$, $-1< a\le 1$. Replacing $\xi^\perp$ in (\ref{fu}) by the shifted hyperplane $\xi^\perp +ae_{n+1}$, we obtain the collection of spherical slices
\bea \gam_a (\xi)&=& \{\eta \in \sn: \xi \cdot (\eta -a e_{n+1})=0\} \nonumber\\
\label{sl} &=& \{\eta \in \sn: \xi \cdot \eta =a \xi_{n+1}\},
\eea
labeled by the point $\xi$. The corresponding {\it spherical slice transform} is defined by
\be\label{fua} (\F_a f)(\xi)=\intl_{\gam_a (\xi)} f(\eta)\, d_{\gam_a}\eta, \qquad \xi \in \sn,\ee
where $d_{\gam_a}\eta$ stands for the  surface area measure on $\gam_a (\xi)$. Clearly, $\gam_a (-\xi) = \gam_a (\xi)$ and $(\F_a f)(-\xi)= (\F_a f)(\xi)$.
Every slice $\gam_a (\xi)$ is an $(n-1)$-dimensional Euclidean sphere with center at $(a\xi_{n+1})\xi$ and radius $(1-a^2 \xi_{n+1}^2)^{1/2}$. Hence
\be (\F_a 1)(\xi)=\sig_{n-1} (1-a^2 \xi_{n+1}^2)^{(n-1)/2}.\ee
This simple equality implies the following statement.
\begin{proposition} Let $-1< a\le 1$, $n\ge 2$. The slice transform $\F_a$ is a linear bounded operator on $L^\infty (\sn)$.
\end{proposition}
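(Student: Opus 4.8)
The plan is to reduce the claim to a single pointwise estimate driven by the already computed value of $\F_a 1$. Since $f\in L^\infty(\sn)$ and the surface measure $d_{\gam_a}\eta$ on each slice $\gam_a(\xi)$ is nonnegative, I would first observe that the integral defining $(\F_a f)(\xi)$ in (\ref{fua}) is absolutely convergent for every $\xi$: the total mass of $d_{\gam_a}\eta$ on $\gam_a(\xi)$ equals $(\F_a 1)(\xi)$, which is finite by the displayed formula $(\F_a 1)(\xi)=\sig_{n-1}(1-a^2\xi_{n+1}^2)^{(n-1)/2}$. Thus $\F_a f$ is defined at every point, and by monotonicity of the integral together with the triangle inequality we get the pointwise bound
\be\label{ptwise-bd}
|(\F_a f)(\xi)|\le \intl_{\gam_a (\xi)} |f(\eta)|\, d_{\gam_a}\eta \le \|f\|_{L^\infty(\sn)}\,(\F_a 1)(\xi).
\ee

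The second step is to bound $(\F_a 1)(\xi)$ uniformly in $\xi$. Here I would simply note that for $-1<a\le 1$ and $\xi\in\sn$ one has $0\le a^2\xi_{n+1}^2\le a^2\le 1$, since $|\xi_{n+1}|\le 1$. Consequently $0\le 1-a^2\xi_{n+1}^2\le 1$, and because $n\ge 2$ the exponent $(n-1)/2$ is positive, so raising a number in $[0,1]$ to this power keeps it in $[0,1]$. This yields the uniform estimate $(\F_a 1)(\xi)\le \sig_{n-1}$ for all $\xi\in\sn$.

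Combining this with (\ref{ptwise-bd}) gives $|(\F_a f)(\xi)|\le \sig_{n-1}\,\|f\|_{L^\infty(\sn)}$ for every $\xi$, hence $\|\F_a f\|_{L^\infty(\sn)}\le \sig_{n-1}\,\|f\|_{L^\infty(\sn)}$. Linearity of $\F_a$ is inherited directly from linearity of the integral in (\ref{fua}), so $\F_a$ is a bounded linear operator on $L^\infty(\sn)$ with operator norm at most $\sig_{n-1}$. (In fact testing on $f\equiv 1$ shows the norm equals $\sig_{n-1}$, attained at any $\xi$ with $\xi_{n+1}=0$.)

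I do not anticipate a genuine obstacle in this argument; the only point requiring a moment's care is the well-definedness of the slice integral for a merely essentially bounded $f$, which is why I would make explicit at the outset that the slice carries finite surface measure equal to $(\F_a 1)(\xi)$. Everything else is the elementary estimate above, the whole force of the statement being already packaged in the closed-form expression for $(\F_a 1)(\xi)$.
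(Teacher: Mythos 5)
Your proof is correct and is essentially the paper's own argument: the paper derives the proposition directly from the identity $(\F_a 1)(\xi)=\sig_{n-1}(1-a^2\xi_{n+1}^2)^{(n-1)/2}$, which is exactly the pointwise bound $|(\F_a f)(\xi)|\le \|f\|_{L^\infty(\sn)}(\F_a 1)(\xi)\le \sig_{n-1}\|f\|_{L^\infty(\sn)}$ that you spell out. You merely make explicit the details the paper leaves implicit (finiteness of the slice measure and the uniform bound $1-a^2\xi_{n+1}^2\le 1$), plus the sharpness remark on the operator norm.
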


If $-1< a< 1$, i.e., $a=1$ is excluded, this preposition extends to $L^p(\sn)$ for all $p\in [1,\infty]$ and to $C(\sn)$; see  Corollary \ref{cor1} below.

The spherical slice transform (\ref{fua}) agrees with the classical  spherical mean
\bea \label{sphm0}
(M_{\xi} f)(t)\!\!&=&\!\!\frac{1}{\sig_{n-1}}\intl_{\sn\cap \xi^\perp}
\!\!f( \sqrt{1-t^2}\, \eta +t\xi)\,d_\xi\eta\\
\label{sphm01}\!\!&=&\!\!\frac{(1\!-\!t^2)^{(1-n)/2}}{\sig_{n-1}}\intl_{\xi \cdot \eta =t} \!f(\eta)\, d_{\xi, t} \eta, \quad -1<t<1,\eea
where $d_\xi\eta\, $  and  $d_{\xi, t} \eta$  stand for the  corresponding surface area measures.
In the limiting case $t\to \pm 1$ we have $(M_{\xi} f)(\pm \,1)= f(\pm \,\xi)$. Clearly,
\be\label{fuat} (\F_a f)(\xi)=\sig_{n-1} (1-a^2 \xi_{n+1}^2)^{(n-1)/2} (M_{\xi} f)(a\xi_{n+1}).\ee

Numerous properties of the spherical mean $(M_{\xi} f)(t)$ are described in \cite[pp. 503-508]{Ru15}. In particular,
\be\label {clop10} \intl_{\sn} h(\xi
\cdot \eta)\, f(\eta)\, d\eta=\sig_{n-1} \intl_{-1}^1 h(t)  (M_{\xi} f)(t)\,(1-t^2)^{(n-2)/2}\,dt\ee
provided that at least one of these integrals exists in the Lebesgue sense.
In the case $h\equiv 1$ (\ref{clop10})  has a simpler form
 \be \label{cnhos} \intl_{\sn} f(\eta)\, d\eta= \intl_{-1}^1  (1-t^2)^{(n-2)/2}\,
 dt\intl_{\snm1} \!\!f \left( \sqrt{1-t^2}\, \psi +te_{n+1}\right)\,d\psi;\ee
 see \cite[Lemma 1.34]{Ru15}.

\subsection {The $\lam$-Cosine Transform}
In many occurrences it is convenient to consider the Funk transform (\ref{fu}) as a member of the meromorphic family of {\it normalized cosine (or $\lam$-cosine) transforms}
\be \label{cos} (\C^\lam f)(\xi) =\gam_{n,\lam}
 \intl_{\sn} f(\eta)\, |\xi \cdot \eta|^{\lam} \,d_*\eta, \ee
where $\, d_*\eta=d\eta/\sig_n$,  $\;Re \,\lam >-1$, $ \;\lam \neq 0,2,4, \ldots$,
  \be\label{siefu} \gam_{n,\lam}\!=\!\frac{\pi^{1/2}\,\Gamma( -\lam/2)}{\Gamma ((n\!+\!1)/2)\, \Gamma ((1\!+\!\lam)/2)}.\ee
This transform has a long history; see \cite [pp. 283, 361, 527] {Ru15}, \cite{OPR} and references therein. We restrict to the case $-1<\lam <1$, which is sufficient for our purposes.

\begin{lemma} \label{passlim} {\rm (cf. \cite [p. 531]{Ru15})} If $f$ is a continuous function on $\sn$, then for  every $ \xi \in\sn$,
\be\label{lim1}
\lim\limits_{\lam \to -1} (\C^\lam f)(\xi)= c_n\, (\F f)(\xi),\qquad c_n= \frac{\pi^{1/2}}{\Gamma (n/2)}\,.
\ee
\end{lemma}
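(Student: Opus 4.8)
The plan is to compute the limit $\lim_{\lam\to -1}(\C^\lam f)(\xi)$ by reducing the sphere integral in (\ref{cos}) to a one-dimensional integral and recognizing the resulting expression as a Riesz potential, to which Lemma \ref{riesz-enLEM00} applies. First I would use the spherical-mean identity (\ref{clop10}) with $h(t)=|t|^\lam$ to write
\be\label{stepone}
(\C^\lam f)(\xi)=\gam_{n,\lam}\,\frac{\sig_{n-1}}{\sig_n}\intl_{-1}^1 |t|^\lam\,(M_\xi f)(t)\,(1-t^2)^{(n-2)/2}\,dt,
\ee
which is valid for continuous $f$ since the integral converges absolutely for $-1<\lam<1$. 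Setting $g(t)=(M_\xi f)(t)\,(1-t^2)^{(n-2)/2}$, a continuous, compactly supported (on $[-1,1]$) function of $t$, the remaining integral is $\int_{\bbr}|t|^\lam g(t)\,dt$ after extending $g$ by zero.

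Next I would match the kernel $|t|^\lam$ to the Riesz potential (\ref{rpot}) evaluated at $x=0$. Writing $\a=1+\lam$, so that $\lam\to -1$ corresponds to $\a\to 0$, the exponent becomes $|t|^{\a-1}=|0-t|^{-(1-\a)}$, and
\be\label{steptwo}
\intl_{\bbr}|t|^\lam g(t)\,dt=\gamma_1(\a)\,(I^\a g)(0),\qquad \a=1+\lam.
\ee
Since $g$ is continuous near $x=0$ and vanishes outside $[-1,1]$, the hypotheses (\ref{riesz-en}) of Lemma \ref{riesz-enLEM00} are satisfied, so $(I^\a g)(0)\to g(0)=(M_\xi f)(0)=(\F f)(\xi)$ as $\a\to 0$, using (\ref{sphm01}) with $t=0$ to identify $g(0)$ with the Funk transform (up to the normalizing constant $\sig_{n-1}$).

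It then remains to track all the constants through the limit. Combining (\ref{stepone}) and (\ref{steptwo}),
\be\label{stepthree}
(\C^\lam f)(\xi)=\gam_{n,\lam}\,\frac{\sig_{n-1}}{\sig_n}\,\gamma_1(1+\lam)\,(I^{1+\lam} g)(0),
\ee
so the task reduces to evaluating $\lim_{\lam\to-1}\gam_{n,\lam}\,\gamma_1(1+\lam)$ using the explicit formulas (\ref{siefu}) and (\ref{rpot}) for these Gamma-factor constants, together with $\sig_n=2\pi^{(n+1)/2}/\Gamma((n+1)/2)$. The product should simplify to $c_n\,\sig_{n-1}/\big((\sig_{n-1}/\sig_n)\big)$ arranged so that the final coefficient is exactly $c_n=\pi^{1/2}/\Gamma(n/2)$ as claimed, after the $(M_\xi f)(0)=(\F f)(\xi)/\sig_{n-1}$ factor is absorbed. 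I expect the main obstacle to be purely bookkeeping: verifying that the Gamma functions conspire so that the simple poles and zeros cancel and leave the stated clean constant $c_n$; the analytic input (absolute convergence, continuity, the Riesz-potential limit) is handled entirely by Lemma \ref{riesz-enLEM00}, so the conceptual content is light once the identification (\ref{steptwo}) is made.
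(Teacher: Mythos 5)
Your proposal is correct and is essentially the paper's own method: the paper defers this lemma to the citation [Ru15, p.\ 531], but its proof of the shifted version, Lemma \ref{passlim2}, is exactly your argument --- reduce via (\ref{clop10}) with $h(t)=|t|^\lam$, recognize a one-dimensional Riesz potential with $\a=\lam+1$ evaluated at $x=a\xi_{n+1}$ (here $x=0$), and invoke Lemma \ref{riesz-enLEM00}; your case is the specialization $a=0$, and the hypotheses of Lemma \ref{riesz-enLEM00} hold for the reasons you give ($g$ continuous on $[-1,1]$, extended by zero).

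One bookkeeping slip to fix: in your final paragraph you write $(M_\xi f)(0)=(\F f)(\xi)/\sig_{n-1}$, which contradicts your (correct) identification $g(0)=(M_\xi f)(0)=(\F f)(\xi)$ in the second paragraph --- the factor $1/\sig_{n-1}$ is already built into the definition (\ref{fu}) of $\F$, so there is no extra factor to absorb; had you actually used that relation, the final constant would come out wrong by $\sig_{n-1}$. With the correct identity the constants do close up: $\gam_{n,\lam}\,\gamma_1(1+\lam)=2^{1+\lam}\pi/\Gamma((n+1)/2)\to \pi/\Gamma((n+1)/2)$ as $\lam\to -1$, and then $\frac{\sig_{n-1}}{\sig_n}\cdot\frac{\pi}{\Gamma((n+1)/2)}=\frac{\Gamma((n+1)/2)}{\pi^{1/2}\,\Gamma(n/2)}\cdot\frac{\pi}{\Gamma((n+1)/2)}=\frac{\pi^{1/2}}{\Gamma(n/2)}=c_n$, exactly as claimed.
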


Both transforms (\ref{fu}) and  (\ref{cos}) annihilate odd functions that form the kernel (the null space) of the corresponding operators.

 We introduce the  ``shifted" cosine transform
\be \label{cosa} (\C_a^\lam f)(\xi) =\gam_{n,\lam}
 \intl_{\sn} f(\eta)\, |\xi \cdot (\eta -ae_{n+1})|^{\lam} \,d\eta, \quad -1<a\le 1,\ee
where $\lam$ and $\gam_{n,\lam}$  are the same as in (\ref{cos}).

\begin{lemma} \label{passlim2} If $f$ is a continuous function on $\sn$, then for  every $ \xi \in\sn$  and $|a\xi_{n+1}|<1$,
\be\label{lim1}
\lim\limits_{\lam \to -1} (\C_a^\lam f)(\xi)\!= \!\frac{d_n}{\sqrt {1\!-\!a^2\xi_{n+1}^2}}  \, (\F_a f)(\xi),\quad d_n\!= \!\frac{\pi}{\Gamma ((n\!+\!1)/2)}\,.
\ee
\end{lemma}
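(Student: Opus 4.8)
The plan is to reduce the shifted cosine transform (\ref{cosa}) to a one‑dimensional Riesz potential of the spherical mean $M_\xi f$, and then to let $\lam\to -1$ by invoking Lemma \ref{riesz-enLEM00}. The point is that the weight $|\xi\cdot(\eta-ae_{n+1})|^\lam$ concentrates, as $\lam\to -1$, precisely on the slice $\gam_a(\xi)$ where $\xi\cdot\eta=a\xi_{n+1}$, so the limit should recover an evaluation of the one‑dimensional profile of $f$ at the corresponding height.

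First I would fix $\xi\in\sn$ and write $s=a\xi_{n+1}$, so that $\xi\cdot(\eta-ae_{n+1})=\xi\cdot\eta-s$ and the kernel in (\ref{cosa}) becomes $|\xi\cdot\eta-s|^\lam$. Since $f$ is bounded and $t\mapsto|t-s|^\lam$ is locally integrable for $\lam>-1$, formula (\ref{clop10}) applies with $h(t)=|t-s|^\lam$ and gives
\be\label{step1}
(\C_a^\lam f)(\xi)=\gam_{n,\lam}\,\sig_{n-1}\intl_{-1}^1 |t-s|^\lam\,g(t)\,dt,\qquad g(t)=(M_\xi f)(t)\,(1-t^2)^{(n-2)/2}.
\ee
Because $f$ is continuous, $M_\xi f$ is continuous on $(-1,1)$, so $g$ (extended by zero) is bounded, compactly supported, and continuous on $(-1,1)$; crucially $g$ does not depend on $\lam$, so the plain version of Lemma \ref{riesz-enLEM00} will suffice and the generalized Lemma \ref{riesz-enLEM} is not needed here.

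Next I would set $\a=1+\lam\in(0,1)$ and compare (\ref{step1}) with the definition (\ref{rpot}) of $I^\a$: since $|t-s|^\lam=|s-t|^{-(1-\a)}$, the inner integral equals $\gamma_1(\a)\,(I^\a g)(s)$, whence
\be\label{step2}
(\C_a^\lam f)(\xi)=\gam_{n,\lam}\,\sig_{n-1}\,\gamma_1(1+\lam)\,(I^{1+\lam}g)(s).
\ee
The hypothesis $|a\xi_{n+1}|<1$ means $s\in(-1,1)$, so $g$ is continuous in a neighborhood of $s$ and, being compactly supported, trivially satisfies (\ref{riesz-en}); hence Lemma \ref{riesz-enLEM00} gives $(I^{1+\lam}g)(s)\to g(s)$ as $\lam\to -1$. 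For the constant I would substitute the explicit values of $\gam_{n,\lam}$ from (\ref{siefu}) and of $\gamma_1$ from (\ref{rpot}); the factors $\Gamma(-\lam/2)$ and $\Gamma((1+\lam)/2)$ cancel, leaving $\gam_{n,\lam}\,\gamma_1(1+\lam)=\pi\,2^{1+\lam}/\Gamma((n+1)/2)$, which tends to $d_n$. Thus $\lim_{\lam\to -1}(\C_a^\lam f)(\xi)=d_n\,\sig_{n-1}\,(M_\xi f)(s)\,(1-s^2)^{(n-2)/2}$, and finally (\ref{fuat}) lets me replace $\sig_{n-1}(M_\xi f)(s)$ by $(\F_a f)(\xi)(1-s^2)^{-(n-1)/2}$; the powers of $(1-s^2)$ combine to $(1-s^2)^{-1/2}$, producing exactly the claimed factor $d_n/\sqrt{1-a^2\xi_{n+1}^2}$.

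The main obstacle I anticipate is not the constant bookkeeping (routine once the Gamma‑function cancellation is spotted) but the careful justification that (\ref{clop10}) may be applied for each $\lam\in(-1,-1+\varepsilon)$ and that $g$ meets the hypotheses of Lemma \ref{riesz-enLEM00} near $s$. The condition $|a\xi_{n+1}|<1$ is exactly what keeps $s$ bounded away from the endpoints $\pm1$, where the weight $(1-t^2)^{(n-2)/2}$ degenerates and the slice $\gam_a(\xi)$ collapses to a point; away from those endpoints the reduction and the limit are clean.
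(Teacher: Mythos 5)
Your proof is correct and takes essentially the same route as the paper's: both reduce $(\C_a^\lam f)(\xi)$ via (\ref{clop10}) to a one-dimensional Riesz potential of $g(t)=(M_\xi f)(t)\,(1-t^2)^{(n-2)/2}$ evaluated at $s=a\xi_{n+1}$, pass to the limit by Lemma \ref{riesz-enLEM00} (the condition $|a\xi_{n+1}|<1$ keeping $s$ inside $(-1,1)$), and finish with (\ref{fuat}). Your explicit cancellation $\gam_{n,\lam}\,\gamma_1(1+\lam)=\pi\,2^{1+\lam}/\Gamma((n+1)/2)\to d_n$ merely spells out what the paper calls ``a simple calculation.''
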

\begin{proof} By (\ref{clop10}),
\[(\C_a^\lam f)(\xi) =\gam_{n,\lam} \sig_{n-1}\intl_{-1}^1 |t -a\xi_{n+1}|^{\lam}   (M_{\xi} f)(t)\,(1-t^2)^{(n-2)/2}\,dt.\]
For every fixed $\xi$,  this integral is a constant multiple of the  Riesz potential
\be\label{rpot} (I^\a g)(x)=\frac{1}{\gamma_1(\a)}
\intl_{-1}^1  \frac{g(t)\,dt}{|x-t|^{1-\a}},\qquad   \gamma_1(\a)=
  \frac{2^\a\pi^{1/2}\Gamma(\a/2)}{\Gamma((1-\a)/2)}
  \ee
  (cf. (\ref{rpot})) with
\[ \a=\lam +1, \qquad x= a\xi_{n+1}, \qquad g(t)=(M_{\xi} f)(t)\,(1-t^2)^{(n-2)/2}.\]
Hence, by Lemma  \ref{riesz-enLEM00}  and (\ref {fuat}), a simple calculation gives
   \bea \lim\limits_{\lam\to -1}(\C_a^\lam f)(\xi)&=& d_n\sig_{n-1} (M_{\xi} f)(a\xi_{n+1})\,(1-a^2\xi_{n+1}^2)^{(n-2)/2}\nonumber\\
&=&\frac{d_n}{\sqrt {1\!-\!a^2\xi_{n+1}^2}}  \, (\F_a f)(\xi).\nonumber\eea
\end{proof}

\section {Spherical Slice Transforms via the Funk Transform}

In this section we extend   Quellmalz's method for the slice transform $\F_a$ (see \cite{Q} for $n=2$) to all $n\ge 2$. Everywhere in this section we assume  $-1< a<1$,  that is, $a=1$ is excluded.
We obtain an inversion formula for $\F_a$ and clarify the structure of the kernel of this operator. The basic idea is to express points in $\sn$ in spherical coordinates and change variable in the meridional direction in a proper way.

Specifically, we present a generic point $\eta \in \sn$ in the form
\be\label{eta} \eta \equiv \eta (\psi, u)=\sqrt {1-u^2} \psi + u e_{n+1}, \qquad \psi \in \snm1, \quad |u| \le 1,\ee
and consider the bijective map
\be\label{map} \sn \ni \eta (\psi, u) \xrightarrow{\;\mu\;} \tilde\eta (\psi, v)\in \sn,  \ee
where $u$ and $v$ are related by the formula
\be\label{form}  u=\frac{v+a}{1+av}.\ee
The inverse map $\mu^{-1}$ acts by the rule
\be\label{rule}
\mu^{-1}: \tilde\eta (\psi, v) \to \eta (\psi, u)=\tilde\eta\left (\psi, \frac{u-a}{1-au}\right )\ee
 and moves the ``equator" $v=0$ to the ``parallel"   $u=a$
while keeping the poles $p_N$ and $p_S$ fixed.
For the sake of convenience, we  abuse the notation and write $f(\psi, u)$ in place of $f(\eta (\psi, u))$.

We will  need one more bijective map
\be\label{mapnu} \sn \ni \xi (\vp, s) \xrightarrow{\;\nu\;} \tilde\xi (\vp, t)\in \sn,  \ee
where
\be\label{formnu}  t=s\frac{\sqrt {1-a^2}}{\sqrt {1-a^2s^2}}.\ee
Then
\be\label{mapnu1}
\nu^{-1}:  \, \tilde\xi (\vp, t) \to \xi (\vp, s) = \tilde\xi \left (\vp, s\frac{\sqrt {1-a^2}}{\sqrt {1-a^2s^2}} \right).\ee

\begin{lemma} \label{lem31}  If $f$ and $\Phi$ belong to $\in L^1 (\sn)$, then
\be\label{fff}\intl_{\sn} f(\eta)\, d\eta= (1-a^2)^{n/2} \intl_{\sn} \frac{(f\circ \mu^{-1})(\tilde\eta)}
{(1+a\tilde\eta_{n+1})^n}\, d\tilde\eta,\ee
\be\label{ggg}\intl_{\sn} \Phi(\tilde\xi)\, d\tilde\xi= \sqrt {1-a^2} \intl_{\sn} \frac{(\Phi\circ \nu)(\xi)}
{(1-a^2\xi^2_{n+1})^{(n+1)/2}}\, d\xi.\ee
\end{lemma}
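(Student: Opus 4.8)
The plan is to prove both identities by the same mechanism: each map ($\mu^{-1}$ on the $\eta$-side, $\nu$ on the $\xi$-side) fixes the angular variable $\psi$ (resp. $\vp$) and acts only in the meridional variable, so the surface measure $d\eta$ factors through (\ref{cnhos}) into a product of an angular part $d\psi$ on $\snm1$ and a one-dimensional meridional part. Using the parametrization (\ref{eta}), formula (\ref{cnhos}) tells me that with $\eta=\eta(\psi,u)=\sqrt{1-u^2}\,\psi+ue_{n+1}$ one has $d\eta=(1-u^2)^{(n-2)/2}\,du\,d\psi$. Thus the whole computation reduces to tracking the one-variable Jacobian of the substitution in $u$ (resp.\ $s$), and the identities are just Fubini plus a change of variable in a single real integral.

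First I would establish (\ref{fff}). I write $\int_{\sn}f(\eta)\,d\eta=\int_{\snm1}d\psi\int_{-1}^{1}f(\psi,u)(1-u^2)^{(n-2)/2}\,du$, then substitute $u=(v+a)/(1+av)$ from (\ref{form}). The routine but essential computation is: $du=\frac{1-a^2}{(1+av)^2}\,dv$ and $1-u^2=(1-v^2)(1-a^2)/(1+av)^2$. Raising the latter to the power $(n-2)/2$ and combining with the Jacobian $du$ gives a total factor $(1-a^2)^{n/2}(1-v^2)^{(n-2)/2}(1+av)^{-n}$. Since $(f\circ\mu^{-1})(\tilde\eta(\psi,v))=f(\psi,u)$ by the definition (\ref{rule}) of $\mu^{-1}$, and since $\tilde\eta_{n+1}=v$, reassembling the angular and meridional integrals via (\ref{cnhos}) in the $\tilde\eta$ variable yields exactly (\ref{fff}).

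Next I would treat (\ref{ggg}) in the same way, now using the map $\nu$ of (\ref{mapnu})--(\ref{formnu}). Starting from $\int_{\sn}\Phi(\tilde\xi)\,d\tilde\xi=\int_{\snm1}d\vp\int_{-1}^{1}\Phi(\vp,t)(1-t^2)^{(n-2)/2}\,dt$, I substitute $t=s\sqrt{1-a^2}/\sqrt{1-a^2s^2}$ from (\ref{formnu}). Here the needed elementary identities are $1-t^2=(1-s^2)/(1-a^2s^2)$ and $dt=\sqrt{1-a^2}\,(1-a^2s^2)^{-3/2}\,ds$. Multiplying $(1-t^2)^{(n-2)/2}$ by $dt$ produces the factor $\sqrt{1-a^2}\,(1-s^2)^{(n-2)/2}(1-a^2s^2)^{-(n+1)/2}$, and since $(\Phi\circ\nu)(\xi)=\Phi(\tilde\xi)$ with $\xi_{n+1}=s$, reassembling gives (\ref{ggg}). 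For integrability I would note that both $\mu$ and $\nu$ are bijections of $\sn$ fixing the poles, so the stated Jacobian densities are positive and bounded on compact subsets away from the poles, and the measure-preserving-up-to-weight change of variable is justified for $L^1$ functions by the standard density argument (verify on continuous functions, then extend).

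The only genuine obstacle is bookkeeping rather than conceptual: I must correctly match the power of $(1-u^2)$ (resp.\ $1-t^2$) coming from the spherical measure with the Jacobian of the one-dimensional substitution, and confirm that the exponents combine to the clean forms $(1+a\tilde\eta_{n+1})^{-n}$ and $(1-a^2\xi_{n+1}^2)^{-(n+1)/2}$ stated in the lemma. A minor point worth checking is the boundary behavior at $u,v,s,t=\pm1$ (the poles), where all weights either vanish or stay finite, so no boundary contributions arise and the change of variables is valid on the full sphere. Since everything hinges on these two elementary real Jacobian computations, I expect the proof to be short once the substitutions (\ref{form}) and (\ref{formnu}) are differentiated carefully.
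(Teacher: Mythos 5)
Your proposal is correct and takes essentially the same route as the paper: decompose the spherical measure via (\ref{cnhos}) and perform the one-dimensional substitutions (\ref{form}) and (\ref{formnu}), using exactly the Jacobian identities $du=\frac{1-a^2}{(1+av)^2}\,dv$, $1-u^2=\frac{(1-a^2)(1-v^2)}{(1+av)^2}$ and $dt=\sqrt{1-a^2}\,(1-a^2s^2)^{-3/2}\,ds$, $1-t^2=\frac{1-s^2}{1-a^2s^2}$, with the exponents combining to $(1-a^2)^{n/2}(1+av)^{-n}$ and $\sqrt{1-a^2}\,(1-a^2s^2)^{-(n+1)/2}$ as stated. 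The only difference is cosmetic: the paper carries out the first substitution and declares the second ``similar,'' whereas you verify both explicitly (your extra density argument for $L^1$ is harmless but unnecessary, since the one-dimensional change of variables for a monotone $C^1$ substitution holds directly for integrable functions).
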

\begin{proof}  By (\ref {cnhos}),
 \[ \intl_{\sn} f(\eta)\, d\eta= \intl_{-1}^1  (1-u^2)^{(n-2)/2}\,
 du\intl_{\snm1} \!\!f( \sqrt{1-u^2}\, \psi +ue_{n+1})\,d\psi.\]
 The result follows if we change variables  according to (\ref{form}),  taking into account  that
\[ \frac{du}{dv}=\frac{1-a^2}{(1+av)^2}, \qquad 1-u^2= \frac{(1-a^2)(1-v^2)}{(1+av)^2}. \]
The proof of the second statement is similar.
\end{proof}

Our next aim is to convert the slice transform (\ref{fua}) into the Funk transform  (\ref{fu})  by making use of the map (\ref{map}). To circumvent technicalities related to Jacobians, we invoke  Lemma \ref{passlim2} for the shifted cosine transform.
Given $-1< a<1$ and $\lam \in \bbr$, let
\bea\label{ma} (\M_{a, \lam} f)(\tilde\eta)&=&\frac{(f\circ \mu^{-1})(\tilde\eta)}
{(1+a\tilde\eta_{n+1})^{n+\lam}}, \\
\label{na} (\N_{a, \lam} \Phi)(\xi)&=&(1-a^2)^{(n+\lam)/2} (1-a^2\xi_{n+1}^2)^{\lam/2} (\Phi\circ \nu)(\xi), \quad \eea
where  the maps $\mu$ and $\nu$ are defined by (\ref{map}) and  (\ref{mapnu}), respectively.

\begin{proposition}\label{prop1} The maps  $\M_{a, \lam}$, $\N_{a, \lam}$, and their inverses preserve the space  $C^\infty(\sn)$ and are linear bounded operators on  $C(\sn)$ and $L^p(\sn)$, $1\le p\le \infty$.
\end{proposition}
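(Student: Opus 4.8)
The plan is to recognize that both $\M_{a,\lam}$ and $\N_{a,\lam}$ are instances of a single elementary type of operator, namely
\[ T_{w,\phi}f = w\cdot (f\circ \phi), \]
where $\phi$ is a $C^\infty$ diffeomorphism of $\sn$ onto itself and $w\in C^\infty(\sn)$ satisfies $0<c_1\le w\le c_2$ for some constants. Indeed, $\M_{a,\lam}$ corresponds to $\phi=\mu^{-1}$ with $w(\tilde\eta)=(1+a\tilde\eta_{n+1})^{-(n+\lam)}$, while $\N_{a,\lam}$ corresponds to $\phi=\nu$ with $w(\xi)=(1-a^2)^{(n+\lam)/2}(1-a^2\xi_{n+1}^2)^{\lam/2}$. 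For any such $T_{w,\phi}$, composition with the $C^\infty$ diffeomorphism $\phi$ preserves $C^\infty(\sn)$ and $C(\sn)$, multiplication by $w\in C^\infty(\sn)$ preserves both, and $\|f\circ\phi\|_\infty=\|f\|_\infty$, $\|w\cdot g\|_\infty\le c_2\|g\|_\infty$; hence $T_{w,\phi}$ preserves $C^\infty(\sn)$ and is bounded on $C(\sn)$. Moreover the class $\{T_{w,\phi}\}$ is closed under inversion: solving $w\cdot(f\circ\phi)=g$ gives $T_{w,\phi}^{-1}g=\big(w\circ\phi^{-1}\big)^{-1}\,(g\circ\phi^{-1})$, which is again of the same form, since $(w\circ\phi^{-1})^{-1}$ is smooth and bounded between positive constants and $\phi^{-1}$ is a $C^\infty$ diffeomorphism. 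Thus every assertion about $\M_{a,\lam},\N_{a,\lam}$ transfers automatically to their inverses, and it remains only to verify the two structural hypotheses.

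The key step — and the only place where care is genuinely needed — is checking that $\mu$ and $\nu$ are honest $C^\infty$ diffeomorphisms of $\sn$, not merely smooth reparametrizations of the meridional coordinate. The difficulty is that the spherical parametrization $\eta(\psi,u)=\sqrt{1-u^2}\,\psi+ue_{n+1}$ degenerates at the poles, so smoothness there is not visible from (\ref{form}) or (\ref{formnu}). I would circumvent this by writing the maps in the ambient Cartesian coordinates. Using $u=\eta_{n+1}$, $(\eta_1,\dots,\eta_n)=:\eta'=\sqrt{1-u^2}\,\psi$ together with (\ref{form}), a short computation gives the pole-free formulas
\[ \mu^{-1}(\tilde\eta)=\Big(\tfrac{\sqrt{1-a^2}}{1+a\tilde\eta_{n+1}}\,\tilde\eta',\ \tfrac{\tilde\eta_{n+1}+a}{1+a\tilde\eta_{n+1}}\Big),\qquad \nu(\xi)=\Big(\tfrac{\xi'}{\sqrt{1-a^2\xi_{n+1}^2}},\ \tfrac{\sqrt{1-a^2}\,\xi_{n+1}}{\sqrt{1-a^2\xi_{n+1}^2}}\Big), \]
with $\mu$ and $\nu^{-1}$ given by analogous expressions (replace $a$ by $-a$, respectively invert (\ref{formnu})). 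Since $-1<a<1$, on $\sn$ one has $1+a\tilde\eta_{n+1}\ge 1-|a|>0$ and $1-a^2\xi_{n+1}^2\ge 1-a^2>0$, so all denominators are bounded away from zero and every component is a real-analytic function of the ambient coordinates. A direct check confirms that these maps send $\sn$ into $\sn$ and are mutually inverse; hence $\mu$ and $\nu$ are $C^\infty$ diffeomorphisms of $\sn$.

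It remains to record the elementary facts about the weights and the $L^p$ bounds. The weights $w_\M(\tilde\eta)=(1+a\tilde\eta_{n+1})^{-(n+\lam)}$ and $w_\N(\xi)=(1-a^2)^{(n+\lam)/2}(1-a^2\xi_{n+1}^2)^{\lam/2}$ are real powers of the smooth, strictly positive functions $1+a\tilde\eta_{n+1}\in[1-|a|,1+|a|]$ and $1-a^2\xi_{n+1}^2\in[1-a^2,1]$; since $\lam\in\bbr$ and the bases stay in a compact subinterval of $(0,\infty)$, both weights lie in $C^\infty(\sn)$ and are bounded above and below by positive constants. For the $L^p$-boundedness with $1\le p<\infty$, I would read the Jacobians off Lemma \ref{lem31} rather than recompute them: formula (\ref{fff}) shows that the density of $\mu_\ast(d\eta)$ with respect to $d\tilde\eta$ equals $(1-a^2)^{n/2}(1+a\tilde\eta_{n+1})^{-n}$, which is bounded between positive constants, whence $\|f\circ\mu^{-1}\|_{L^p}\asymp\|f\|_{L^p}$; likewise (\ref{ggg}) furnishes the corresponding two-sided bound for $\Phi\mapsto\Phi\circ\nu$. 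Combining these with $\|w\cdot g\|_{L^p}\le\|w\|_\infty\|g\|_{L^p}$ and the trivial case $p=\infty$ gives boundedness of $\M_{a,\lam}$ and $\N_{a,\lam}$ on every $L^p(\sn)$, and by the closure-under-inversion remark of the first paragraph the same conclusions hold for their inverses. This completes the proof.
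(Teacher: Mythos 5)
Your proof is correct and takes essentially the same route as the paper: the paper's one-line argument rests on exactly the two observations you isolate, namely $2\ge 1+a\tilde\eta_{n+1}\ge 1-|a|>0$ and $1\ge 1-a^2\xi_{n+1}^2\ge 1-a^2>0$ (so the weights are smooth and two-sidedly bounded), together with Lemma \ref{lem31} for the $L^p$ statements. Your extra step of verifying via the ambient Cartesian formulas that $\mu$ and $\nu$ are genuine $C^\infty$ diffeomorphisms of $\sn$ across the poles, where the meridional parametrization (\ref{eta}) degenerates, is a correct and worthwhile filling-in of a detail the paper treats as obvious.
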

\begin{proof}  The statements become obvious if we observe that
\[ 2\ge 1+a\tilde\eta_{n+1} \ge 1-|a|>0, \qquad 1\ge 1-a^2\xi_{n+1}^2\ge 1-a^2 >0,\]
because $0\!\le\! a\!<\!1$. The result for $L^p$ functions follows from  Lemma \ref{lem31}.\\
\end{proof}

\begin{lemma} \label{lem32} Let $\C^\lam$ and $\C_a^\lam$ be the $\lam$-cosine transforms  (\ref{cos}) and (\ref{cosa}), respectively.    If  $f\in L^1(\sn)$, then
\be\label{lopa} \C_a^\lam f= \N_{a, \lam}\C^\lam \M_{a, \lam} f.\ee
Moreover, $\C_a^\lam$ preserves the space  $C^\infty(\sn)$ and  is a linear bounded operator on  $C(\sn)$ and $L^p(\sn)$, $1\le p\le \infty$.
\end{lemma}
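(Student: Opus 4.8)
The plan is to prove the factorization (\ref{lopa}) first, by a single change of variables in the integral (\ref{cosa}), and then to read off the three mapping properties of $\C_a^\lam$ directly from that factorization together with Proposition \ref{prop1} and the known properties of the unshifted transform $\C^\lam$. Fix $\xi\in\sn$ and $f\in L^1(\sn)$. For $-1<\lam$ the kernel $|\xi\cdot(\eta-ae_{n+1})|^\lam$ is locally integrable in $\eta$ (its singularity sits on the codimension-one slice $\gam_a(\xi)$), so by Fubini the integral (\ref{cosa}) converges for almost every $\xi$. I would then substitute $\eta=\mu^{-1}(\tilde\eta)$ and invoke the Jacobian identity (\ref{fff}) of Lemma \ref{lem31}, which replaces $f(\eta)\,d\eta$ by $(f\circ\mu^{-1})(\tilde\eta)\,(1-a^2)^{n/2}(1+a\tilde\eta_{n+1})^{-n}\,d\tilde\eta$.

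The crux is the algebraic identity for the kernel after this substitution. Writing $\mu^{-1}(\tilde\eta)=\eta(\psi,u)$ with $u=(v+a)/(1+av)$ and $v=\tilde\eta_{n+1}$, and $\xi=\xi(\vp,s)$ with $s=\xi_{n+1}$, and using $u-a=v(1-a^2)/(1+av)$ together with $1-u^2=(1-a^2)(1-v^2)/(1+av)^2$, I would compute
\[
\xi\cdot\bigl(\mu^{-1}(\tilde\eta)-ae_{n+1}\bigr)=\frac{\sqrt{1-a^2}\,\sqrt{1-a^2\xi_{n+1}^2}}{1+a\tilde\eta_{n+1}}\,\bigl(\nu(\xi)\cdot\tilde\eta\bigr),
\]
where $\nu$ is the map (\ref{mapnu}) with $t=s\sqrt{1-a^2}/\sqrt{1-a^2s^2}$. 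The verification reduces to matching the shifted linear form $\sqrt{1-u^2}\sqrt{1-s^2}\,(\vp\cdot\psi)+s(u-a)$ with the genuine inner product $\sqrt{1-t^2}\sqrt{1-v^2}\,(\vp\cdot\psi)+tv$ after using $1-t^2=(1-s^2)/(1-a^2s^2)$. This is exactly where the particular Möbius-type substitutions (\ref{form}) and (\ref{formnu}) are forced: they are precisely what makes the shifted form collapse into $\nu(\xi)\cdot\tilde\eta$ up to the displayed scalar.

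Raising this identity to the power $\lam$ and collecting the scalar factors, the powers of $(1+a\tilde\eta_{n+1})$ combine to $-(n+\lam)$ and the powers of $(1-a^2)$ to $(n+\lam)/2$. The weight $(f\circ\mu^{-1})(\tilde\eta)(1+a\tilde\eta_{n+1})^{-(n+\lam)}$ is exactly $(\M_{a,\lam}f)(\tilde\eta)$, the residual integral against $|\nu(\xi)\cdot\tilde\eta|^\lam$ is $(\C^\lam\M_{a,\lam}f)(\nu(\xi))$, and the remaining prefactor $(1-a^2)^{(n+\lam)/2}(1-a^2\xi_{n+1}^2)^{\lam/2}$ applied to the value at $\nu(\xi)$ is the action of $\N_{a,\lam}$; this yields (\ref{lopa}). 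For the mapping properties I would note that $\C^\lam$ with $-1<\lam<1$ is convolution on $\sn$ against the zonal kernel $|\cos\theta|^\lam$, which is integrable, so $\C^\lam$ is bounded on $C(\sn)$ and $L^p(\sn)$ and preserves $C^\infty(\sn)$ (standard; see the references cited after (\ref{siefu})). Since $\M_{a,\lam}$ and $\N_{a,\lam}$ share these three properties by Proposition \ref{prop1}, the composition $\C_a^\lam=\N_{a,\lam}\C^\lam\M_{a,\lam}$ inherits all of them.

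The main obstacle I expect is the kernel identity of the second paragraph: checking that $\mu$ and $\nu$ conspire so that $\xi\cdot(\mu^{-1}(\tilde\eta)-ae_{n+1})$ factors as a scalar times $\nu(\xi)\cdot\tilde\eta$, with the scalar depending on $\xi,\tilde\eta$ only through $\xi_{n+1},\tilde\eta_{n+1}$ in the exact way that reproduces the weights defining $\M_{a,\lam}$ and $\N_{a,\lam}$. A secondary technical point is justifying the change of variables and the use of Fubini for merely $L^1$ data when $-1<\lam<0$, where the kernel is unbounded; this I would handle by first taking $f\ge 0$ (Tonelli) and then splitting a general $f\in L^1(\sn)$ into its positive and negative parts.
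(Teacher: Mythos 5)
Your proposal is correct and takes essentially the same route as the paper: the change of variables via Lemma \ref{lem31} together with the kernel factorization $\xi\cdot(\eta-ae_{n+1})=\frac{\sqrt{1-a^2}\,\sqrt{1-a^2\xi_{n+1}^2}}{1+a\tilde\eta_{n+1}}\,(\tilde\xi\cdot\tilde\eta)$ is exactly the computation in the paper's proof, and you deduce the mapping properties, as the paper does, from Proposition \ref{prop1} and the known properties of $\C^\lam$ in \cite[Section 5.1]{Ru15}. Your closing Tonelli/positive-part remark merely makes explicit a justification the paper leaves implicit.
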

\begin{proof} We transform the left-hand side by making use of Lemma \ref{lem31}. Changing variables, for $\xi=\xi (\vp, s)$ and $\eta =\eta (\psi, u)$ we have
\bea
&&\xi\cdot (\eta -ae_{n+1})=\sqrt{1-s^2}\,\sqrt{1-u^2} \,(\vp\cdot \psi)+s(u-a)\nonumber\\
&&=\frac{\sqrt{1-s^2}\,\sqrt{1-a^2}\,\sqrt{1-v^2}}{1+av}\, (\vp\cdot \psi) +\frac{sv (1-a^2)}{1+av}\nonumber\\
&&=\frac{\sqrt{1-a^2}\,\sqrt{1-a^2s^2}}{1+av}\, \left [\frac {\sqrt{1-v^2}\,\sqrt{1-s^2}}{\sqrt{1-a^2s^2}}\,(\vp\cdot \psi) +
\frac{sv \sqrt{1-a^2}}{\sqrt{1-a^2s^2}}\right ]\nonumber\\
&&=\frac{\sqrt{1-a^2}\, \sqrt{1-a^2s^2}}{1+av}\, \left [\sqrt{1-v^2} \,\sqrt{1-\frac{s^2(1-a^2)}{1-a^2s^2}} \,(\vp\cdot \psi) +
v\frac{s\sqrt{1-a^2}}{\sqrt{1-a^2s^2}}\right ]\nonumber\\
&&=\frac{\sqrt{1-a^2}\,\sqrt{1-a^2s^2}}{1+av}\,(\tilde\xi \cdot \tilde\eta)= \frac{\sqrt{1-a^2}\,\sqrt{1-a^2\xi_{n+1}^2}}{1+a\tilde\eta_{n+1}}\,(\tilde\xi \cdot \tilde\eta).\nonumber\eea
Hence, by Lemma \ref{lem31},
\bea
&&\intl_{\sn} f(\eta) \,|\xi\cdot (\eta -ae_{n+1})|^\lam\, d\eta\nonumber\\
&&=(1-a^2)^{(n+\lam)/2}\, (1-a^2\xi_{n+1}^2)^{\lam/2}  \intl_{\sn} \frac{(f\circ \mu^{-1})(\tilde\eta)}
{(1+a\tilde\eta_{n+1})^{n+\lam}}\,|\tilde\xi \cdot \tilde\eta|^\lam \,  d\tilde\eta.\nonumber\eea
This gives (\ref{lopa}).  The last statement follows from the similar facts for $\C^\lam$ \cite[Section 5.1]{Ru15} in view of Proposition \ref{prop1}.
\end{proof}

Our next aim is to pass to the limit (\ref{lopa}) as $\lam \to -1$. This will give us the
 desired relation between the  slice transform $\F_a$ and the Funk transform $\F$.  We denote
\bea\label{ma}
\M_a f)(\tilde\eta)&=& \frac{(f\circ \mu^{-1})(\tilde\eta)}
{(1+a\tilde\eta_{n+1})^{n-1}}\equiv (\M_{a, \lam} f)(\tilde\eta)\big |_{\lam =-1}, \\
\label{na}(\N_a \Phi)(\xi)&=&\sig_{n-1}\, (1-a^2)^{(n-1)/2} (\Phi\circ \nu)(\xi)\\
&\equiv& \sig_{n-1}\, \sqrt {1-a^2\xi_{n+1}^2}\,  (\N_{a, \lam} \Phi)(\xi)\big |_{\lam =-1}.\nonumber\eea

\begin{theorem}\label {th1} If $f\in L^1(\sn)$, then
\be\label{fact}  \F_af =\N_a \F \M_a f.\ee
\end{theorem}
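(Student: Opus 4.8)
The plan is to deduce (\ref{fact}) by letting $\lam\to -1$ in the factorization $\C_a^\lam f=\N_{a,\lam}\C^\lam\M_{a,\lam}f$ of Lemma \ref{lem32}, taking advantage of the fact that the operators $\M_a$, $\N_a$ in (\ref{ma})--(\ref{na}) are, by design, the $\lam=-1$ specializations of $\M_{a,\lam}$, $\N_{a,\lam}$ (the factor $\sig_{n-1}\sqrt{1-a^2\xi_{n+1}^2}$ deliberately built into $\N_a$ being there to absorb the limiting constants). Since the pointwise limit relations in Lemmas \ref{passlim} and \ref{passlim2} are stated for continuous functions, I would first establish (\ref{fact}) for $f\in C(\sn)$ and then remove this restriction by a density argument.

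For continuous $f$ the left-hand side of (\ref{lopa}) is immediate from Lemma \ref{passlim2}: $\lim_{\lam\to -1}(\C_a^\lam f)(\xi)=d_n\,(1-a^2\xi_{n+1}^2)^{-1/2}(\F_a f)(\xi)$. On the right-hand side I would factor $\N_{a,\lam}$ into its explicit scalar $B_\lam(\xi)=(1-a^2)^{(n+\lam)/2}(1-a^2\xi_{n+1}^2)^{\lam/2}$ and the $\lam$-independent substitution $\Phi\mapsto\Phi\circ\nu$. The scalar $B_\lam$ converges uniformly on $\sn$ to $B_{-1}(\xi)=(1-a^2)^{(n-1)/2}(1-a^2\xi_{n+1}^2)^{-1/2}$, since $0\le a<1$ keeps $1-a^2\xi_{n+1}^2$ bounded away from $0$; this part is routine. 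Thus the whole question reduces to the limit of the inner transform $\C^\lam\M_{a,\lam}f$.

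This inner limit is the main obstacle, because $\C^\lam$ is here applied to the function $\M_{a,\lam}f$, which itself varies with $\lam$, so Lemma \ref{passlim} does not apply verbatim. I would first observe that $\M_{a,\lam}f\to\M_a f$ uniformly (the exponent $n+\lam\to n-1$ while $1+a\tilde\eta_{n+1}\in[1-|a|,2]$). Using the spherical-mean identity (\ref{clop10}) with $h(t)=|t|^\lam$, I would then rewrite $(\C^\lam\M_{a,\lam}f)(\tilde\xi)$ as a constant multiple of a one-dimensional Riesz potential $(I^{\lam+1}G_\lam)(0)$, where $G_\lam(t)=(M_{\tilde\xi}\M_{a,\lam}f)(t)(1-t^2)^{(n-2)/2}$ depends on $\lam$ through $\M_{a,\lam}f$. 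Since $G_\lam$ converges to $G_{-1}$, the generalized Riesz-potential limit of the Appendix (Lemma \ref{riesz-enLEM}, which precisely allows a parameter-dependent integrand) yields $(I^{\lam+1}G_\lam)(0)\to G_{-1}(0)=(\F\M_a f)(\tilde\xi)$; multiplying by the limit of the normalizing constants $\gam_{n,\lam}\,\gamma_1(\lam+1)\,\sig_{n-1}/\sig_n$, which tends to $c_n$ exactly as in Lemma \ref{passlim}, gives $\C^\lam\M_{a,\lam}f\to c_n\,\F\M_a f$. Combining this with the limit of $B_\lam$ and the left-hand side, and tracking the constants $c_n$, $d_n$, $\sig_{n-1}$, $\sig_n$ against the normalizations built into (\ref{ma})--(\ref{na}) (where the $\sqrt{1-a^2\xi_{n+1}^2}$ produced by Lemma \ref{passlim2} cancels against the one in $\N_a$), produces exactly $\F_a f=\N_a\F\M_a f$ for continuous $f$.

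Finally I would pass from $C(\sn)$ to $L^1(\sn)$. Both sides of (\ref{fact}) are linear and bounded on $L^1(\sn)$: the left side by the boundedness of $\F_a$ on $L^1(\sn)$ for $-1<a<1$ (Corollary \ref{cor1}), and the right side as a composition of the bounded operators $\M_a$, $\F\colon L^1(\sn)\to L^1(\sn)$, and $\N_a$, the boundedness of $\M_a$ and $\N_a$ following exactly as in Proposition \ref{prop1}. Since $C(\sn)$ is dense in $L^1(\sn)$ and the two bounded operators agree on it, they agree on all of $L^1(\sn)$, which gives (\ref{fact}) in the stated generality.
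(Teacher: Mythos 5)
Your argument is essentially the paper's own proof of Theorem \ref{th1}: restrict to $f\in C(\sn)$, let $\lam\to-1$ in the factorization (\ref{lopa}) of Lemma \ref{lem32}, treat the left-hand side by Lemma \ref{passlim2}, and reduce everything to the limit of $\C^\lam$ applied to the $\lam$-dependent function $\M_{a,\lam}f$. Where the paper at this point simply invokes its Appendix Lemma \ref{WWWLEM}, you re-derive that lemma inline: your rewriting of $(\C^\lam\M_{a,\lam}f)(\tilde\xi)$ via (\ref{clop10}) as a one-dimensional Riesz potential $(I^{\lam+1}G_\lam)(0)$, followed by an appeal to the parameter-dependent Lemma \ref{riesz-enLEM}, is precisely the proof of Lemma \ref{WWWLEM} given in the Appendix, and your constant bookkeeping is correct: $\gam_{n,\lam}\,\gamma_1(\lam+1)\,\sig_{n-1}/\sig_n\to\pi^{1/2}/\Gamma(n/2)=c_n$. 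One expository point: Lemma \ref{riesz-enLEM} requires the joint limit $\lim_{(\lam,t)\to(-1,0)}G_\lam(t)=G_{-1}(0)$, not merely $G_\lam\to G_{-1}$ for each fixed $t$; your uniform convergence of $\M_{a,\lam}f$ together with the continuity of $G_{-1}$ at $t=0$ does supply this (it is the analogue of the paper's hypothesis that $g_\lam$ be continuous on $[-1,0]\times\sn$), but it should be said.

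The final density step, however, contains a genuine circularity. You justify the $L^1$-boundedness of the left-hand side $\F_a$ by citing Corollary \ref{cor1}, but in the paper Corollary \ref{cor1} is \emph{deduced from} Theorem \ref{th1} through the factorization (\ref{fact}) and Proposition \ref{prop1}; it is not available while the theorem is being proved. You need to control $\F_a$ on $L^1(\sn)$ independently. Two repairs: (i) a direct estimate — integrating (\ref{fua}) over $\xi\in\sn$ and applying Fubini's theorem on the incidence set $\{(\xi,\eta):\,\xi\cdot(\eta-ae_{n+1})=0\}$ gives $\int_{\sn}(\F_a|f|)(\xi)\,d\xi=\int_{\sn}|f(\eta)|\,w_a(\eta)\,d\eta$ with $w_a$ bounded for $-1<a<1$, because $|\eta-ae_{n+1}|\ge 1-|a|>0$; with this bound in hand your approximation argument runs exactly as written; or (ii) a positivity argument — both $\F_a$ and $\N_a\F\M_a$ are positive integral operators, so the identity, once proved on $C(\sn)$, extends by monotone approximation to nonnegative $f\in L^1(\sn)$ and then to all of $L^1(\sn)$ by linearity, and the $L^1$-boundedness of $\F_a$ then drops out as a consequence (this is the order of logic the paper follows, its own justification being the terse remark that Proposition \ref{prop1} and the $L^1$-boundedness of the Funk transform reduce matters to $f\in C(\sn)$). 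As submitted, though, the citation of Corollary \ref{cor1} makes the last step circular.
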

\begin{proof}  In view of Proposition \ref{prop1} and the $L^1$-boundedness of the Funk transform, it suffices to  prove (\ref{fact}) for $f\in C(\sn)$.
We write (\ref{lopa}) as $ (\C_a^\lam f)(\xi) = h_\lam (\xi) (\C^\lam g_\lam)(\tilde\xi)$, where
\[h_\lam (\xi)\!=\!(1-a^2)^{(n+\lam)/2} (1-a^2\xi_{n+1}^2)^{\lam/2}, \qquad   g_\lam (\tilde\eta)=\frac{(f\circ \mu^{-1})(\tilde\eta)}
{(1+a\tilde\eta_{n+1})^{n+\lam}}. \]
Here $\tilde\eta$ and $\tilde\xi$ are defined by (\ref{map}) and (\ref{mapnu}), respectively. 
Note that $g_\lam (\tilde\eta)$ is a continuous function of $(\lam, \tilde\eta)$ on $ [-1,0] \times \sn$ (we recall that $1+a\tilde\eta_{n+1} >0$ because $0\le a< 1$). If $\lam \to -1$, then
\[  h_\lam (\xi) \to  \frac{(1\!-\!a^2)^{(n-1)/2}}{\sqrt{1\!-\!a^2\xi^2_{n+1}}}, \qquad g_\lam (\tilde\eta) \to \frac{(f\circ \mu^{-1})(\tilde\eta)}
{(1+a\tilde\eta_{n+1})^{n-1}}=(\M_a f)(\tilde\eta).\]
Hence, by Lemma \ref{WWWLEM} in Appendix,
\[
\lim\limits_{\lam \to -1} (\C^\lam g_\lam)(\tilde\xi)=c_n \,(\F \M_a f) (\tilde\xi), \qquad  c_n= \frac{\pi^{1/2}}{\Gamma (n/2)},\]
where $F$ is the Funk transform (\ref{fu}).
Further, by  Lemma \ref{passlim2},
\[
\lim\limits_{\lam \to -1} (\C_a^\lam f)(\xi)\!= \!\frac{d_n}{\sqrt {1\!-\!a^2\xi_{n+1}^2}}  \, (\F_a f)(\xi),\quad d_n\!= \!\frac{\pi}{\Gamma ((n\!+\!1)/2)}\,. \]
Finally, (\ref{lopa}) yields
\[(\F_a f)(\xi)=  \sig_{n-1}\, (1-a^2)^{(n-1)/2} (\F\M_a f)(\tilde \xi)= (\N_a \F\M_a f)(\xi).\]
\end{proof}

Because the Funk transform is well-investigated \cite{GGG2, H11, P1, Ru15}, Theorem \ref{th1} enables us to establish the corresponding properties of $\F_a$. In particular,  Proposition \ref{prop1} yields the following corollary.
\begin{corollary} \label{cor1} The slice transform $\F_a$ preserves the space  $C^\infty(\sn)$ and  is a linear bounded operator on  $C(\sn)$ and $L^p(\sn)$, $1\le p\le \infty$.
\end{corollary}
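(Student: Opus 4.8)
The plan is to obtain the corollary directly from the factorization $\F_a=\N_a\F\M_a$ furnished by Theorem \ref{th1}, by verifying that each of the three factors $\M_a$, $\F$, and $\N_a$ preserves $C^\infty(\sn)$ and is a linear bounded operator on $C(\sn)$ and on $L^p(\sn)$ for every $1\le p\le\infty$, and then composing. Since a composition of maps with these properties again has them, the claim for $\F_a$ will follow at once.

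First I would note that $\M_a$ is literally the specialization $\M_{a,\lam}\big|_{\lam=-1}$ of the operator in (\ref{ma}), so Proposition \ref{prop1} applies to it without change. The operator $\N_a$, on the other hand, is not itself a member of the family $\{\N_{a,\lam}\}$: by its definition it equals $m\cdot\N_{a,-1}$, where $m(\xi)=\sig_{n-1}\sqrt{1-a^2\xi_{n+1}^2}$. The point to record is that, because $0\le a<1$ and $|\xi_{n+1}|\le 1$, one has $\sig_{n-1}\sqrt{1-a^2}\le m(\xi)\le \sig_{n-1}$, so $m$ is smooth on $\sn$, bounded, and bounded away from zero. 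Hence multiplication by $m$ is a bounded (indeed invertible) operator on each of $C^\infty(\sn)$, $C(\sn)$, and $L^p(\sn)$, and the mapping properties of $\N_{a,-1}$ given by Proposition \ref{prop1} transfer verbatim to $\N_a$.

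For the middle factor I would invoke the classical mapping properties of the Funk transform (see \cite{GGG2, H11, P1, Ru15}): $\F$ preserves $C^\infty(\sn)$ and is bounded on $C(\sn)$ and on $L^p(\sn)$ for all $1\le p\le\infty$. With the three factors in hand, the composition $\F_a=\N_a\F\M_a$ inherits every one of these properties, which is precisely the assertion of the corollary.

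The only step needing genuine care is this last invocation. The text preceding Proposition~2.3 records explicitly only the $L^1\to L^1$ boundedness of $\F$, whereas the corollary requires the full scale $1\le p\le\infty$ together with the $C$- and $C^\infty$-statements. These are standard, but to be self-contained I would remark that $\F$ is an averaging operator over the great subspheres $\sn\cap\xi^\perp$, whence its $L^\infty$-boundedness and its continuity on $C(\sn)$ are immediate, the remaining $L^p$-bounds follow by interpolation with the $L^1$ estimate, and the preservation of $C^\infty(\sn)$ is the well-known smoothing/regularity property of the Funk transform cited above.
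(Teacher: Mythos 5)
Your proof is correct and follows essentially the same route as the paper, which derives the corollary precisely from the factorization $\F_a=\N_a\F\M_a$ of Theorem \ref{th1}, the mapping properties of $\M_{a,\lam}$ and $\N_{a,\lam}$ in Proposition \ref{prop1}, and the classical properties of the Funk transform. Your explicit observation that $\N_a$ is not itself $\N_{a,-1}$ but differs from it by the smooth multiplier $\sig_{n-1}\sqrt{1-a^2\xi_{n+1}^2}$, bounded above and bounded away from zero since $a^2<1$, merely fills in a detail the paper leaves implicit.
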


It is known \cite{H11, Ru15} that the Funk transform $\F$, acting  on $L^1(\sn)$ or $C(\sn)$, is injective on even functions. Odd functions form the kernel of $\F$. Because the operators  $\M_a$ and $\N_a$ in (\ref{fact}) are bijective,  it follows that $\F_a$ is injective on the class of functions $f$ satisfying
\be\label{fiut}
(\M_a f)(\tilde\eta)=(\M_a f)(-\tilde\eta) \ee
for all (or almost all) $\tilde\eta \in \sn$. Similarly, the functions $f$ for which
\be\label{fiut1}
(\M_a f)(\tilde\eta)=-(\M_a f)(-\tilde\eta) \ee
form the kernel of $F_a$.

Below we characterize (\ref{fiut}) and (\ref{fiut1}) in terms of  $f$ itself.

Let, as (\ref{eta}) and (\ref {map}), $\eta\equiv \eta (\psi,u)$, $\tilde\eta =\mu (\eta) \equiv  \tilde\eta (\psi, v)$, where
\[\eta (\psi,u)=\sqrt {1-u^2} \,\psi + u e_{n+1}, \qquad \tilde\eta (\psi, v)= \sqrt {1-v^2} \,\psi + v e_{n+1},\]
\be\label{formx}  u=\frac{v+a}{1+av}, \qquad v=\frac{u-a}{1-ua}.\ee
By (\ref{ma})
\bea
(\M_a f)(\tilde\eta)&=&\frac{1}{(1+av)^{n-1}}(f\circ \mu^{-1})(\sqrt {1-v^2} \,\psi + v e_{n+1}),\nonumber\\
(\M_a f)(-\tilde\eta)&=&\frac{1}{(1-av)^{n-1}}(f\circ \mu^{-1})(-\sqrt {1-v^2}\, \psi - v e_{n+1}). \nonumber\eea
Using (\ref{formx}), it is straightforward to show that (\ref{fiut}) and (\ref{fiut1}) can be  equivalently written  as
\be\label{fon}
f(\eta)=\rho (\eta_*)\, f(\eta_*) \quad (\text{\rm or} \quad  f(\eta_*)=\rho (\eta)\, f(\eta))\ee
and
\be\label{fonb}
f(\eta)=-\rho (\eta_*)\, f(\eta_*) \quad (\text{\rm or} \quad  f(\eta_*)=-\rho (\eta)\, f(\eta)),\ee
respectively.
Here $\rho (\eta)\equiv \rho(\psi,u)$, $\rho (\eta_*)\equiv \rho (\psi_*,u_*)$, and $\eta_*\equiv \eta_*(\psi_*,  u_*)$ have the following definition in spherical coordinates:
\[\rho (\eta) =\left(\frac{1+a^2-2ua} {1-a^2}\right )^{n-1}, \quad \rho (\eta_*) =\left(\frac{1+a^2-2u_*a}{1-a^2}\right )^{n-1},\]
 \[ \eta_*(\psi_*,  u_*)= \sqrt {1-u_*^2} \,\psi_* + u_* e_{n+1},\quad \psi_*=-\psi, \quad
 u_* = \frac{2a - u(1+a^2)}{1+a^2-2ua}.\]
One can show  (we leave this calculation to the reader) that the points $\eta$, $\eta_*$, and  $A=(0, \ldots, 0,a)$ are located on the same line
 and the corresponding reflection map $\sn \ni \eta \to \eta_*\in \sn$  is an involution.

 The weighted equality (\ref{fon}) can be regarded as a  certain symmetry condition which is necessary and sufficient for injectivity of $\F_a$. In the Funk transform case $a=0$, (\ref{fon}) boils down to the classical evenness condition $f(\eta) =f(-\eta)$.

The above  reasoning can be summarized as follows.

\begin{theorem}\label {theo} The slice transform $\F_a$, acting on $L^1(\sn)$ or  $C(\sn)$,  is injective if and only if its action is restricted to  functions $f$ satisfying (\ref{fon}). The functions $f$ satisfying (\ref{fonb}) form the kernel of $\F_a$.
\end{theorem}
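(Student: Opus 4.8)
The plan is to deduce everything from the factorization $\F_a = \N_a \F \M_a$ of Theorem \ref{th1}, in which $\M_a$ and $\N_a$ are linear bijections of $L^1(\sn)$ (and of $C(\sn)$) by Proposition \ref{prop1}, together with the classical fact that $\F$ is injective on even functions and annihilates exactly the odd ones. Since $\N_a$ is injective, I note that $\F_a f = 0$ holds iff $\F(\M_a f) = 0$, that is, iff $\M_a f$ is odd; hence the kernel of $\F_a$ is the $\M_a$-preimage of the odd functions, namely the solution set of (\ref{fiut1}). For injectivity I would argue that if $\F_a f_1 = \F_a f_2$ with $\M_a f_1,\M_a f_2$ both even, then injectivity of $\N_a$, then of $\F$ on even functions, and finally of $\M_a$ force $f_1 = f_2$. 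Thus $\F_a$ is injective precisely on the $\M_a$-preimage of the even functions, i.e. on the solution set of (\ref{fiut}); this is the exact meaning of the dichotomy, the domain splitting as the direct sum of the sets (\ref{fiut}) and (\ref{fiut1}), the latter being the kernel.

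The remaining, and only substantive, step is to rewrite the abstract conditions (\ref{fiut}) and (\ref{fiut1}) on $\M_a f$ as the pointwise relations (\ref{fon}) and (\ref{fonb}) on $f$ itself. Writing $\tilde\eta = \tilde\eta(\psi,v)$ and using $-\tilde\eta = \tilde\eta(-\psi,-v)$ together with the definition (\ref{ma}) of $\M_a$, I obtain
\[(\M_a f)(\tilde\eta) = \frac{f(\eta)}{(1+av)^{n-1}}, \qquad (\M_a f)(-\tilde\eta) = \frac{f(\eta_*)}{(1-av)^{n-1}},\]
where $\eta = \mu^{-1}(\tilde\eta) = \eta(\psi,u)$ and $\eta_* = \mu^{-1}(-\tilde\eta) = \eta(-\psi,u_*)$ with $u_* = (a-v)/(1-av)$. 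The key computation is to feed the Möbius substitution (\ref{formx}), $v = (u-a)/(1-ua)$, into this expression: it should reproduce the stated closed form $u_* = (2a - u(1+a^2))/(1 + a^2 - 2ua)$ and $\psi_* = -\psi$, thereby identifying $\eta_*$ as the reflection of $\eta$ about $A$ and confirming that $\eta \mapsto \eta_*$ is an involution.

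It then remains to match the weight. Setting $(\M_a f)(\tilde\eta) = \pm(\M_a f)(-\tilde\eta)$ yields $f(\eta) = \pm\left((1+av)/(1-av)\right)^{n-1} f(\eta_*)$, so I would compute $1 + av = (1-a^2)/(1-ua)$ and $1 - av = (1+a^2-2ua)/(1-ua)$, whence the weight factor collapses to $\left((1-a^2)/(1+a^2-2ua)\right)^{n-1}$. A short evaluation giving $1 + a^2 - 2u_* a = (1-a^2)^2/(1+a^2-2ua)$ then shows this factor equals exactly $\rho(\eta_*)$, producing (\ref{fon}) for the $+$ sign and (\ref{fonb}) for the $-$ sign; the alternative forms follow from the reciprocity $\rho(\eta)\,\rho(\eta_*) = 1$.

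I expect the \emph{main obstacle} to be bookkeeping rather than conceptual: correctly tracking how the antipodal map $\tilde\eta \mapsto -\tilde\eta$ transports through the meridional substitution $\mu^{-1}$ into the reflection $\eta \mapsto \eta_*$, and verifying that the two separate weight factors in the numerator and denominator combine into the single expression $\rho$. Once the algebraic identity $1 + a^2 - 2u_* a = (1-a^2)^2/(1+a^2-2ua)$ is established, both equivalences (\ref{fiut}) $\Leftrightarrow$ (\ref{fon}) and (\ref{fiut1}) $\Leftrightarrow$ (\ref{fonb}) follow at once, and combining them with the kernel and injectivity analysis of the first paragraph completes the proof.
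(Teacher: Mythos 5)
Your proposal is correct and takes essentially the same route as the paper: it derives the kernel and injectivity statements from the factorization $\F_a=\N_a\F\M_a$ of Theorem \ref{th1}, the bijectivity of $\M_a$ and $\N_a$ (Proposition \ref{prop1}), and the classical even/odd dichotomy for the Funk transform, then translates (\ref{fiut}) and (\ref{fiut1}) into (\ref{fon}) and (\ref{fonb}) via the same substitution (\ref{formx}). Your explicit algebra checks out --- in particular $1+av=(1-a^2)/(1-ua)$, $1-av=(1+a^2-2ua)/(1-ua)$, the key identity $1+a^2-2u_*a=(1-a^2)^2/(1+a^2-2ua)$, and the reciprocity $\rho(\eta)\,\rho(\eta_*)=1$ --- and in fact supplies the computation the paper labels ``straightforward'' and leaves to the reader.
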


\begin{theorem}\label {th2} An integrable  function $f$ satisfying the symmetry condition (\ref{fon}) can be recovered from the  slice transform $\F_af$ by the formula
\be\label{recov}
f= \M_a^{-1} \F^{-1}\N_a^{-1}  \F_af\ee
where $\F^{-1}$ is the inverse Funk transform,
\bea
 &&(\M_a^{-1} \tilde f)(\eta)\!\equiv (\M_a^{-1} \tilde f)(\psi,u)\nonumber\\
\label{kiu} &&\!=\left (\frac{1\!-\!a^2}{1\!-\!ua}\right )^{n-1} \!\tilde f
\left (\frac{\sqrt{1\!-\!a^2} \, \sqrt{1\!-\!u^2}\, \psi +(u\!-\!a) e_{n+1}}{1\!-\!ua}\right ), \quad\eea
\bea
&&(\N_a^{-1} \tilde \Phi)(\tilde\xi)\!\equiv (\N_a^{-1} \tilde \Phi)(\vp,t)\nonumber\\
\label{kiu1} &&\!=\frac{(1\!-\!a^2)^{(1-n)/2}}{\sig_{n-1}} \tilde \Phi
\left (\frac{\sqrt{1\!-\!a^2} \, \sqrt{1\!-\!t^2}\, \vp \!+\!t e_{n+1}}{\sqrt{1\!-\!a^2\! +\!a^2t^2}}\right ). \quad\eea
\end{theorem}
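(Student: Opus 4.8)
The plan is to invert the factorization $\F_a f = \N_a \F \M_a f$ furnished by Theorem \ref{th1}, peeling off the outer factors one at a time. The one genuinely nontrivial ingredient is the role of the symmetry condition (\ref{fon}). Recall from the discussion preceding Theorem \ref{theo} that (\ref{fon}) is equivalent to (\ref{fiut}), i.e.\ to the statement that $\M_a f$ is an \emph{even} function on $\sn$. This is exactly what is needed, because the Funk transform $\F$ annihilates odd functions and is injective on even ones; hence the inverse Funk transform $\F^{-1}$ (see \cite{H11, Ru15}) returns $g$ from $\F g$ only when $g$ is even.

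With this in hand the inversion is immediate. By Proposition \ref{prop1} (applied at $\lam=-1$, where $\M_a$ and $\N_a$ agree with $\M_{a,\lam}$ and $\N_{a,\lam}$ up to harmless positive weight factors) together with Lemma \ref{lem31}, the operators $\M_a$ and $\N_a$ are bijections on $L^1(\sn)$, so their inverses exist. Applying $\N_a^{-1}$ to Theorem \ref{th1} gives $\N_a^{-1}\F_a f = \F\M_a f$; applying $\F^{-1}$ and using that $\M_a f$ is even yields $\F^{-1}\N_a^{-1}\F_a f = \M_a f$; finally $\M_a^{-1}$ produces $f = \M_a^{-1}\F^{-1}\N_a^{-1}\F_a f$, which is precisely (\ref{recov}).

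It remains to record the explicit forms of the two inverse operators. For $\M_a^{-1}$ I would set $\tilde f = \M_a f$, so that by the definition (\ref{ma}) one has $(f\circ\mu^{-1})(\tilde\eta)=(1+a\tilde\eta_{n+1})^{n-1}\tilde f(\tilde\eta)$, and then evaluate at $\tilde\eta=\mu(\eta)$ with $\eta=\eta(\psi,u)$ and $v=(u-a)/(1-au)$ as in (\ref{formx}). A direct computation gives $1+a\tilde\eta_{n+1}=(1-a^2)/(1-au)$ and $1-v^2=(1-a^2)(1-u^2)/(1-au)^2$, from which the prefactor $\big((1-a^2)/(1-ua)\big)^{n-1}$ and the displayed argument of $\tilde f$ in (\ref{kiu}) follow at once. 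The formula (\ref{kiu1}) for $\N_a^{-1}$ is obtained the same way: inverting the relation (\ref{formnu}) yields $s=t/\sqrt{1-a^2+a^2t^2}$, whence $1-s^2=(1-a^2)(1-t^2)/(1-a^2+a^2t^2)$, and substituting into $\xi(\vp,s)=\sqrt{1-s^2}\,\vp+se_{n+1}$ reproduces the argument in (\ref{kiu1}); the scalar $(1-a^2)^{(1-n)/2}/\sig_{n-1}$ is simply the reciprocal of the constant appearing in the definition (\ref{na}) of $\N_a$.

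The main obstacle, such as it is, is conceptual rather than computational: one must not overlook that $\F^{-1}$ recovers only the even part of its argument, so the entire recovery hinges on verifying that the symmetry hypothesis (\ref{fon}) forces $\M_a f$ to be even. Once that equivalence (already established via (\ref{fiut})) is in place, everything else reduces to algebra on the one-dimensional changes of variable (\ref{form}) and (\ref{formnu}) and the bookkeeping of the multiplicative weights.
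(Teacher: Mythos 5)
Your proposal is correct and takes essentially the same route as the paper: the paper's own proof of Theorem \ref{th2} is exactly the ``routine calculation'' you carry out for (\ref{kiu}) and (\ref{kiu1}), while the structural inversion (\ref{recov}) rests, as in your argument, on the factorization of Theorem \ref{th1}, the bijectivity of $\M_a$ and $\N_a$, and the equivalence of (\ref{fon}) with evenness of $\M_a f$ already established before Theorem \ref{theo}. Your explicit identities $1+a\tilde\eta_{n+1}=(1-a^2)/(1-au)$, $1-v^2=(1-a^2)(1-u^2)/(1-au)^2$, and $s=t/\sqrt{1-a^2+a^2t^2}$ all check out against (\ref{formx}) and (\ref{formnu}).
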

\begin{proof}
The formula (\ref{kiu}) follows by routine calculation from (\ref{eta})-(\ref{rule}), (\ref{ma}). Similarly,
 (\ref{kiu1}) is a consequence of (\ref{mapnu})-(\ref{mapnu1}) and (\ref{na}).
\end{proof}

\begin{remark} \label{rara} The class of all functions $f$ that fall into the scope of Theorem \ref{th2} is non-empty and has the form $\M_a^{-1} (L^1_{even}(\sn))$, where the operator $\M_a^{-1}$ is defined by (\ref{kiu}). Similarly, $\M_a^{-1} (L^1_{odd}(\sn))$ is the kernel of $\F_a$ in $L^1(\sn)$.
\end{remark}

\begin{remark}  \label{rara1} Numerous  formulas for the inverse Funk transform $\F^{-1}$ in smooth and nonsmooth settings can be found in the literature; see, e.g., \cite[Section 5.1]{Ru15} and references therein.
 For example, the following statement  in terms of spherical means (\ref{sphm0}) is known (cf. \cite[Theorem 5.40]{Ru15}).
\begin{theorem}\label{invrhys} Let $f\in L^p_{even} (\sn)$, $1\le p<\infty$, $\Phi_\xi (s)=(M_{\xi} \,\F f)(\sqrt{1\!-\!s^2})$.
Then
\be\label{90ashel}
f(\xi) \!=  \! \lim\limits_{t\to 1}  \left (\frac {1}{2t}\,\frac {\partial}{\partial t}\right )^{n-1} \!\left [\frac{2}{(n-2)!}\intl_0^t
(t^2 \!- \!s^2)^{(n-3)/2} \,\Phi_\xi (s) \,s^{n-1}\,ds\right ].\qquad\qquad\ee
In particular, for $n$ odd,
\be\label{90ashele}
f(\xi) \!=  \! \lim\limits_{t\to 1} \frac{\pi^{1/2}}{\Gam (n/2)} \left (\frac {1}{2t}\,\frac {\partial}{\partial t}\right )^{(n-1)/2}[t^{n-2}\Phi_\xi (t)].\ee
The  limit in these formulas is understood in the $L^p$-norm. If $f\in C_{even}(\sn)$, it can be interpreted in the $\sup$-norm.
\end {theorem}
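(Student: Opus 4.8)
The plan is to reduce the inversion of the Funk transform to the inversion of a one–dimensional Abel type equation, exploiting the rotation invariance of $\F$ together with the averaging character of the spherical mean $M_\xi$.

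\textbf{Reduction to zonal profiles.} Fix $\xi\in\sn$ and let $H_\xi\subset O(n+1)$ be the subgroup fixing $\xi$. Denote by $f_\xi^{\#}=\int_{H_\xi}(f\circ g)\,dg$ the $H_\xi$-average of $f$; it is zonal about the axis $\xi$, and by (\ref{sphm0}) its value on the subsphere $\{\eta:\xi\cdot\eta=t\}$ equals $(M_\xi f)(t)$. The Funk transform commutes with rotations, $\F(f\circ g)=(\F f)\circ g$ for $g\in O(n+1)$, so averaging commutes with $\F$ and $\F(f_\xi^{\#})=(\F f)_\xi^{\#}$. Hence $(M_\xi\F f)(t)$ is the profile of $\F(f_\xi^{\#})$, which is again zonal about $\xi$. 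Thus the pair $\big((M_\xi f),(M_\xi\F f)\big)$ is related exactly as a zonal function and its Funk transform, a relation governed by (\ref{ierertw}) with the North Pole replaced by $\xi$ (legitimate by rotation invariance). This strips away the $n$-dimensional geometry and leaves a purely one-variable problem.

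\textbf{Forward relation and Abel inversion.} With the sine variable as in the statement, $\Phi_\xi(s)=(M_\xi\F f)(\sqrt{1-s^2})$ is exactly the zonal Funk profile on the left of (\ref{ierertw}); after the substitution $w=s^2$ that formula becomes a fractional integral of order $(n-1)/2$ applied to the cosine-variable profile $M_\xi f$. Under the same substitution the bracket in (\ref{90ashel}) is a second fractional integral of order $(n-1)/2$, so by the semigroup property its composition with the forward relation is a fractional integral of integer order $n-1$ in the variable $t^2$. Since $\tfrac{1}{2t}\tfrac{\partial}{\partial t}=\tfrac{d}{d(t^2)}$, the operator $\big(\tfrac{1}{2t}\tfrac{\partial}{\partial t}\big)^{n-1}$ inverts this integer order integration and returns the profile $M_\xi f$ up to a factor $1/t$ tending to $1$ at the pole. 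For $n$ odd the order $(n-1)/2$ is already an integer, the intermediate integration is unnecessary, and one differentiates $t^{n-2}\Phi_\xi(t)$ directly, producing the simpler (\ref{90ashele}); a short computation with $\sigma_{n-1}=2\pi^{n/2}/\Gamma(n/2)$ and $\sigma_{n-2}=2\pi^{(n-1)/2}/\Gamma((n-1)/2)$ shows that the constant $c$ in (\ref{ierertw}) collapses to exactly $\pi^{1/2}/\Gamma(n/2)$, matching (\ref{90ashele}).

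\textbf{Passage to the limit.} The previous step recovers the cosine-variable profile of $f_\xi^{\#}$ near the pole, and since $(M_\xi f)(1)=f(\xi)$, letting $t\to1$ yields $f(\xi)$. For $f\in C_{even}(\sn)$ all steps are pointwise and the convergence is uniform, justified as in the $\a\to0$ limit of Riesz potentials (Lemma \ref{riesz-enLEM00}). The main obstacle is the $L^p$ statement: the inversion contains a genuine differentiation and is unbounded, so one must realize it as a family of truncated operators $T_t$ $(t<1)$, establish a uniform bound $\sup_{t<1}\|T_t\|_{L^p\to L^p}<\infty$, and combine this equicontinuity with the convergence $T_tf\to f$ already known on the dense subspace $C_{even}(\sn)$ to conclude $L^p$ convergence for every $f\in L^p_{even}(\sn)$. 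Securing that uniform bound — an approximate-identity estimate for the truncated fractional integro-differentiation along the meridian — is the delicate analytic point; the rest is the above reduction and classical Abel inversion, together with routine bookkeeping of the constants $2/(n-2)!$ and $\pi^{1/2}/\Gamma(n/2)$ and of the changes of variable (from $\eta_{n+1}$ to the sine variable to $t^2$).
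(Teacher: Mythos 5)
You should note at the outset that the paper itself contains no proof of this theorem: it sits inside Remark \ref{rara1} and is quoted verbatim from \cite[Theorem 5.40]{Ru15}, so the only meaningful comparison is with the standard argument behind that citation. Your skeleton matches it: averaging over the stabilizer of $\xi$ to reduce to zonal profiles (legitimate, since $\F$ commutes with rotations and $M_\xi$ is exactly the $H_\xi$-average), transplanting the zonal formula (\ref{ierertw}) to the pole $\xi$, substituting $w=t^2$ so that both the forward relation and the bracket in (\ref{90ashel}) become Riemann--Liouville integrals of order $(n-1)/2$, composing to integer order $n-1$, and inverting with $\bigl(\frac{1}{2t}\frac{\partial}{\partial t}\bigr)^{n-1}=\bigl(\frac{d}{d(t^2)}\bigr)^{n-1}$. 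Your bookkeeping of constants is also correct: with $c=2\sig_{n-2}/\sig_{n-1}$ from (\ref{ierertw}) and the Gamma factor $\Gamma\bigl(\frac{n-1}{2}\bigr)$ from the RL normalization, the product collapses to $\pi^{-1/2}\Gamma(n/2)$, matching (\ref{90ashele}).

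The genuine gap is the $L^p$ statement, which is the substance of the theorem. You correctly identify that your plan requires a uniform bound $\sup_{t<1}\|T_t\|_{L^p\to L^p}<\infty$ for the truncated operators, but you leave it unproven, so as written the theorem is established only for continuous $f$. Moreover, the detour through a separate equicontinuity estimate is unnecessary, because the composition collapses for \emph{arbitrary} integrable $f$, not only smooth $f$: by Fubini, for every $f\in L^1(\sn)$ the bracket in (\ref{90ashel}) is exactly the $(n-1)$-fold Riemann--Liouville primitive, in the variable $w=t^2$, of $v\mapsto c\,(M_\xi f)(\sqrt{v}\,)/(2\sqrt{v}\,)$; the first $n-2$ derivatives of such a primitive are classical and the last one exists a.e.\ by Lebesgue differentiation, whence $T_tf(\xi)$ equals a constant multiple of $(M_\xi f)(t)/t$ for a.e.\ $(\xi,t)$. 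From this identification the uniform bound is immediate, $\|T_tf\|_{L^p}\le c\,\|f\|_{L^p}$, by the generalized Minkowski inequality (each $M_\xi$ is an average over a subsphere), and the convergence $T_tf\to f$ in $L^p$ (in $\sup$-norm for $f\in C_{even}$) is precisely the known approximate-identity property of the spherical means $(M_\xi f)(t)\to f(\xi)$ as $t\to 1$; see \cite[pp.\ 503--508]{Ru15}. Until you either supply this collapse argument or prove the postponed uniform bound directly, your proposal does not yet prove the $L^p$ case.
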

\end{remark}

\section{Spherical Slice Transforms via the Radon Transform}

In this section   the setting of the problem  is slightly changed and the tools are different.
Given  $-1< a \le 1$, let
 \be\label{sus} \sn_a=\{\eta \in \sn: \eta_{n+1}< a\}, \quad \sn_{-}\equiv  \sn_0=\{\eta \in \sn: \eta_{n+1}< 0\}.\ee
We consider a modified  spherical slice transform that takes a function $f$ on $\sn_a$
to a function $\S_a f$ on the set of all  spherical slices
\bea \gam_a (\xi)&=& \{\eta \in \sn_a: \xi \cdot (\eta -a e_{n+1})=0\} \nonumber\\
\label{sl} &=& \{\eta \in \sn_a: \xi \cdot \eta =a \xi_{n+1}\}, \eea
labeled by $\xi \in \sn_+$.
The value of $\S_a f$ at $\gam_a (\xi)$  is defined by the integral
\be\label{susmin} (\S_a f)(\xi)=\intl_{\gam_a (\xi)} f(\eta)\, d_{\gam_a}\eta, \ee
where $d_{\gam_a}\eta$ is the surface measure on $\gam_a (\xi)$. In the cases $a=0$ and $a=1$, we write $\S_a f$ as $\S_0 f$ and $\S_1 f$, respectively. The slices (\ref{sl}) are truncated versions of those in the previous section. They resemble multidimensional spherical ``arcs" that become complete spheres only if $a=1$. If $\xi \in \sn_+$, then
 \be\label{susgt}
(\S_a f)(\xi)= (\F_a \tilde f)(\xi),\ee
where $\tilde f (\eta)=f(\eta)$ if $\eta_{n+1}< a$ and  $\tilde f (\eta)=0$, otherwise.

\begin{proposition}\label {kshm}
If $-1<  a <1$,  then $\S_a$ is a linear bounded operator from  $L^p(\sn_a)$ to $L^p(\sn_+)$ for all  $1\le p\le \infty$.
\end{proposition}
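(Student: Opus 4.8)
The plan is to read off the statement directly from the factorization (\ref{susgt}) combined with the $L^p$-boundedness of the full slice transform $\F_a$ recorded in Corollary \ref{cor1}. The point is that $\S_a$ is the composition of three elementary maps: zero-extension of $f$ from $\sn_a$ to all of $\sn$, application of $\F_a$, and restriction of the result back to $\sn_+$. Since the middle map is already known to be bounded on $L^p(\sn)$ for $-1<a<1$, nothing substantial remains.

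First I would introduce the zero-extension operator $E\colon L^p(\sn_a)\to L^p(\sn)$ given by $(Ef)(\eta)=f(\eta)$ when $\eta_{n+1}<a$ and $(Ef)(\eta)=0$ otherwise, so that $Ef=\tilde f$ in the notation of (\ref{susgt}). Because $\sn_a$ is a measurable subset of $\sn$ and the complement contributes nothing to the norm, $E$ is a linear isometry: $\|Ef\|_{L^p(\sn)}=\|f\|_{L^p(\sn_a)}$ for every $1\le p\le\infty$. Next, the restriction operator $\mathrm{Res}\colon L^p(\sn)\to L^p(\sn_+)$, $g\mapsto g|_{\sn_+}$, is linear with $\|\mathrm{Res}\,g\|_{L^p(\sn_+)}\le\|g\|_{L^p(\sn)}$, hence $\|\mathrm{Res}\|\le 1$.

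By (\ref{susgt}), as elements of $L^p(\sn_+)$ we have $\S_a f=\mathrm{Res}\,(\F_a Ef)$. Since $-1<a<1$, Corollary \ref{cor1} guarantees that $\F_a$ is bounded on $L^p(\sn)$ for every $p\in[1,\infty]$. Composing the three bounded maps gives
\be
\|\S_a f\|_{L^p(\sn_+)}\le \|\F_a\|_{L^p(\sn)\to L^p(\sn)}\,\|f\|_{L^p(\sn_a)},
\ee
which is exactly the asserted boundedness (and shows $\S_a f$ is finite a.e.\ and measurable, being the restriction of an $L^p(\sn)$ function).

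I do not expect a genuine obstacle here: all the analytic content is hidden in Corollary \ref{cor1}, which in turn rests on Theorem \ref{th1} and the bijectivity of $\M_a,\N_a$. The only place the hypothesis $a\neq 1$ intervenes is precisely that corollary, whose proof uses the non-degeneracy estimates $1+a\tilde\eta_{n+1}\ge 1-|a|>0$ and $1-a^2\xi_{n+1}^2\ge 1-a^2>0$ that fail at $a=1$; this is why the proposition is restricted to $-1<a<1$. Everything else is the routine verification that extension is isometric and restriction is a contraction.
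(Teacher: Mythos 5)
Your proposal is correct and follows essentially the same route as the paper: the paper's proof also reads the statement off from the identity (\ref{susgt}) $\S_a f=\F_a\tilde f$ on $\sn_+$ together with the $L^p$-boundedness of $\F_a$ from Corollary \ref{cor1}, using that zero-extension is an isometry (the paper even writes $\|\S_a f\|_{L^p(\sn_+)}=\|\F_a\tilde f\|_{L^p(\sn)}$, exploiting the evenness $(\F_a\tilde f)(-\xi)=(\F_a\tilde f)(\xi)$, where your restriction-contraction estimate gives the same bound). Your added remarks on where $a\neq 1$ is used match the paper's Proposition \ref{prop1}.
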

\begin{proof}  By (\ref{susgt}) and Corollary \ref {cor1},
\[
||\S_a f||_{L^p(\sn_+)}=||\F_a \tilde f||_{L^p(\sn)}\le c\, ||\tilde f||_{L^p(\sn)}= c\, ||f||_{L^p(\sn_a)}.\]
\end{proof}

\subsection{Stereographic Projection for  Truncated Sphere}
We recall that the space $\rn$ is identified with the coordinate hyperplane $e_{n+1}^\perp$ in $\rnp1$. Consider the bijective map
\be\label{mapx} \rn \ni x \xrightarrow{\;\pi\;} \eta \in \sn_a,  \ee
where the point $\eta=\pi (x)$ is located on the line $\ell$ passing through  the point  $A=(0, \ldots, 0,a)$ and the point $X=x-e_{n+1}$ lies in the tangent plane to $\sn$ at the South Pole $p_S=(0, \ldots, 0, -1)$. To find $\eta=\pi (x)$ explicitly, we  make use of the parametric representation of $\ell$, so that
 \[\ell=\{y \in \rnp1: \;y=tA+(1-t)X; \quad t\in \bbr \},\] and find the value of $t$ for which $y\in\sn_a$. Let
\[
\eta =\psi \sin \om +e_{n+1}  \cos \,\om, \qquad \psi\in \snm1, \quad 0\le \om\le \pi.\]
Then, for $y =(y', y_{n+1})\in \rnp1$ we have
\[\ell=\{y: \;y' =(1-t)x, \quad y_{n+1}=ta+t-1;\quad  t\in \bbr \}.\]
The condition  $|y|=1$ with $y\in\sn_a$ yields the quadratic equation
\[ (1-t)^2|x|^2 + (ta+t-1)^2 =1, \qquad t<0.\]
The solution to this equation has the form
\[ t=\frac{|x|^2+a+1-D(x)}{|x|^2+(a+1)^2}, \]
where
\be\label {pmuv} D(x)=\sqrt{a+1}\,\sqrt{|x|^2 (1-a) +a+1}.\ee
This gives
\be\label {pq} \eta\equiv\pi(x)=P(x)\, x+Q(x)\, e_{n+1}\in\sn_a, \ee
where
\be\label {pqw} P(x)=\frac{a(a+1)+D(x)}{|x|^2 +(a+1)^2}, \qquad Q(x)=\frac{a|x|^2 - (a+1)D(x)}{|x|^2 +(a+1)^2}.\ee
In particular,
\bea\label{mytrin} &&\text {\rm for $a=0$:} \qquad D(x)=\sqrt{|x|^2 +1}, \qquad P(x)=\frac{1}{\sqrt{|x|^2 +1}};\qquad\\
\label{mytrin1} &&\text {\rm for $a=1$:} \qquad D(x)=2, \qquad P(x)=\frac{4}{\sqrt{|x|^2 +4}}.\qquad\eea

The inverse map
\be\label{mapxin} \sn_a \ni \eta  \xrightarrow{\;\pi^{-1}\;} x \in \rn,  \ee
has  much simpler analytic expression that follows from elementary geometrical consideration. Specifically, if $\eta=(\eta', \eta_{n+1})$, $\eta'=(\eta_1, \ldots, \eta_n)$, or $\eta= \psi \sin \om +e_{n+1}  \cos \,\om$, $\psi \in \snm1$,  $r=|x|$, then
\be\label {pq44} |x|=\frac{(a+1)\, \sqrt {1-\eta_{n+1}^2}}{a -\eta_{n+1}} =\frac{(a+1)\, \sin \om}{a -\cos\, \om},\ee
and therefore
\be\label{mapxinf} x=\pi^{-1} (\eta)=\frac{(a+1)\,\eta'}{a -\eta_{n+1}}.\ee
In particular, (\ref {pq}) and (\ref {pq44}) yield
\be\label{ntin}
P(\pi^{-1} (\eta))=\frac {a -\eta_{n+1}}{a+1}, \qquad Q(\pi^{-1} (\eta))=\eta_{n+1}.\ee

\begin{remark}\label{knaq}  In the cases $a=0$ and $a=1$, of slices through the origin and the North Pole $p_N$,  we have  the following simple formulas:

\vskip 0.2 truecm

\noindent {\it  For $a=0$}:
\be\label {pqer}
\eta =\frac{x-e_{n+1}}{\sqrt{|x|^2 +1}}= \frac{x-e_{n+1}}{|x-e_{n+1}|};  \qquad x\in \rn, \ee
\be\label {pqernj} x=\frac{\eta'}{|\eta_{n+1}|}, \qquad \eta \in \sn_{-}.\ee
\noindent {\it For $a=1$}:
\be\label {pqer1}
\eta =\frac{4x  + (|x|^2 -4) e_{n+1} }{|x|^2 +4},  \qquad x\in \rn; \ee
\be\label {pqergt1} x=\frac{2\eta'}{1-\eta_{n+1}}, \qquad \eta \in \sn\setminus p_N.\ee
\end{remark}

\begin{lemma}\label{Loi} Let $-1< a\le 1$,
\be\label {pq1tr}
w (\eta)=\frac{(a+1)^n (1-a \eta_{n+1})}{(a-\eta_{n+1})^{n+1}}, \qquad \eta \in \sn_a,\ee
so that
\be\label {psq1tr}
(w \circ\pi)(x)=\frac{D(x)}{(a+1)\, P^n (x)},\qquad  x\in \rn,\ee
where $D(x)$ and $P(x)$ are defined by (\ref{pmuv}) and (\ref{pqw}), respectively.
Then for any $g\in L^1(\rn)$ and $f\in L^1 (\sn_a)$,
\be\label {pq2}
\intl_{\rn} g(x)\, dx=\intl_{\sn_a} (g\circ \pi^{-1})(\eta)\, w (\eta)\, d\eta,\ee
\be\label {pq3}
\intl_{\sn_a} f(\eta)\, d\eta =\intl_{\rn} (f \circ \pi)(x)\, \frac{dx}{(w \circ\pi)(x)}.\ee
 \end{lemma}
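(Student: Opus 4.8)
The plan is to treat (\ref{pq2}) and (\ref{pq3}) as a single change-of-variables identity for the stereographic map $\pi$, and to exploit the rotational symmetry of $\pi$ about the $e_{n+1}$-axis to reduce the $n$-dimensional Jacobian to a one-dimensional one. First note that the two formulas are equivalent: under the substitution $f=(g\circ\pi^{-1})\,w$ (equivalently $g=(f\circ\pi)/(w\circ\pi)$), one has $(g\circ\pi^{-1})(\eta)\,w(\eta)=f(\eta)$ and $(f\circ\pi)/(w\circ\pi)=g$, so each of (\ref{pq2}), (\ref{pq3}) turns into the other. Hence it suffices to prove one of them, say (\ref{pq3}).

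Before the main computation I would dispose of the auxiliary identity (\ref{psq1tr}). Writing $\eta=\pi(x)$ so that $\eta_{n+1}=Q(x)$, relation (\ref{ntin}) gives $a-Q(x)=(a+1)P(x)$, hence $(a-Q(x))^{n+1}=(a+1)^{n+1}P^{n+1}(x)$. A short manipulation of (\ref{pqw}) and (\ref{pmuv}), using $D(x)^2=(a+1)[\,|x|^2(1-a)+a+1\,]$, yields $1-aQ(x)=D(x)P(x)$. Substituting both relations into the definition (\ref{pq1tr}) of $w$ collapses $(w\circ\pi)(x)$ to $\frac{D(x)}{(a+1)P^n(x)}$, which is (\ref{psq1tr}).

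For the measure identity itself, the key observation is that $P$, $Q$, $D$ depend on $x$ only through $|x|$, so $\pi$ commutes with the rotations fixing $e_{n+1}$: if $x=r\psi$ with $r=|x|$ and $\psi\in\snm1$, then $\pi(x)=\big(rP(r)\big)\psi+Q(r)e_{n+1}$, whose latitude is $t=\eta_{n+1}=Q(r)$ and whose azimuthal direction is again $\psi$. Thus I would write $\rn$ in polar coordinates, $dx=r^{n-1}\,dr\,d\psi$, and $\sn_a$ in the slice form $\eta=\sqrt{1-t^2}\,\psi+te_{n+1}$, $t\in(-1,a)$, for which restricting (\ref{cnhos}) to $\eta_{n+1}<a$ gives the decomposition $d\eta=(1-t^2)^{(n-2)/2}\,dt\,d\psi$. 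Because the $\snm1$-factor matches on both sides, the whole statement reduces to the one-variable substitution $r=r(t)$ determined by (\ref{pq44}), namely $r=\frac{(a+1)\sqrt{1-t^2}}{\,a-t\,}$, a monotone bijection of $(-1,a)$ onto $(0,\infty)$.

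It then remains to verify the one-dimensional Jacobian relation $r^{n-1}\,|dr/dt|=w(\eta)\,(1-t^2)^{(n-2)/2}$. Differentiating gives $dr/dt=(a+1)(1-at)\big/\big[(1-t^2)^{1/2}(a-t)^2\big]$, and inserting $r^{n-1}=(a+1)^{n-1}(1-t^2)^{(n-1)/2}(a-t)^{1-n}$ together with $w(\eta)=(a+1)^n(1-at)(a-t)^{-(n+1)}$ (the latter being (\ref{pq1tr}) with $\eta_{n+1}=t$) makes all powers of $(a+1)$, $(a-t)$ and $(1-at)$ cancel, leaving exactly $(1-t^2)^{(n-2)/2}$. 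This proves (\ref{pq3}), and hence (\ref{pq2}) by the equivalence noted above. There is no genuine conceptual obstacle; the only points needing care are the sign and orientation of the one-dimensional substitution (handled by taking $|dr/dt|$ and matching the endpoints $t\to-1$, $t\to a$ with $r\to0$, $r\to\infty$) and the bookkeeping in the exponent cancellations, which I would organize by collecting powers of $(a+1)$, $(a-t)$, $(1-t^2)$ and $(1-at)$ separately.
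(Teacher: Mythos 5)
Your proof is correct and follows essentially the same route as the paper's: both exploit rotational symmetry about the $e_{n+1}$-axis to reduce the identity, via polar coordinates on $\rn$ and the slice decomposition of $\sn_a$, to the one-dimensional substitution (\ref{pq44}) (you parametrize by $t=\eta_{n+1}$ where the paper uses the colatitude $\om$, and you prove (\ref{pq3}) first and deduce (\ref{pq2}) while the paper does the reverse --- cosmetic differences). Incidentally, your Jacobian $dr/dt=(a+1)(1-at)\big/\bigl[(1-t^2)^{1/2}(a-t)^2\bigr]$ is the correct one: the factor $(1-\cos\om)$ displayed in the paper's computation is a typo for $(1-a\cos\om)$, which matches your formula under $t=\cos\om$.
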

\begin{proof} We observe that (\ref{psq1tr}) follows from (\ref{pq1tr}) by virtue of (\ref{ntin}). Further, passing to polar coordinates and using (\ref{pq44}), we obtain
\bea
&&\intl_{\rn} g(x)\, dx=\intl_0^\infty r^{n-1}\, dr \intl_{\snm1} g(r\th) \, d\th \nonumber\\
&&=\intl_{\cos^{-1} a}^\pi \!\!\left (\frac{(a\!+\!1)\, \sin \om}{a \!-\!\cos\, \om} \right )^{n-1} \frac{(a\!+\!1)\,(1\!-\!\cos\, \om)}{(a \!-\!\cos\, \om)^2}\, d\om \nonumber\\
&& \times
\intl_{\snm1} \!\!g\left (\frac{(a\!+\!1)\, \sin \om}{a \!-\!\cos\, \om} \, \th\right ) d \th,\nonumber\eea
which is  the right-hand side of (\ref{pq2})  in spherical coordinates. The equality (\ref{pq3})  follows from (\ref{pq2}) if we set $f(\eta)=(g\circ \pi^{-1})(\eta)\, w (\eta)$.
\end{proof}

\begin{example}  In the simplest cases, $a=0$ and $a=1$, the formulas (\ref{pq2}) and  (\ref{pq3}) have the following form.

\vskip 0.2 truecm

\noindent {\it  For $a=0$}:
\be\label {pq20}
\intl_{\rn} g(x)\, dx=\intl_{\sn_-} g\left (\frac{\eta'}{|\eta_{n+1}|}\right )\, \frac{d\eta}{|\eta_{n+1}|^{n+1}},\ee
\be\label {pq30}
\intl_{\sn_{-}} f(\eta)\, d\eta =\intl_{\rn} f\left (\frac{x-e_{n+1}}{\sqrt{|x|^2 +1}}\right ) \frac{dx}{(|x|^2 +1)^{(n+1)/2}}.\ee

\noindent {\it  For $a=1$}:

\be\label {pq201}
\intl_{\rn} g(x)\, dx=2^n\intl_{\sn} g\left (\frac{2\eta'}{1-\eta_{n+1}}\right )\, \frac{d\eta}{(1-\eta_{n+1})^{n}},\ee
\be\label {pq301}
\intl_{\sn} f(\eta)\, d\eta =4^n\intl_{\rn} f\left (\frac{4x  + (|x|^2 -4) e_{n+1} }{|x|^2 +4}\right ) \frac{dx}{(|x|^2 +4)^{n}}.\ee

\end{example}

\subsection{Semyanistyi's Integrals, Cosine, and Radon Transforms} ${}$\\

{\bf 1.} The map (\ref{mapx}) extends to the corresponding relationship between affine hyperplanes in $\rn$ and spherical slices (\ref{sl}). Specifically, to every $(n-1)$-plane $\t$ in $\rn=e_{n+1}^\perp$ we associate a spherical slice in the $n$-dimensional plane
spanned by the shifted $(n-1)$-plane
$\t\! -\!e_{n+1}$ and the point  $A=(0, \ldots, 0,a)$. This relationship gives rise to the corresponding connection between the hyperplane Radon transform  of functions on $\rn$ and the spherical slice transform  (\ref{susmin}).

Before we describe  this connection analytically, it is instructive to recall some basic facts from related to the Radon transform and the corresponding analytic family of Semyanistyi's integrals; see, e.g., \cite{H11, Ru15}.

The  hyperplane Radon transform $R$  takes a sufficiently good continuous function $g$ on $\rn$ to integrals of $g$ over affine hyperplanes in $\rn$. Every such hyperplane  $\t$  can be parameterized as
\[\t \equiv \t (\th, t)=\{x\in \rn : x\cdot \th =t\}, \qquad  \th \in \snm1, \qquad t\in \bbr, \]
 so that $\t (\th, t)=\t(-\th, -t)$. In this notation,
\be\label{rad} (Rg) (\t)\equiv (Rg) (\theta, t)=\intl_{\theta^{\perp}} g(t\theta +
u) \,d_\theta u,\ee where $d_\theta u$ denotes the Euclidean measure on $\theta^{\perp}$.

If $C_c^\infty (\rn)$ is the space of compactly supported infinitely differentiable functions on $\rn$ and $C_c^\infty (\snm1 \times \bbr)$ is the similar space on $ \snm1 \times \bbr$, then  $R$ acts from $C_c^\infty (\rn)$ to $C_c^\infty (\snm1 \times \bbr)$. A similar statement holds for the spaces $C_c (\rn)$ and $C_c (\snm1 \times \bbr)$ of compactly supported continuous functions.

The integral (\ref{rad}) is well-defined for almost all $(\th, t)\in \snm1 \times \bbr$ whenever $g\in L^p (\rn)$, $1\le p <n/(n-1)$, and these bounds  are sharp. Moreover, if $\snm1 \times \bbr$ is equipped with the product measure $d_* \th dt$ then
\be\label{duas3} \intl_{\snm1 \times \bbr} \frac{(Rg)(\theta,
t)}{(1+t^2)^{n/2}}\,d_*\theta dt= \intl_{\bbr^n}
\frac{g(x)}{(1+|x|^2)^{1/2}}\,dx;
 \ee
see \cite[formula (4.4.4)]{Ru15}. It is also known  \cite{OS}   that  $R$ is a linear bounded operator from $L^p (\rn)$ to $L^q (\snm1 \times \bbr)$ if $1/q=n/p -n +1$, $1\le p <n/(n-1)$.   If $g(x)\equiv g_0 (|x|)$ is
 a  radial function, then $(Rg) (\theta, t)$ is an even function of only one variable $t$. If we denote this function by $G(t)$, then
\be\label{fouyg}
G(t)=\sigma_{n-2} \intl^\infty_{|t|}\! g_0 (r)
(r^2-t^2)^{(n-3)/2}\,r dr;\ee
cf. \cite[Lemma 4.17]{Ru15}.

 The Funk transform and the Radon transform express one through another by making use of the stereographic projection; see \cite[p. 317]{Ru15}. This connection allows us to reformulate  known properties of one operator in terms of another. In this section obtain the corresponding results for more general spherical slice transforms.

The  hyperplane Radon transform can be regarded as a member of the analytic family of the
{\it  Semyanistyi fractional integrals} depending on a complex parameter $\lam$ and defined by the formula
\be \label{sosa}  (R^\lam g)(\th, t) =\gam_{n,\lam}
 \intl_{\rn} g(x) |x \cdot \th -t|^{\lam} \,dx, \qquad Re \, \lam > -1, \ee
where $\gam_{n,\lam}$ is the constant (\ref{siefu}). If $g$ is good enough, then
\be \label{rrrr}
\lim\limits_{\lam \to -1} (R^\lam g)(\th, t) = d_n(Rg) (\theta, t), \quad  d_n\!=\! \frac{\pi}{\Gamma ((n\!+\!1)/2)}.\ee
This equality (in different notation)  is well known (see, e.g., \cite[Section 4.9]{Ru15}). It will be proved in Appendix (see Lemma \ref {WWWny}) for   more general integrals, when $g$ depends also on $\lam$.

{\bf 2.} We introduce  an analytic family of
 cosine transforms associated with slices in $\sn_a$ by the formula
\be \label{cosan} (\frC_a^\lam f)(\xi) =\gam_{n,\lam}
 \intl_{\sn_a} f(\eta)\, |\xi \cdot \eta -a\xi_{n+1}|^{\lam} \,d\eta, \quad \xi \in \sn_+, \ee
(cf. (\ref{cosa})). For $x\in \rn$ and $\xi \!\equiv  \! \xi(\th, s)\!=\!\sqrt {1-s^2}\,\th \!+\!se_{n+1}\! \in \!\sn_+\setminus p_N$, $\th \in \snm1$, $s\in [0,1)$, we
set
\bea \label{hyan} (\U_{a, \lam} f)(x)\!\!&=&\!\!\frac{(f \circ \pi)(x)}{(w \circ \pi)(x)}\, P^\lam (x), \\
\label{hyan2} (\V_{a, \lam} \Phi)(\xi)\!\!&\equiv& \!\!(\V_{a, \lam} \Phi)(\th, s)\!=\!(1\!-\!s^2)^{\lam/2} \Phi \left (\th, \frac{(a+1)s}{\sqrt {1-s^2}}\right ),\quad \eea
 where  $P(x)$ is defined by (\ref {pqw}), and $(w \circ\pi)(x)$ has the form (\ref{psq1tr}).

\begin{example}
In the cases $a=0$ and $a=1$, (\ref{hyan}) and (\ref{hyan2})  have the following form (cf. Remark \ref{knaq}).

\vskip 0.2truecm
\noindent {\it For $a=0$}:
\bea \label{hyan0} (\U_{0, \lam} f)(x)\!&=&\!\frac{1}{(|x|^2 +1)^{(\lam +n+1)/2}}\, f\left( \frac{x-e_{n+1}}{\sqrt{|x|^2 +1}} \right), \\
\label{hyan20} (\V_{0, \lam} \Phi)(\xi)\!&\equiv& \!(\V_{0, \lam} \Phi)(\th, s)\!=\!(1\!-\!s^2)^{\lam/2} \Phi \left (\th, \frac{s}{\sqrt {1-s^2}}\right ).\quad \eea

\noindent {\it For $a=1$}:
\bea \label{hyan1} (\U_{1, \lam} f)(x)\!&=&\!\left (\frac{4}{|x|^2 +4}\right )^{n+\lam}\, f\left( \frac{4x  + (|x|^2 -4) e_{n+1} }{|x|^2 +4} \right), \\
\label{hyan21} (\V_{1, \lam} \Phi)(\xi)\!&\equiv&\! (\V_{1, \lam} \Phi)(\th, s)\!=\!(1\!-\!s^2)^{\lam/2} \Phi \left (\th, \frac{2s}{\sqrt {1-s^2}}\right ).\quad \eea
\end{example}

Let $C_c (\sn_a)$ be the space of continuous functions on $\sn$ having compact support in $\sn_a $. Such functions vanish identically near the parallel $\eta_{n+1}=a$.  The corresponding space of infinitely differentiable functions  is denoted by $C_c^\infty (\sn_a)$.
The  notations $C_c (\sn_+ \setminus p_N)$  and  $C_c^\infty (\sn_+ \setminus p_N)$ will be used for the similar spaces of  functions  on $\sn_+ \setminus p_N$.

The definitions (\ref{hyan}) and (\ref{hyan2})
%in conjunction with Lemma \ref{lem31}
imply the following statements.

\begin{proposition}\label{prop1s} ${}$\\

\noindent {\rm (i)}
The operator  $\U_{a, \lam}$ acts bijectively from $C_c (\sn_a)$ {\rm(}from $C_c^\infty (\sn_a)${\rm)} to $C_c (\rn)$ {\rm(}to $C_c^\infty (\rn)${\rm)}. The inverse operator  $\U_{a, \lam}^{-1}$ acts similarly in the opposite direction.

\noindent {\rm (ii)}
The operator  $\V_{a, \lam}$ acts bijectively from $C_c (\snm1 \times \bbr)$ {\rm(}from $C_c^\infty (\snm1 \times \bbr)${\rm)}
to $C_c (\sn_+ \setminus p_N)$ {\rm(}to $C_c^\infty (\sn_+ \setminus p_N)${\rm)}. The inverse operator  $\V_{a, \lam}^{-1}$ acts similarly in the opposite direction.
\end{proposition}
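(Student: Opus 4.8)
The plan is to present each of $\U_{a,\lam}$ and $\V_{a,\lam}$ as the composition of a smooth change of variables with multiplication by a smooth, strictly positive, nowhere-vanishing weight. Any such composition is automatically a bijection that carries $C_c$ to $C_c$ and $C_c^\infty$ to $C_c^\infty$, with an inverse of the same structure; bijectivity, preservation of smoothness, and the description of the inverse then all follow at once. The only genuine content is to check that the weights are smooth and nonvanishing for the whole range $-1<a\le 1$, and that the change of variables matches the compact-support conditions.

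For (i), combining (\ref{hyan}) with (\ref{psq1tr}) I would first rewrite
$$(\U_{a,\lam}f)(x)=\frac{(a+1)\,P^{n+\lam}(x)}{D(x)}\,(f\circ\pi)(x),$$
so the weight is $\omega_\lam(x)=(a+1)P^{n+\lam}(x)/D(x)$. Since $a+1>0$ and $|x|^2(1-a)+a+1\ge a+1>0$ for $a\le 1$, the function $D(x)$ in (\ref{pmuv}) is smooth and strictly positive; as $D(x)\ge a+1$, the numerator of $P(x)$ in (\ref{pqw}) is at least $(a+1)^2>0$, whence $P(x)>0$ and $\omega_\lam$ is smooth and positive on all of $\rn$ (for every real $\lam$, writing $P^{n+\lam}=\exp((n+\lam)\log P)$). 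The map $\pi$ is a diffeomorphism of $\rn$ onto $\sn_a$, with $\pi^{-1}$ given by the manifestly smooth formula (\ref{mapxinf}); by (\ref{pq44}), $|x|\to\infty$ exactly as $\eta_{n+1}\to a$, so $f$ has compact support in $\sn_a$ (vanishes near the parallel $\eta_{n+1}=a$) if and only if $\U_{a,\lam}f$ has compact support in $\rn$. Thus $\U_{a,\lam}$ bijects $C_c(\sn_a)$ onto $C_c(\rn)$ and $C_c^\infty(\sn_a)$ onto $C_c^\infty(\rn)$; using (\ref{ntin}) its inverse is the same kind of operator,
$$(\U_{a,\lam}^{-1}g)(\eta)=w(\eta)\Big(\tfrac{a+1}{a-\eta_{n+1}}\Big)^{\lam}(g\circ\pi^{-1})(\eta).$$

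For (ii) the change of variables is the scalar substitution $t=(a+1)s/\sqrt{1-s^2}$ appearing in (\ref{hyan2}). This is a smooth, strictly increasing bijection of $[0,1)$ onto $[0,\infty)$, with smooth inverse $s=t/\sqrt{t^2+(a+1)^2}$ and $1-s^2=(a+1)^2/(t^2+(a+1)^2)$; the accompanying weight $(1-s^2)^{\lam/2}$ is smooth and positive on $[0,1)$. Here $s=1$ (the excluded pole $p_N$) corresponds to $t=\infty$, so support bounded away from $p_N$ matches compact support in $t$, under the usual parameterization of the hyperplane manifold by $(\th,t)$ with $(\th,t)\sim(-\th,-t)$. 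The inverse is again of the same type,
$$(\V_{a,\lam}^{-1}\Psi)(\th,t)=\Big(\tfrac{t^2+(a+1)^2}{(a+1)^2}\Big)^{\lam/2}\Psi\Big(\th,\tfrac{t}{\sqrt{t^2+(a+1)^2}}\Big),$$
which yields the claimed bijectivity on both $C_c$ and $C_c^\infty$.

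The main obstacle is not a hard estimate but the uniform verification, valid across the entire range $-1<a\le 1$, that the weights remain smooth and bounded away from $0$: this is precisely where the inequalities $a+1>0$ and $|x|^2(1-a)+a+1\ge a+1$ are essential, guaranteeing $D>0$, $P>0$, and the smoothness of the fractional powers $P^\lam$ and $(1-s^2)^{\lam/2}$. The secondary point requiring care is the support bookkeeping---that the excluded loci $\{\eta_{n+1}=a\}$ and $p_N$ are exactly the preimages of infinity---so that the compact-support condition transfers in both directions. Once these are in hand, everything else is a direct reading of (\ref{pqw}), (\ref{psq1tr}), (\ref{ntin}), (\ref{hyan}), (\ref{hyan2}), and Lemma \ref{Loi}.
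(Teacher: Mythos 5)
Your proposal is correct and follows exactly the route the paper intends: the paper offers no written proof (it states that the definitions (\ref{hyan}), (\ref{hyan2}) ``imply'' the proposition), and your decomposition of each operator as a nowhere-vanishing smooth weight times composition with a diffeomorphism --- with the positivity checks $D(x)\ge a+1>0$, $a(a+1)+D(x)\ge (a+1)^2>0$, and the support bookkeeping $|x|\to\infty \Leftrightarrow \eta_{n+1}\to a$, $t\to\infty \Leftrightarrow s\to 1$ --- is precisely the verification being left to the reader, consistent with the paper's own formulas (\ref{ntin}), (\ref{ytrz}), (\ref{ytrz1}). Your added remark that in (ii) one must work modulo the identification $(\th,t)\sim(-\th,-t)$ (only $t\ge 0$ is reached since $s\in[0,1)$) is a detail the paper glosses over, and handling it as you do is appropriate.
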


\begin{lemma}\label{kiaw} Let $f\in C_c (\sn_a)$, $-1< a\le 1$,
\[\xi=\sqrt {1-s^2}\,\th +se_{n+1} \in \sn_+ \setminus p_N, \qquad \th \in \snm1, \quad s\in [0,1).\]
Then
\be\label{law}
(\frC_a^\lam f)(\xi)\equiv  (\frC_a^\lam f)(\th, s) = (\V_{a, \lam} R^\lam \U_{a, \lam} f)(\th, s),\ee
\end{lemma}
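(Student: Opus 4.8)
The plan is to start from the left-hand side and push the spherical integral down to $\rn$ by the stereographic change of variables of Lemma \ref{Loi}, and then read off the three operators on the right. First I would apply the area formula (\ref{pq3}) to the function $\eta \mapsto f(\eta)\,|\xi \cdot \eta -a\xi_{n+1}|^\lam$. This is legitimate because $f\in C_c(\sn_a)$ and, for $Re\,\lam>-1$, the codimension-one singularity along the slice $\gam_a(\xi)$ is locally integrable, so $\frC_a^\lam f$ is an honest Lebesgue integral. The substitution $\eta=\pi(x)$ turns $(\frC_a^\lam f)(\xi)$ into
\[
\gam_{n,\lam}\intl_{\rn}(f\circ\pi)(x)\,\big|\xi\cdot\pi(x)-a\xi_{n+1}\big|^\lam\,\frac{dx}{(w\circ\pi)(x)}.
\]

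The heart of the matter is to rewrite the kernel $|\xi\cdot\pi(x)-a\xi_{n+1}|$ in the Radon form $|x\cdot\th-t|$. Writing $\pi(x)=P(x)\,x+Q(x)\,e_{n+1}$ as in (\ref{pq})--(\ref{pqw}) and $\xi=\sqrt{1-s^2}\,\th+s e_{n+1}$, the key identity is
\[
\pi(x)-a e_{n+1}=P(x)\,\big(x-(a+1)e_{n+1}\big).
\]
Conceptually this is just the collinearity of $A$, $\eta=\pi(x)$, and $X=x-e_{n+1}$ that is built into the projection (so $\eta-A$ is parallel to $X-A=x-(a+1)e_{n+1}$); algebraically it is the one-line check $Q(x)-a=-(a+1)P(x)$ read off from (\ref{pqw}). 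Dotting with $\xi$ and using $\xi\cdot e_{n+1}=s$ together with $\xi\cdot x=\sqrt{1-s^2}\,(x\cdot\th)$ (since $x\in e_{n+1}^\perp$), I obtain
\[
\xi\cdot\pi(x)-a\xi_{n+1}=P(x)\,\sqrt{1-s^2}\,\Big(x\cdot\th-\tfrac{(a+1)s}{\sqrt{1-s^2}}\Big).
\]
Setting $t=(a+1)s/\sqrt{1-s^2}$ and using $P(x)>0$, raising to the power $\lam$ separates cleanly into $P^\lam(x)\,(1-s^2)^{\lam/2}\,|x\cdot\th-t|^\lam$.

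It then remains only to collect factors. The constant $(1-s^2)^{\lam/2}$ leaves the integral, and the remaining expression
\[
\gam_{n,\lam}\intl_{\rn}\frac{(f\circ\pi)(x)}{(w\circ\pi)(x)}\,P^\lam(x)\,|x\cdot\th-t|^\lam\,dx
\]
is exactly $(R^\lam\U_{a,\lam}f)(\th,t)$ by the definitions (\ref{hyan}) of $\U_{a,\lam}$ and (\ref{sosa}) of $R^\lam$ — note the same constant $\gam_{n,\lam}$ occurs in (\ref{cosan}) and (\ref{sosa}), so nothing extra is generated. Finally, prepending the factor $(1-s^2)^{\lam/2}$ and evaluating the second argument at $t=(a+1)s/\sqrt{1-s^2}$ is precisely the action of $\V_{a,\lam}$ in (\ref{hyan2}), which yields $(\frC_a^\lam f)(\th,s)=(\V_{a,\lam}R^\lam\U_{a,\lam}f)(\th,s)$, i.e.\ (\ref{law}).

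I expect no genuine analytic obstacle here: all integrals are absolutely convergent for $Re\,\lam>-1$ and compactly supported $f$, so no limiting or regularization argument is needed. The only point requiring care is the bookkeeping of step two — verifying the identity $Q(x)-a=-(a+1)P(x)$ and tracking the powers of $P(x)$, $\sqrt{1-s^2}$, and the weight $(w\circ\pi)(x)$ so that they assemble into exactly $\U_{a,\lam}$, $R^\lam$, and $\V_{a,\lam}$ with no stray constants.
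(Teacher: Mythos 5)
Your proof is correct and is essentially the paper's own argument: both apply the change of variables (\ref{pq3}), factor the kernel via $\xi\cdot\pi(x)-a\xi_{n+1}=P(x)\sqrt{1-s^2}\,(x\cdot\th-t)$ with $t=(a+1)s/\sqrt{1-s^2}$ (your identity $Q(x)-a=-(a+1)P(x)$ is exactly the paper's computation of $t$ from (\ref{pqw})), and then assemble the factors into $\V_{a,\lam}R^\lam\U_{a,\lam}$. Your collinearity interpretation of that identity and the explicit remarks on absolute convergence and $P(x)>0$ are harmless refinements of the same computation.
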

\begin{proof} By (\ref{pq3}),
\[
(\frC_a^\lam f)(\xi)=\gam_{n,\lam} \intl_{\rn} (f \circ \pi)(x)\,   |\xi \cdot \pi (x) -a\xi_{n+1}|^{\lam} \,\frac{dx}{(w \circ \pi)(x)}.\]
Further, by (\ref {pq}),
\bea
 &&\xi \cdot \pi (x) -a\xi_{n+1}=  \xi \cdot (\pi (x) -ae_{n+1})\nonumber\\
&&=(\sqrt {1-s^2}\th +se_{n+1})\cdot  [P(x)\,x+(Q(x)-a)e_{n+1}]\nonumber\\
&&=P(x) \sqrt {1-s^2} \left [x \cdot \th +\frac{(Q(x)-a)s}{P(x)\,\sqrt {1-s^2}}\right ]=P(x)\,\sqrt {1-s^2}\,(x \cdot \th -t),\nonumber\eea
where
\be\label{y34trz}   t=\frac{(a-Q(x))s}{P(x)\,\sqrt {1-s^2}}=\frac{(a+1)s}{\sqrt {1-s^2}}.\ee
The last expression follows from (\ref {pqw}). Thus,
\bea (\frC_a^\lam f)(\xi)&=&\gam_{n,\lam} \,(1-s^2)^{\lam/2} \intl_{\rn} (f \circ \pi)(x)\,   |P(x)\,(x \cdot \th -t)|^{\lam} \,\frac{dx}{(w \circ \pi)(x)}\nonumber\\
&=& \gam_{n,\lam}  \,(1-s^2)^{\lam/2} \intl_{\rn} (\U_{a, \lam} f)(x)|x \cdot \th -t)|^{\lam} \,dx,\nonumber\eea
as desired; cf. (\ref{hyan2}).
\end{proof}

Our next aim is to pass to the limit in (\ref{law}) as $\lam \to -1$.  For $x\in \rn$ and $\xi\equiv \xi (\th, s)\in \sn_+$, we denote
\bea \label{hyanb} (\U_{a} f)(x)\!\!\!&=&\!\!\!\frac{(f \circ \pi)(x)}{(w \circ \pi)(x)\, P(x)}=\frac{(a+1) \,P^{n-1}(x)}{D(x)}\, (f \circ \pi)(x), \quad\\
\label{hyan2b} (\V_{a} \Phi)(\xi)\!\!\!&=&\!\!\!\sqrt{\frac{1\!-\!a^2s^2}{1\!-\!s^2}} \Phi \left (\th, \frac{(a+1)s}{\sqrt {1-s^2}}\right ), \eea
where  $P(x)$ and $D(x)$ are  defined by (\ref {pqw}) and (\ref{pmuv}), respectively, and $(w \circ\pi)(x)$ has the form (\ref{psq1tr}). In particular,

\vskip 0.2truecm
\noindent {\it for $a=0$}:
\bea \label{xhyan0} (\U_{0} f)(x)\!&=&\!\frac{1}{(|x|^2 +1)^{n/2}}\, f\left( \frac{x-e_{n+1}}{\sqrt{|x|^2 +1}} \right), \\
\label{xhyan20} (\V_{0} \Phi)(\xi)\!&=&  \sqrt{\frac{1}{1\!-\!s^2}} \Phi \left (\th, \frac{s}{\sqrt {1-s^2}}\right ); \eea

\noindent {\it for $a=1$}:
\bea \label{xhyan1} (\U_{1} f)(x)\!&=&\!\left (\frac{4}{|x|^2 +4}\right )^{n-1}\, f\left( \frac{4x  + (|x|^2 -4) e_{n+1} }{|x|^2 +4} \right), \\
\label{xhyan21} (\V_{1} \Phi)(\xi)\!&=&\! \Phi \left (\th, \frac{2s}{\sqrt {1-s^2}}\right ). \eea

The analytic expressions for the inverse operators can be easily obtained by making use of (\ref{mapxinf}), (\ref{ntin}), (\ref{pq1tr}), and (\ref{y34trz}). Specifically,

\be\label{ytrz}
(\U_{a}^{-1} \tilde f)(\eta)=\frac{(a+1)^{n-1} (1-a \eta_{n+1})}{(a-\eta_{n+1})^{n}}\, \tilde f \left ( \frac{(a+1)\,\eta'}{a -\eta_{n+1}}\right ), \ee
\be\label{ytrz1}
(\V_{a}^{-1} \tilde \Phi)(\theta, t)= \sqrt \frac{a+1}{t^2 (1-a)+a+1} \,\tilde \Phi \left ( \frac{(a+1)\,\theta +te_{n+1}}{\sqrt{t^2 +(a+1)^2}} \right ). \ee
In particular,

\vskip 0.2truecm
\noindent {\it for $a=0$}:
\bea \label{ffnw21} (\U_{0}^{-1} \tilde f)(\eta)&=&\frac{1}{|\eta_{n+1}|^{n}}\, \tilde f \left ( \frac{\eta'}{|\eta_{n+1}|}\right ), \\
\label{ffan21} (\V_{0}^{-1} \tilde \Phi)(\theta, t)&=& \frac{1}{\sqrt {t^2+1}} \,\tilde \Phi \left ( \frac{\theta +te_{n+1}}{\sqrt{t^2 +1}} \right ); \eea

\noindent {\it for $a=1$}:
\bea \label{ffn21b} (\U_{1}^{-1} \tilde f)(\eta)&=&\left (\frac{2}{1-\eta_{n+1}}\right )^{n-1}\, \tilde f \left ( \frac{2\eta'}{1 -\eta_{n+1}}\right ),  \\
\label{ffn21} (\V_{1}^{-1} \tilde \Phi)(\theta, t)&=& \tilde \Phi \left ( \frac{2\theta +te_{n+1}}{\sqrt{t^2 +4}} \right ).\eea

\begin{lemma} \label {jaau} If $f \in C_c(\sn_a)$, $-1< a\le 1$, then
\be\label{brv}
(\S_a f)(\xi) =   (\V_a R\,\U_{a} f)(\xi) \qquad \forall \,\xi \in \sn_+\setminus p_N.\ee
\end{lemma}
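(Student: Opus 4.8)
The plan is to deduce the limiting relation (\ref{brv}) from the $\lam$-cosine identity of Lemma \ref{kiaw} by letting $\lam \to -1$ in
\be
(\frC_a^\lam f)(\th, s) = (\V_{a, \lam} R^\lam \U_{a, \lam} f)(\th, s), \qquad \xi = \sqrt{1-s^2}\,\th + s e_{n+1} \in \sn_+\setminus p_N,
\ee
and identifying each side separately. Both sides carry the same meromorphic parameter, and the normalizing constant $\gam_{n,\lam}$ is already built into $\frC_a^\lam$ and $R^\lam$, so no extra bookkeeping of constants is needed; it suffices to compute the two one-sided limits and equate them.

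For the left-hand side I would extend $f$ by zero to all of $\sn$, writing $\tilde f$ for this extension. Since $f\in C_c(\sn_a)$ vanishes near the parallel $\eta_{n+1}=a$, the function $\tilde f$ is continuous on $\sn$, and by definition (\ref{cosan}) one has $\frC_a^\lam f = \C_a^\lam \tilde f$. Because $\xi\in\sn_+\setminus p_N$ forces $\xi_{n+1}=s\in[0,1)$ and hence $|a\xi_{n+1}|=|a|s<1$, Lemma \ref{passlim2} applies to $\tilde f$, and together with the relation (\ref{susgt}) it yields
\be
\lim\limits_{\lam \to -1}(\frC_a^\lam f)(\xi) = \frac{d_n}{\sqrt{1-a^2\xi_{n+1}^2}}\,(\F_a \tilde f)(\xi) = \frac{d_n}{\sqrt{1-a^2\xi_{n+1}^2}}\,(\S_a f)(\xi).
\ee

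For the right-hand side I would pass to the limit through the three operators in turn. Comparing (\ref{hyan}) with (\ref{hyanb}), we get $\U_{a,\lam}f\to\U_a f$ as $\lam\to -1$ since $P^\lam(x)\to P^{-1}(x)$, and $(\lam,x)\mapsto(\U_{a,\lam}f)(x)$ is continuous and supported in a fixed compact subset of $\rn$ because $f$ has compact support in $\sn_a$. Next, comparing (\ref{hyan2}) with (\ref{hyan2b}) and using $\xi_{n+1}=s$, one checks that $(\V_{a,\lam}\Phi)\big|_{\lam=-1}=(1-a^2\xi_{n+1}^2)^{-1/2}\,\V_a\Phi$. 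Combined with the limit relation for the Semyanistyi integral in the $\lam$-dependent setting, this gives
\be
\lim\limits_{\lam \to -1}(\V_{a, \lam} R^\lam \U_{a, \lam} f)(\xi) = \frac{d_n}{\sqrt{1-a^2\xi_{n+1}^2}}\,(\V_a R\,\U_a f)(\xi).
\ee
Equating the two displayed limits and cancelling the common positive factor $d_n(1-a^2\xi_{n+1}^2)^{-1/2}$ yields (\ref{brv}).

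I expect the main obstacle to be the passage to the limit inside the Semyanistyi integral $R^\lam$: because the amplitude $\U_{a,\lam}f$ itself varies with $\lam$, the classical fixed-integrand statement (\ref{rrrr}) does not directly apply, and one must instead invoke its generalization (Appendix Lemma \ref{WWWny}), which covers integrands depending on $\lam$. To activate that lemma I would verify the joint continuity of $(\lam,x)\mapsto(\U_{a,\lam}f)(x)$ together with its uniform compact support, so that $R^\lam \U_{a,\lam}f\to d_n\,R\,\U_a f$. Everything else is a routine matching of the explicit prefactors produced by $\V_{a,\lam}$ and $\U_{a,\lam}$ against those of $\V_a$ and $\U_a$.
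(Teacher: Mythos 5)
Your proposal is correct and takes essentially the same route as the paper: pass to the limit $\lambda\to-1$ in the factorization of Lemma \ref{kiaw}, identify the left-hand side via Lemma \ref{passlim2} (arriving at the paper's (\ref{lims})) and the right-hand side via the Appendix Lemma \ref{WWWny} for $\lambda$-dependent integrands, then match the prefactors $(1-s^2)^{\lambda/2}$ and $P^\lambda$ against $\V_a$ and $\U_a$. Your derivation of (\ref{lims}) through the zero extension $\tilde f$ and the relation (\ref{susgt}) is a minor, harmless streamlining of the paper's computation with spherical means (\ref{fuat}) and (\ref{sphm01}).
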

\begin{proof}  We first show that
\be\label{lims}
\lim\limits_{\lam \to -1} (\frC_a^\lam f)(\xi)\!=\!\frac{ d_n\, (\S_a f)(\xi)}{\sqrt{1- a^2\xi_{n+1}^2}},   \quad d_n\!=\! \frac{\pi}{\Gamma ((n\!+\!1)/2)}.
\ee
To this end, we make use of Lemma \ref{passlim2}, according to which
\[
\lim\limits_{\lam \to -1} (\C_a^\lam f)(\xi)=\frac{1}{\sig_n}\lim\limits_{\lam \to -1} (\frC_a^\lam f)(\xi)=c_n\, (\F_a f)(\xi) (1-a^2\xi_{n+1}^2)^{(n-2)/2}.\]
By (\ref {fuat}) and  (\ref{sphm01}),
\[
(\F_a f)(\xi)=(M_{\xi} f)(a\xi_{n+1})=\frac{(1-a^2\xi_{n+1}^2)^{(1-n)/2}}{\sig_{n-1}} \intl_{\xi \cdot \eta =a\xi_{n+1}} f(\eta)\, d\sig(\eta). \]
Hence
\[
\lim\limits_{\lam \to -1} (\frC_a^\lam f)(\xi)=\frac{c_n\, \sig_n}{\sig_{n-1}} (1-a^2\xi_{n+1}^2)^{-1/2} (\S_a f)(\xi),\]
which coincides with (\ref{lims}).

To find the limit of the right-hand side of (\ref{law}) as $\lam \to -1$, we make use of the properties of the
 Semyanistyi type integral in Lemma \ref{WWWny}. The function  $\rho (\a, x)=(\U_{a, \lam} f)(x)\big |_{\lam =\a -1}$ satisfies the conditions of this lemma, and we get
\be\label{ltms}
\lim\limits_{\lam \to -1} (R^\lam \U_{a, \lam} f)(\th, t)= d_n (R\,\U_{a, -1} f)(\th, t). \ee
Hence, for  the right-hand side of (\ref{law}) we have
\be\label{dltms}
\lim\limits_{\lam \to -1}  (\V_{a, \lam} R^\lam \U_{a, \lam} f)(\th, s)=\frac{d_n}{\sqrt{1\!-\!s^2}} (R\,\U_{a, -1} f)
 \left (\th, \frac{(a+1)s}{\sqrt {1-s^2}}\right ).\ee
Comparing (\ref{lims}) with (\ref{dltms}), we obtain the result.
\end{proof}

To extend the factorization (\ref{brv}) to Lebesgue integrable functions and  obtain the corresponding inversion formula,
 some auxiliary estimates  will be needed. We recall the notation from (\ref{pq3}):
\be\label {pqwzx} P(x)\!=\!\frac{a(a\!+\!1)\!+\!D(x)}{|x|^2\! +\!(a\!+\!1)^2}, \quad D(x)\!=\!\sqrt{a\!+\!1}\,\sqrt{|x|^2 (1\!-\!a)\! +\!a\!+\!1}.\ee

\begin{lemma} \label {jqw1} Let $1\le p<\infty$,  $-1< a\le 1$, $\lam \in \bbr$. Then
\bea\label{nr1v}
&&\intl_{\rn} |(\U_{a} f)(x)|^p \,P^\lam (x)\, dx\\
&&=(a\!+\!1)^{p-\lam +n(1-p)}\intl_{\sn_a}\frac{|f(\eta)|^p \,(a\!-\!\eta_{n+1})^{\lam -1+n(p-1)}} {(1\!-\!a\eta_{n+1})^{p-1}}\, d\eta\nonumber\eea
provided that the integral in either side exists in the Lebesgue sense.
\end{lemma}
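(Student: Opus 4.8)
The plan is to feed the explicit form of $\U_a$ into the left-hand side and then transport the resulting Euclidean integral to $\sn_a$ by means of the stereographic change of variables from Lemma \ref{Loi}. Using the second expression in (\ref{hyanb}), namely $(\U_a f)(x) = (a+1)\, P^{n-1}(x)\, D^{-1}(x)\, (f\circ\pi)(x)$, I would first raise to the power $p$, multiply by $P^\lam$, and factor out the constants to obtain
\[
\intl_{\rn} |(\U_a f)(x)|^p\, P^\lam(x)\, dx = (a+1)^p \intl_{\rn} \frac{P^{\,p(n-1)+\lam}(x)}{D^p(x)}\, |(f\circ\pi)(x)|^p\, dx .
\]
Every factor on the right is nonnegative, so there is no convergence issue to settle beforehand: the identity I am proving is an equality in $[0,\infty]$, and both sides are finite simultaneously, which is precisely the meaning of the ``exists in the Lebesgue sense'' clause.

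Next I would apply the change of variables (\ref{pq2}) with $g(x) = P^{\,p(n-1)+\lam}(x)\, D^{-p}(x)\, |(f\circ\pi)(x)|^p$. Since $\pi$ is a diffeomorphism of $\rn$ onto $\sn_a$, the formula (\ref{pq2}), stated in Lemma \ref{Loi} for $g\in L^1(\rn)$, extends verbatim to arbitrary nonnegative measurable $g$ by monotone approximation. As $(f\circ\pi)\circ\pi^{-1}=f$ on $\sn_a$, this turns the right-hand side into
\[
(a+1)^p \intl_{\sn_a} P^{\,p(n-1)+\lam}(\pi^{-1}(\eta))\, D^{-p}(\pi^{-1}(\eta))\, |f(\eta)|^p\, w(\eta)\, d\eta ,
\]
with $w$ the weight (\ref{pq1tr}).

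It remains to substitute the pull-backs of $P$ and $D$. From (\ref{ntin}) one has $P(\pi^{-1}(\eta)) = (a-\eta_{n+1})/(a+1)$, and the decisive computation is the clean form of $D(\pi^{-1}(\eta))$. I would obtain it by inserting $|x|^2 = (a+1)^2(1-\eta_{n+1}^2)/(a-\eta_{n+1})^2$, read off from (\ref{pq44}), into the definition (\ref{pmuv}) and using the algebraic identity
\[
(1-a^2)(1-\eta_{n+1}^2) + (a-\eta_{n+1})^2 = (1-a\eta_{n+1})^2 ,
\]
which yields $D(\pi^{-1}(\eta)) = (a+1)(1-a\eta_{n+1})/(a-\eta_{n+1})$; the positive root is correct since $a-\eta_{n+1}>0$ and $1-a\eta_{n+1}>0$ for $\eta\in\sn_a$. (The same value also follows immediately from (\ref{psq1tr}), which gives $D = (a+1)\,P^n\,(w\circ\pi)$, combined with (\ref{ntin}) and (\ref{pq1tr}).)

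Finally I would collect the three types of factors. Substituting $P(\pi^{-1}(\eta))$, $D(\pi^{-1}(\eta))$, and $w(\eta)=(a+1)^n(1-a\eta_{n+1})/(a-\eta_{n+1})^{n+1}$, and keeping track of exponents, the powers of $(a-\eta_{n+1})$ add up to $\lam-1+n(p-1)$, the powers of $(1-a\eta_{n+1})$ to $-(p-1)$, and the powers of $(a+1)$ — including the prefactor $(a+1)^p$ — to $p-\lam+n(1-p)$, reproducing exactly the right-hand side of (\ref{nr1v}). The argument is essentially bookkeeping; the only step requiring genuine care is the evaluation of $D\circ\pi^{-1}$, i.e.\ the algebraic identity above (equivalently the route through (\ref{psq1tr})), which is what converts the implicit dependence of $D$ on $|x|$ into an explicit function of $\eta_{n+1}$.
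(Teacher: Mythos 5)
Your proof is correct and follows essentially the same route as the paper: both substitute the explicit form of $\U_a$ from (\ref{hyanb}) into the left-hand side and transfer the integral to $\sn_a$ via (\ref{pq2}), then finish with (\ref{ntin}) and (\ref{pq1tr}) by bookkeeping of exponents. The only difference is that the paper uses the first expression in (\ref{hyanb}), $(\U_a f)(x)=(f\circ\pi)(x)/[(w\circ\pi)(x)\,P(x)]$, so that after the change of variables $w$ simply appears to the power $1-p$ and the explicit evaluation of $D\circ\pi^{-1}$ --- your algebraic identity $(1-a^2)(1-\eta_{n+1}^2)+(a-\eta_{n+1})^2=(1-a\eta_{n+1})^2$ --- is never needed; your route through the second expression (and your extra care with nonnegative measurable integrands, which the paper leaves implicit) is equivalent, just slightly longer.
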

\begin{proof} By (\ref{hyanb}) and (\ref{pq2}),
\[
l.h.s.=\!\intl_{\rn}  \left |\frac{(f \circ \pi)(x)}{(w \circ \pi)(x)\, P(x)}\right |^p P^\lam (x)\, dx=
\!\intl_{{\sn_a}} \!\frac{|f(\eta)|^p}{[P(\pi^{-1}(\eta)]^{p-\lam} \, \,[w (\eta)]^{p-1}}\, d\eta.\]
Hence, by (\ref{ntin}) and (\ref{pq1tr}),
\[
l.h.s.=\!\intl_{{\sn_a}} \!|f(\eta)|^p \left (\frac {a -\eta_{n+1}}{a+1}\right )^{\lam -p}  \left [\frac{(a+1)^n (1-a \eta_{n+1})}{(a-\eta_{n+1})^{n+1}}\right ]^{1-p} d\eta=r.h.s.\]
\end{proof}

The following corollary contains important particular cases of (\ref{nr1v})  that will be used in the following. Here we make use of the formulas (\ref{mytrin})   and  (\ref{mytrin1}). As above, we assume that the integral in either side of the corresponding equality exists in the Lebesgue sense.

\begin{corollary}\label{dlfluig} ${}$ \hfill

\vskip 0.2 truecm

\noindent $\bullet$ $\; \lam =0$, $1\le p<\infty$, {\rm  $-1< a\le 1$:}
\be\label{bcddrv}
\intl_{\rn} |(\U_{a} f)(x)|^p \, dx\!=\!(a\!+\!1)^{n+p(1-n)}\intl_{\sn_a}\frac{|f(\eta)|^p \, d\eta}{(a\!-\!\eta_{n+1})^{n+1-np}\, (1\!-\!a\eta_{n+1})^{p-1}};\ee

\noindent $\bullet$  $\; \lam =0$, $1\le p<\infty$,  {\rm $a= 1$:}
\be\label{bcddrv1}
\intl_{\rn} |(\U_{1} f)(x)|^p \, dx=2^{n+p(1-n)}\intl_{\sn}\frac{|f(\eta)|^p \, d\eta}{(1-\eta_{n+1})^{n-p(n-1)}};\ee

\noindent $\bullet$  $\; \lam =0$, $1\le p<\infty$, {\rm $a= 0$:}
\be\label{bcddrv2}
\intl_{\rn} |(\U_{0} f)(x)|^p \, dx=\intl_{\sn_-}\frac{|f(\eta)|^p \, d\eta}{|\eta_{n+1}|^{n+1-np}};\ee

\noindent $\bullet$  $\; \lam =1$, $p=1$, {\rm $-1< a\le 1$:}
\be\label{fytrz} \intl_{\rn} (\U_{a} f)(x)\, P(x)\, dx =\intl_{\sn_a} f (\eta)\, d\eta;\ee

\noindent $\bullet$  $\; \lam =1$, $p=1$, {\rm $ a=1$:}
\be\label{fhrz} \intl_{\rn} \frac{(\U_{1} f)(x)}{|x|^2 +4}  \,dx=\frac{1}{4}\intl_{\sn} f (\eta)\, d\eta;\ee

\noindent $\bullet$  $\; \lam =1/2$, $p=1$, {\rm $ a=1$:}
\be\label{fhrz} \intl_{\rn} \frac{(\U_{1} f)(x)}{\sqrt{|x|^2 +4}}  \,dx=\frac{1}{\sqrt {2}}\intl_{\sn} \frac{f (\eta)}{\sqrt {1-\eta_{n+1}}}\, d\eta.\ee
\end{corollary}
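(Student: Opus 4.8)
The plan is to read off every item directly from formula (\ref{nr1v}) of Lemma \ref{jqw1} by inserting the indicated values of $\lam$ and $a$, supplemented by the explicit expression for $P(x)$ coming from (\ref{pmuv})--(\ref{pqw}). No fresh analytic ingredient is required, so I expect the only delicate point to be the arithmetic of the two exponents in (\ref{nr1v}) together with the sign of $a-\eta_{n+1}$ on $\sn_-$.

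First I would treat the three cases with $\lam=0$. Putting $\lam=0$ in (\ref{nr1v}), the outer exponent becomes $p-\lam+n(1-p)=n+p(1-n)$ and the exponent of $(a-\eta_{n+1})$ becomes $\lam-1+n(p-1)=-(n+1-np)$; this is precisely (\ref{bcddrv}). Specializing to $a=1$ I would merge the two denominator factors, since $(a-\eta_{n+1})^{n+1-np}(1-a\eta_{n+1})^{p-1}$ collapses to $(1-\eta_{n+1})^{(n+1-np)+(p-1)}=(1-\eta_{n+1})^{n-p(n-1)}$, while $(a+1)^{n+p(1-n)}=2^{n+p(1-n)}$, yielding (\ref{bcddrv1}). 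Specializing instead to $a=0$ annihilates the prefactor, turns $(1-a\eta_{n+1})^{p-1}$ into $1$, and replaces $a-\eta_{n+1}$ by $|\eta_{n+1}|$ on $\sn_-$, giving (\ref{bcddrv2}).

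For the remaining three cases ($p=1$) I would first observe that the absolute-value signs in (\ref{nr1v}) may be removed. Indeed, for $p=1$ the proof of Lemma \ref{jqw1} rests on the signed measure identity (\ref{pq2})--(\ref{pq3}), which holds for any integrable integrand; hence the same computation delivers the signed version of (\ref{nr1v}) (equivalently, apply the lemma to the positive and negative parts of $f$ and use linearity of $\U_a$). Granting this, (\ref{fytrz}) is immediate: by (\ref{hyanb}) one has $(\U_a f)(x)\,P(x)=(f\circ\pi)(x)/(w\circ\pi)(x)$, so (\ref{pq3}) gives $\int_{\rn}(\U_a f)(x)\,P(x)\,dx=\int_{\sn_a}f(\eta)\,d\eta$.

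Finally, the two weighted identities at $a=1$ I would reduce to powers of $P$. From (\ref{pqw}) one computes $P(x)=4/(|x|^2+4)$ for $a=1$, whence $(|x|^2+4)^{-1}=P(x)/4$ and $(|x|^2+4)^{-1/2}=P^{1/2}(x)/2$. The first weighted identity then becomes $\tfrac14\int_{\rn}(\U_1 f)\,P\,dx=\tfrac14\int_{\sn}f\,d\eta$ by the case just established. For the second I would write $\int_{\rn}(\U_1 f)(|x|^2+4)^{-1/2}\,dx=\tfrac12\int_{\rn}(\U_1 f)\,P^{1/2}\,dx$ and apply the signed form of (\ref{nr1v}) with $\lam=\tfrac12$, $p=1$, $a=1$: the prefactor is $2^{1/2}$ and the exponent of $(1-\eta_{n+1})$ is $-\tfrac12$, producing $\tfrac1{\sqrt2}\int_{\sn}f(\eta)(1-\eta_{n+1})^{-1/2}\,d\eta$. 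Throughout, the substantive check is merely that the prefactor and the two $\eta_{n+1}$-exponents combine correctly; there is no genuine obstacle beyond this bookkeeping and the correct value of $P$ at $a=1$.
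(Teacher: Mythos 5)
Your proposal is correct and follows essentially the same route as the paper: the corollary is read off by specializing $\lam$, $p$, and $a$ in (\ref{nr1v}) of Lemma \ref{jqw1} (with the exponent arithmetic exactly as you carried it out), the absolute values in the $p=1$ cases being removable since $\U_a$ is composition with $\pi$ followed by multiplication by a positive weight, so the signed identities follow by linearity, just as you argue. One further point in your favor: your value $P(x)=4/(|x|^2+4)$ at $a=1$, computed directly from (\ref{pqw}), is the correct one — the paper's display (\ref{mytrin1}) carries a misprinted square root — and it is your value, not the misprint, that makes the two weighted identities at $a=1$ come out with the constants $\tfrac14$ and $\tfrac1{\sqrt2}$ as stated.
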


\begin{corollary}\label{dlfluig1}
If $-1< a< 1$, then there is a constant $c_a>0$ such that
\bea\label{brrv}
\intl_{\rn} |(\U_{a} f)(x)|^p \, dx &\le& c_a \intl_{\sn_a}\frac{|f(\eta)|^p \, d\eta}{(a-\eta_{n+1})^{n+1-np}}, \\
\label{fyqatrz} \intl_{\rn}\frac{ |(\U_{a} f)(x)|}{(1+|x|^2)^{1/2}}\, dx &\le&  c_a\intl_{\sn_a} |f (\eta)|\, d\eta.\eea
\end{corollary}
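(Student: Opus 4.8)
The plan is to read off both estimates from the exact identities already recorded in Corollary~\ref{dlfluig}; the only new input is that the strict bound $-1<a<1$ keeps the relevant weights away from $0$ and from $\infty$, which is exactly what fails at $a=1$.

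For (\ref{brrv}) I would start from the case $\lam=0$ of Lemma~\ref{jqw1}, i.e. the exact identity (\ref{bcddrv}). In the integral on its right-hand side every $\eta\in\sn_a$ satisfies $|\eta_{n+1}|\le 1$, and $|a|<1$, so the factor $1-a\eta_{n+1}$ ranges over the interval $[1-|a|,\,1+|a|]\subset(0,2)$. For $p\ge 1$ the exponent $p-1$ is nonnegative, hence $(1-a\eta_{n+1})^{p-1}\ge (1-|a|)^{p-1}>0$ and its reciprocal is bounded above by $(1-|a|)^{1-p}$. Absorbing this bound together with the finite positive factor $(a+1)^{n+p(1-n)}$ into a single constant $c_a>0$ turns (\ref{bcddrv}) into (\ref{brrv}).

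For (\ref{fyqatrz}) the aim is to reduce to the identity (\ref{fytrz}), namely $\intl_{\rn}(\U_{a}f)(x)\,P(x)\,dx=\intl_{\sn_a}f(\eta)\,d\eta$, applied to $|f|$; this is legitimate because the prefactor $(a+1)P^{n-1}(x)/D(x)$ in (\ref{hyanb}) is positive, so $|(\U_{a}f)(x)|=(\U_{a}|f|)(x)$. It therefore suffices to establish the pointwise bound $(1+|x|^2)^{-1/2}\le c_a\,P(x)$ for all $x\in\rn$, equivalently that $h_a(x):=P(x)(1+|x|^2)^{1/2}$ is bounded below by a positive constant. I would first note $P(x)>0$ everywhere: since $1-a>0$, the function $D(x)$ from (\ref{pmuv}) is increasing in $|x|$ with $D(x)\ge D(0)=a+1$, whence $a(a+1)+D(x)\ge (a+1)^2>0$ in the numerator of (\ref{pqw}). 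Thus $h_a$ is continuous and strictly positive on $\rn$, with $h_a(0)=1$ and, from the asymptotics $D(x)\sim\sqrt{1-a^2}\,|x|$ and $P(x)\sim\sqrt{1-a^2}/|x|$ as $|x|\to\infty$, the limit $h_a(x)\to\sqrt{1-a^2}>0$. A continuous positive function with a positive limit at infinity attains a positive minimum $m_a>0$ (choose $R$ so large that $h_a>\tfrac12\sqrt{1-a^2}$ for $|x|>R$, then use compactness of $\{|x|\le R\}$); setting $c_a=1/m_a$ yields the pointwise bound, and combining it with (\ref{fytrz}) applied to $|f|$ gives (\ref{fyqatrz}). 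Taking the larger of the two constants proves the corollary.

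The only real obstacle is this pointwise estimate, and it is precisely where $a<1$ is indispensable: at $a=1$ one has $\sqrt{1-a^2}=0$, so $h_a(x)\to 0$ as $|x|\to\infty$ and no bound $(1+|x|^2)^{-1/2}\le c_a P(x)$ can hold. The same strict inequality keeps $1-a\eta_{n+1}$ bounded away from $0$ on $\sn_a$ in (\ref{brrv}), since for $a=1$ this factor vanishes as $\eta_{n+1}\to 1$ and $\sn_1$ reaches up to $p_N$. This is the exact point at which the hypothesis sharpens from $-1<a\le 1$ to $-1<a<1$.
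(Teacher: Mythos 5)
Your proof is correct and takes essentially the same route as the paper, whose own proof is exactly this two-step reduction: (\ref{brrv}) from the identity (\ref{bcddrv}) using that $1-a\eta_{n+1}$ stays separated from zero when $|a|<1$, and (\ref{fyqatrz}) from the identity (\ref{fytrz}) using that $P(x)$ behaves like $(1+|x|^2)^{-1/2}$. You simply make explicit what the paper leaves as a remark, namely the positivity of $P$, the asymptotics $D(x)\sim\sqrt{1-a^2}\,|x|$ and $P(x)\sim\sqrt{1-a^2}/|x|$, and the compactness argument giving the pointwise bound $(1+|x|^2)^{-1/2}\le c_a\,P(x)$.
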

\begin{proof} These  inequalities follow from (\ref{bcddrv}) and (\ref{fytrz}). Here we keep in mind that  $P(x)$ essentially behaves like $ (1+|x|^2)^{-1/2}$ and $1\!-\!a\eta_{n+1}$ is separated from zero,  because $0\le a< 1$.
\end{proof}

{\bf 3.} Now we are ready to prove main theorems of this section.

\begin{theorem} \label {jaau1} If $-1< a < 1$ and $f \in L^1(\sn_a)$,  then
\be\label{brv}
(\S_a f)(\xi) =   (\V_a R\,\U_{a} f)(\xi)\quad \text{for almost all}\quad \xi\in\sn_+.\ee
\end{theorem}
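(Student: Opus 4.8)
The plan is to extend the factorization $(\S_a f)(\xi) = (\V_a R\,\U_{a} f)(\xi)$ from the dense subspace $C_c(\sn_a)$, where it was established in Lemma \ref{jaau}, to all of $L^1(\sn_a)$ by a standard density-plus-continuity argument. The key point is that both sides of (\ref{brv}) are, by the preceding propositions and corollaries, bounded operators between suitable $L^1$-type spaces, so an identity valid on a dense subspace persists on the closure.

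First I would verify the continuity of each factor in the composition $\V_a R\,\U_{a}$ on the relevant spaces. For $\U_a$, Corollary \ref{dlfluig1} is exactly the tool: inequality (\ref{fyqatrz}) shows that $\U_a$ maps $L^1(\sn_a)$ boundedly into the weighted space $L^1(\rn, (1+|x|^2)^{-1/2}dx)$, since $\int_{\rn} |(\U_a f)(x)|(1+|x|^2)^{-1/2}\,dx \le c_a \|f\|_{L^1(\sn_a)}$. Next, the Radon transform $R$ is controlled on precisely this weighted space by the duality relation (\ref{duas3}): taking $g = |\U_a f|$ there gives $\int_{\snm1\times\bbr} |(R\,\U_a f)(\theta,t)|(1+t^2)^{-n/2}\,d_*\theta\,dt \le \int_{\rn}|(\U_a f)(x)|(1+|x|^2)^{-1/2}\,dx$, so $R\,\U_a f$ is defined for almost every $(\theta,t)$ and lies in the weighted space on $\snm1\times\bbr$. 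Finally $\V_a$, defined in (\ref{hyan2b}), is a pointwise multiplication combined with the change of variable $t = (a+1)s/\sqrt{1-s^2}$, whose Jacobian together with the weight $\sqrt{(1-a^2s^2)/(1-s^2)}$ converts the measure $(1+t^2)^{-n/2}d_*\theta\,dt$ on $\snm1\times\bbr$ into a finite measure on $\sn_+$; since $-1<a<1$ all the relevant factors stay bounded away from $0$ and $\infty$ off the pole $p_N$. Consequently $f \mapsto \V_a R\,\U_a f$ is a bounded map from $L^1(\sn_a)$ into $L^1(\sn_+)$ (or at least into an $L^1$ space with an equivalent finite weight), and by Proposition \ref{kshm} the left-hand side $\S_a$ enjoys the same $L^1(\sn_a)\to L^1(\sn_+)$ boundedness.

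With both sides realized as bounded operators $L^1(\sn_a)\to L^1(\sn_+)$ agreeing on the dense subspace $C_c(\sn_a)$, the identity extends to all of $L^1(\sn_a)$, and equality then holds for almost every $\xi\in\sn_+$. I would invoke the density of $C_c(\sn_a)$ in $L^1(\sn_a)$: given $f\in L^1(\sn_a)$, choose $f_k\in C_c(\sn_a)$ with $f_k\to f$ in $L^1(\sn_a)$; Lemma \ref{jaau} gives $\S_a f_k = \V_a R\,\U_a f_k$ everywhere on $\sn_+\setminus p_N$, and passing to the limit in $L^1(\sn_+)$ on both sides (each side converges by the boundedness just established) yields (\ref{brv}) almost everywhere.

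The main obstacle I anticipate is the careful handling of the change of variable inside $\V_a$ and the attendant weights near the pole $p_N$ and near the boundary parallel $\eta_{n+1}=a$. One must confirm that the transition from the weight $(1+t^2)^{-n/2}$ on the line to the surface measure on $\sn_+$ produces an \emph{integrable} (finite) weight, so that $L^1$-convergence genuinely transfers; this is where the hypothesis $a<1$ is essential, since it keeps $1-a\eta_{n+1}$ and the denominators in (\ref{hyanb})--(\ref{hyan2b}) bounded, exactly as exploited in the proof of Corollary \ref{dlfluig1}. Once these weight bounds are in hand the density argument is routine, so the real content is the chain of operator estimates (\ref{fyqatrz}), (\ref{duas3}), and the boundedness of $\V_a$, assembled into a single composite bound.
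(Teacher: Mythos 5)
Your proof is correct and is essentially the paper's own argument: the same chain of estimates (\ref{fyqatrz}) for $\U_a$, the duality identity (\ref{duas3}) for $R$, a weight bound for $\V_a$, and extension by density from $C_c(\sn_a)$ using Lemma \ref{jaau} together with Proposition \ref{kshm}. The only (harmless) deviation is that you assert a global $L^1(\sn_a)\to L^1(\sn_+)$ bound for $\V_a R\,\U_a$, whereas the paper proves the estimate on the truncated caps $\{\xi\in\sn_+:\ \xi_{n+1}<1-\e\}$ for each $\e>0$ and then lets $\e$ vary; your global version is in fact true for $-1<a<1$, since under $t=(a+1)s/\sqrt{1-s^2}$ the surface measure on $\sn_+$ induces a weight on the $t$-line decaying like $t^{-n}$, hence dominated by $(1+t^2)^{-n/2}$ --- but note that your stated justification (``factors bounded away from $0$ and $\infty$ off the pole'') only covers the truncated region, so the near-pole decay check you yourself flag as the main obstacle is precisely the computation that must be carried out (or avoided, as the paper does, by localizing).
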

\begin{proof}  Fix any  $\e\in (0, 1)$ and denote  $\bbs^{n, \e}_+=\{\xi \in\sn_+: \, 0\le \xi_{n+1}< 1-\e\}$.   Let us show  that  $\S_a$ and  $\V_a R\,\U_{a}$ are linear bounded operators from   $L^1(\sn_a)$ to $L^1(\bbs^{n, \e}_+)$ . The estimate
\be\label{bfeggrv}
||\S_a f||_{L^1(\bbs^{n, \e}_+)} \le  c\, ||f||_{L^1(\sn_a)}.\ee
 holds by Proposition \ref{kshm}. Further, we observe that there exist positive constants  $c_\e$ and $a_\e$, depending only on $\e$ and $a$, such that
\be\label{dwss}
I_\e \equiv\intl_{\bbs^{n, \e}_+} |(\V_a \Phi)(\xi)|\, d\xi \le c_\e \intl_{0}^{a_\e} dt \intl_{\snm1} |\Phi (\th, t)|\, d\th\ee
for any function $\Phi(\th, t)$ on $\snm1 \times \bbr$ whenever the right-hand side of this inequality is finite. Indeed, by (\ref{cnhos}) and
 (\ref{hyan2b}),
\bea
I_\e&=&\intl_{0}^{1-\e} (1\!-\!s^2)^{(n-2)/2}\, ds\intl_{\snm1}|(\V_a \Phi)(\sqrt{1\!-\!s^2} \, \th +se_{n+1})|\, d\th\nonumber\\
&=&\intl_{0}^{1-\e} (1\!-\!s^2)^{(n-2)/2}\,\sqrt{\frac{1\!-\!a^2s^2}{1\!-\!s^2}} \, ds\intl_{\snm1}\left |\Phi \left (\th, \frac{(a+1)s}{\sqrt {1-s^2}}\right )\right |\, d\th.\nonumber\eea
Changing variable $t=(a+1)s/\sqrt {1-s^2}$, so that $ s=t/\sqrt {t^2 +(a+1)^2}$, and setting
\[
a_\e= \frac{(a+1) (1-\e)}{\sqrt{\e}\, \sqrt{2-\e}},\]
we continue
\bea I_\e&=&\intl_{0}^{a_\e} \left [\frac{(a+1)^2}{t^2 +(a+1)^2}\right]^{(n-2)/2}\,\sqrt {\frac{t^2(1-a) + a+1}{a+1}}\nonumber\\
&\times& \frac{(a+1)^2\, dt}{[t^2 +(a+1)^2]^{3/2}} \intl_{\snm1}|\Phi (\th,t)|\,d\th.\nonumber\eea
Now the routine estimate  gives (\ref{dwss}).

If $\Phi =R\,\U_a f$, then  (\ref{dwss}) yields
\bea
I_\e &\le& c_\e \intl_{0}^{a_\e} dt \intl_{\snm1} |(R\,\U_a f) (\th, t)|\, d\th\le \tilde c_\e \intl_{0}^{a_\e} dt
\intl_{\snm1} \frac{(R\,\U_a |f|) (\th, t)}{(1+t^2)^{n/2}}\, d\th\nonumber\\
&\le& \tilde c_\e \intl_{\bbr} dt
\intl_{\snm1} \frac{(R\,\U_a |f|) (\th, t)}{(1+t^2)^{n/2}}\, d\th.\nonumber\eea
Hence, by (\ref{duas3}) and (\ref{fyqatrz}),
\[
I_\e \le  \tilde c_\e \intl_{\bbr^n}
\frac{(\U_a |f|)(x)}{(1+|x|^2)^{1/2}}\,dx \le \tilde c_\e\, c_a\intl_{\sn_a} |f (\eta)|\, d\eta.\]
Thus there is a constant $c(\e, a)=\tilde c_\e\, c_a>0$ such that
\be\label{bfgrv}
||\V_a R \,\U_a f||_{L^1(\bbs^{n, \e}_+)} \le  c(\e, a) \,||f||_{L^1(\sn_a)}.\ee

 To complete the proof, we observe that the estimates (\ref{bfgrv}) and (\ref{bfeggrv}) in conjunction with  (\ref{brv}) on the dense subset  $C_c (\sn_a) \subset L^1(\sn_a)$ imply (\ref{brv}) for all $f\in L^1(\sn_a)$.
\end{proof}

In the case $a=1$, that was excluded in the previous theorem,  it is easier to prove an analogue of  (\ref{brv})
directly, by making use of the stereographic projection.
\begin{theorem} \label {jaau2}
If
\be\label{brtrv}
\intl_{\sn}\frac{|f (\eta)|}{\sqrt {1-\eta_{n+1}}}\, d\eta <\infty,\ee
 then, for almost all $\xi \in \sn_+$,  $(\S_1 f)(\xi)$ is finite   and
\be\label{kiaw} (\S_1 f)(\xi) =   (\V_1 R\,\U_{1} f)(\xi).\ee
\end{theorem}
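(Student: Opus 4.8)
The plan is to verify the identity directly, exploiting that for $a=1$ the pole of the projection is the common point $A=p_N$, so the slices are \emph{complete} $(n-1)$-subspheres through the pole and stereographic projection carries them to genuine affine hyperplanes; no analytic continuation in $\lam$ is needed. First I secure finiteness. Combining the hypothesis (\ref{brtrv}) with the last identity of Corollary \ref{dlfluig} (the case $\lam=1/2$, $p=1$, $a=1$) gives $\int_{\rn}(\U_1|f|)(x)\,(|x|^2+4)^{-1/2}\,dx<\infty$. Since $(|x|^2+4)^{-1/2}$ is comparable to $(1+|x|^2)^{-1/2}$, the function $\U_1 f$ lies in the weighted class for which (\ref{duas3}) holds, so $(R\,\U_1 f)(\th,t)$ is finite for almost all $(\th,t)\in\snm1\times\bbr$. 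Through the substitution $t=2s/\sqrt{1-s^2}$ built into $\V_1$ (see (\ref{xhyan21})), the right-hand side $(\V_1 R\,\U_1 f)(\xi)$ is then finite for almost all $\xi\in\sn_+$.

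Next I identify the geometric correspondence. For $\xi=\sqrt{1-s^2}\,\th+se_{n+1}\in\sn_+\setminus p_N$ the slice (\ref{sl}) is $\gam_1(\xi)=\{\eta\in\sn:\xi\cdot(\eta-e_{n+1})=0\}$, a complete subsphere through $p_N$. Substituting the projection formula (\ref{pqer1}) and using $x\cdot e_{n+1}=0$, I compute
\[
\xi\cdot(\pi(x)-e_{n+1})=\frac{4}{|x|^2+4}\,\bigl(\sqrt{1-s^2}\,\th\cdot x-2s\bigr),
\]
so $\pi^{-1}$ carries $\gam_1(\xi)$ bijectively onto the affine hyperplane $\t(\th,t)=\{x:\,x\cdot\th=t\}$ with $t=2s/\sqrt{1-s^2}$, which is exactly the argument appearing in $\V_1$.

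Finally I transport the measure and conclude. The map $\pi$ is conformal with pointwise linear scaling $4/(|x|^2+4)$; this is confirmed by the full-measure identity (\ref{pq301}), where the $n$-dimensional surface element transforms with the $n$-th power $\bigl(4/(|x|^2+4)\bigr)^{n}$. Restricting to the $(n-1)$-dimensional subsphere $\gam_1(\xi)$, conformality forces the induced surface measure to transform to Euclidean measure on $\t(\th,t)$ with the $(n-1)$-st power, namely $d_{\gam_1}\eta=\bigl(4/(|x|^2+4)\bigr)^{n-1}d_\th u$. Since this factor is precisely the weight defining $\U_1$ in (\ref{xhyan1}), applying Tonelli to $|f|$ first (finite a.e.\ by the previous steps) and then the same change of variables to $f$ yields
\[
(\S_1 f)(\xi)=\intl_{\gam_1(\xi)}f\,d_{\gam_1}\eta=\intl_{\t(\th,t)}(\U_1 f)(x)\,d_\th u=(R\,\U_1 f)(\th,t)=(\V_1 R\,\U_1 f)(\xi).
\]
The main obstacle is the rigorous justification of this $(n-1)$-dimensional measure relation; I would establish it either by computing the Gram determinant of $\pi$ restricted to $\t(\th,t)$, or, more cleanly, by invoking that a conformal diffeomorphism scales the volume of any hypersurface by the $(n-1)$-st power of its conformal factor, with consistency against (\ref{pq301}) serving as a check. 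Note that, unlike in Theorem \ref{jaau1}, the quantity $1-a\eta_{n+1}$ is \emph{not} bounded away from zero when $a=1$, which is exactly why the analytic approach there was inaccessible and this direct geometric route is preferable.
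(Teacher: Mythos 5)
Your proof is correct, and it takes a genuinely different route from the paper's in the main identity step. The finiteness argument is the same as the paper's: both combine the $\lam=1/2$, $p=1$, $a=1$ identity of Corollary \ref{dlfluig} with (\ref{duas3}) to get a.e.\ finiteness of $R\,\U_1 f$, hence of $\V_1 R\,\U_1 f$. For the identity itself, however, the paper does not use conformality of the $n$-dimensional projection $\pi$ at all: it fixes $\th=e_n$ by rotation invariance, moves the slice $\gam$ by a translation and rotation into the coordinate plane $e_n^\perp$, rescales it to the unit sphere there, parametrizes it by the \emph{lower-dimensional} stereographic projection via (\ref{pq301}) with $n$ replaced by $n-1$, and then performs the explicit substitutions $y=u\sin\psi$, $t=\cot\psi$ to recognize the resulting integral over $\bbr^{n-1}$ as $(R\,\U_1 f)(e_n,2t)$. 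Your route instead works upstairs: the dot-product computation $\xi\cdot(\pi(x)-e_{n+1})=\frac{4}{|x|^2+4}(\sqrt{1-s^2}\,\th\cdot x-2s)$ (which is correct) identifies $\pi^{-1}(\gam_1(\xi))$ with the hyperplane $\t(\th,t)$, $t=2s/\sqrt{1-s^2}$, and the conformality of $\pi$ with factor $4/(|x|^2+4)$ transports the $(n-1)$-dimensional surface measure with the $(n-1)$-st power, which is exactly the weight in (\ref{xhyan1}). This is conceptually cleaner and explains \emph{why} the weight in $\U_1$ is the right one, at the price of the one step you flag yourself: the hypersurface measure relation $d_{\gam_1}\eta=\bigl(4/(|x|^2+4)\bigr)^{n-1}d_\th u$ must be justified (Gram determinant, or the standard fact that a conformal diffeomorphism with factor $\lam$ scales $k$-dimensional Hausdorff measure by $\lam^k$) — the paper's rigid-motion computation sidesteps precisely this by only ever using the full-dimensional formula (\ref{pq301}). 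Two small points to tidy: strictly, $\pi^{-1}$ carries $\gam_1(\xi)\setminus\{p_N\}$ (not all of $\gam_1(\xi)$, which always contains the pole) bijectively onto $\t(\th,t)$, harmless since a point is $d_{\gam_1}$-null; and your appeal to Tonelli is really the observation that the change-of-variables identity holds unconditionally for $|f|$ with values in $[0,\infty]$, after which the identity for $f$ follows by absolute convergence a.e.
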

\begin{proof}  We first show that $(\V_1 R\,\U_{1} |f|)(\xi)<\infty$ for almost all $\xi \in \sn_+$. Then we prove  (\ref{kiaw}). This will give us the a.e. finiteness of $(\S_1 f)(\xi)$, and the proof will be complete.

By (\ref{duas3}) and (\ref{fhrz}),
\[ \intl_{\snm1 \times \bbr} \!\!\frac{(R\, \U_1f)(\theta,
t)}{(1\!+\!t^2)^{n/2}}\,d_*\theta dt\!=\! \intl_{\bbr^n}\!
\frac{(\U_1f)(x)}{(1\!+\!|x|^2)^{1/2}}\,dx\le c \intl_{\sn} \!\frac{|f (\eta)|}{\sqrt {1\!-\!\eta_{n+1}}}\, d\eta<\infty.\]
It follows that $(R\, \U_1f)(\theta,
t)<\infty$ for almost all $(\theta,
t)$, and therefore $(\V_1 R\,\U_{1} |f|)(\xi)<\infty$ for almost all $ \xi\in\sn_+$. The latter makes
 all subsequent operations with integrals  well-justified.

Let $\xi= \th  \sin\psi+ e_{n+1} \cos \psi $ $\th \in \snm1$, $0<\psi \le \pi/2$.
Because both $\S_1 $ and $R$ commute with rotations about the $x_{n+1}$-axis, it suffices to assume $\th=e_n=(0, \ldots, 0,1,0)$. Let $\t_\gam$ be the hyperplane containing the slice  $\gam=\gam(e_n, \psi)$, and let
 \be \label {sVES}
o'=e_n \cos \psi \sin \psi +e_{n+1}\, \cos^2 \psi \in \t_\gam\ee
be the center of the sphere $\gam$.
We translate $\t_\gam$ so that $o'$ moves to the origin $o=(0, \ldots, 0)$. Then we rotate the translated plane $\t_\gam -o'$, making it coincident with the coordinate plane $e_n^\perp$ and keeping the  subspace $\bbr^{n-1}=\bbr e_1 \oplus \ldots \oplus \bbr e_n$ fixed.  Let $\tilde\gam \subset e_n^\perp$ be the image of  $\gam$ under this transformation. We stretch $\tilde\gam$ up to the unit sphere $\sn$ in $e_n^\perp$ and
project $\sn$ stereographically onto  $\bbr^{n-1}$.
Thus, can write
\be \label {sVES1} \gam=o'+\rho\tilde\gam, \quad \rho=\left[ \begin{array} {cc} I_{n-1} &  0  \\ 0 & \rho_\psi
\end{array}\right], \quad \rho_\psi=\left[ \begin{array} {cc} \sin \psi &  -\cos \psi  \\ \cos \psi & \sin \psi
\end{array}\right], \ee
\[(\S_1 f) (e_n, \psi)\!=\!\intl_{\tilde\gam } f(o'\!+\!\rho\sig)\, d_{\tilde\gam }\sig\!=\!r^{n-1}\intl_{\sn}  f(o'\!+\!\rho r\sig)\, d\sig, \quad r\!=\!\sin \psi.\]
By (\ref{pq301}) (with $n$ replaced by $n-1$), we obtain
\[(\S_1 f) (\th, \psi)=(4r)^{n-1}\intl_{\bbr^{n-1}} f(o'+\rho r\tilde \pi (y))\, \frac{dy}{(|y|^2 +4)^{n-1}}, \]
 \[\tilde\pi (y)=\frac{4y+(|y|^2-4)\,e_{n+1}}{|y|^2+4}. \]

By  (\ref{sVES}) and (\ref{sVES1}),
\[o'\!+\!\rho r\tilde \nu (y)=\frac{4y \sin \psi+4e_n \sin 2 \psi + e_{n+1} (|y|^2 + 4\cos 2\psi)}{|y|^2+4}. \]
 Hence,
\bea
&&(\S_1 f) (e_n, \psi)=(4\sin \psi)^{n-1}\nonumber\\
&&\times \intl_{\bbr^{n-1}} \!\! \!f
\left (\frac{4y \sin \psi+4e_n \sin 2 \psi + e_{n+1} (|y|^2 \!+\! 4\cos 2\psi)}
{|y|^2+4}\right )\,\frac{dy}{(|y|^2 \!+\!4)^{n-1}}. \nonumber\eea
Changing variable $y=u  \sin \psi$ and setting $t=\cot \psi$, we represent this expression  as
\[4^{n-1}\!\!\intl_{\bbr^{n-1}} \!\!\! f\left (\frac{4(u\!+\!2te_n)\!+\!(|u\!+\!2te_n|^2 \!-\!4) \, e_{n+1}}{|u+2te_n|^2 +4}\right)\, \frac{du}{(|u\!+\!2te_n|^2 \!+\!4)^{n-1}}.\]
The latter is the Radon transform  $(R\,\U_1 f)(e_n, 2t)$; cf. (\ref{xhyan1}).
\end{proof}

Theorems \ref{jaau2} and  \ref{jaau1} enable us to investigate the slice transform $\S_a $ using  properties of the hyperplane Radon transform $R$. Some results of this kind are presented below. Others, like range characterization, support theorem, etc. are left to the reader.

Owing to (\ref{brv}) and (\ref{kiaw}), inversion of   $\S_a$   reduces to inversion of $R$ by the formula
\be\label{bthbc}
 \S_a^{-1}  =   \U_a^{-1} R^{-1}\,\V_{a}^{-1},\ee
where $\U_a^{-1}$ and $\V_{a}^{-1}$  have the form (\ref{ytrz}) and (\ref{ytrz1}), respectively.
Numerous  formulas for  $R^{-1}$ are available in the literature; see \cite {H11, GGG2, Ru15} and references therein. Applicability of these formulas essentially depends on the class of functions. If the action of $\S_a$ is considered on the entire space
$L^1(\sn_a)$ (for $0\le a<1$) or on functions $f$ satisfying (\ref{brtrv}) (for $a=1$), then $R$ must be inverted on the weighted $L^1$ space
\be\label{bfecv}
\tilde L^1(\rn)=\Big \{ \tilde f:\, \intl_{\rn} \frac{ |\tilde f(x)|}{(1+|x|^2)^{1/2}}\, dx <\infty\Big \}.\ee
Pointwise inversion formula for $R$ on this entire space need additional technical work. Luckily,  there is a plenty of inversion formulas when  $\tilde f \in L^p(\rn)$ with $1\le p<n/(n-1)$, in particular, for smooth $\tilde f$. Using H\"older's inequality, one can easily check that this $L^p$ space continuously embeds in $\tilde L^1(\rn)$.  If we restrict the action of $R$ to $\tilde f \in L^p(\rn)$, the class  of the original functions $f$ must be restricted accordingly to Corollaries \ref{dlfluig}, \ref{dlfluig1}. In particular, (\ref{brrv}) and (\ref{bcddrv1}) give the following statement.

\begin{theorem} \label {llhericeg1}

A function $f$ can be uniquely reconstructed from $\S_a f$ by the formulas (\ref{bthbc}),  (\ref{ytrz}), and (\ref{ytrz1}) under the following assumptions.

\vskip 0.2 truecm

\noindent In the case {\rm $-1< a<1$:}
\be\label{gYiov}  (a-\eta_{n+1})^{n-(n+1)/p} f(\eta) \in L^p (\sn_a), \qquad 1\le p< \frac{n}{n-1}.\ee

\noindent In the case {\rm $a=1${\rm :}}
\be\label{gYiov1}
(1-\eta_{n+1})^{n-1-n/p} f(\eta) \in L^p (\sn), \qquad 1\le p< \frac{n}{n-1}.\ee
In particular,

\vskip 0.2truecm
\noindent {\it for $a=0$}{\rm :}

\be\label{gbDDDCo1}
f(\eta)= \frac{1}{|\eta_{n+1}|^{n}} (R^{-1}\Psi_0)\left ( \frac{\eta'}{|\eta_{n+1}|}\right ),\ee
\[ \Psi_0(\th, t)=\frac{1}{\sqrt {t^2+1}} \,(\S_0 f)\left ( \frac{\theta +te_{n+1}}{\sqrt{t^2 +1}} \right ); \]

\noindent {\it for $a=1$}{\rm :}
\be\label{gbDDDCo2}
f(\eta)= \left (\frac{2}{1-\eta_{n+1}}\right )^{n-1} (R^{-1} \Psi_1)\left ( \frac{2\eta'}{1 -\eta_{n+1}}\right ),\ee
\[ \Psi_1(\th, t)=(\S_1 f)\left (\frac{2\th +te_{n+1}}{\sqrt{t^2 +4}}\right ). \]
\end{theorem}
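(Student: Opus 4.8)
The plan is to combine the factorization $\S_a = \V_a R\,\U_a$ already established in Theorems \ref{jaau1} and \ref{jaau2} with the classical invertibility of the hyperplane Radon transform $R$ on $L^p(\rn)$ for $1\le p<n/(n-1)$. The decisive observation is that the weighted integrability hypotheses imposed on $f$ are \emph{precisely} the statement that $\U_a f\in L^p(\rn)$. I would first record this equivalence. For $-1<a<1$, raising the power in (\ref{gYiov}) gives $\intl_{\sn_a}(a-\eta_{n+1})^{np-(n+1)}|f(\eta)|^p\,d\eta<\infty$, which is the finiteness of the right-hand side of (\ref{brrv}); since in this regime the factor $1-a\eta_{n+1}$ is bounded above and below by positive constants, the exact identity (\ref{bcddrv}) shows this is equivalent to $\U_a f\in L^p(\rn)$. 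The case $a=1$ is read off directly from the identity (\ref{bcddrv1}): condition (\ref{gYiov1}) rewrites as $\intl_{\sn}(1-\eta_{n+1})^{p(n-1)-n}|f(\eta)|^p\,d\eta<\infty$, which is exactly $\U_1 f\in L^p(\rn)$. In both cases $1\le p<n/(n-1)$, so by H\"older's inequality $\U_a f$ also lies in the weighted space $\tilde L^1(\rn)$ of (\ref{bfecv}).

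Next I would verify that the factorization of the preceding theorems actually applies. For $-1<a<1$ one needs $f\in L^1(\sn_a)$: this follows from the identity (\ref{fytrz}) together with $\U_a f\in\tilde L^1(\rn)$, because $P(x)$ is comparable to $(1+|x|^2)^{-1/2}$, so $\intl_{\sn_a}|f|\,d\eta=\intl_{\rn}|\U_a f|\,P\,dx<\infty$. For $a=1$ one needs the hypothesis (\ref{brtrv}), which follows in the same manner from the identity (\ref{fhrz}). Consequently (\ref{brv}) resp.\ (\ref{kiaw}) holds for almost all $\xi\in\sn_+$, and in particular $\V_a^{-1}\S_a f=R\,\U_a f$ after applying the explicit bijection $\V_a^{-1}$ of (\ref{ytrz1}).

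Finally, since $\U_a f\in L^p(\rn)$ with $1\le p<n/(n-1)$, the function $\U_a f$ is uniquely determined by its Radon transform, and any inversion formula for $R$ valid on $L^p(\rn)$ recovers it, so $\U_a f=R^{-1}\V_a^{-1}\S_a f$. Applying the explicit operator $\U_a^{-1}$ of (\ref{ytrz}) yields $f=\U_a^{-1}R^{-1}\V_a^{-1}\S_a f$, which is (\ref{bthbc}); uniqueness is inherited from the injectivity of $R$ on $L^p(\rn)$ together with the bijectivity of $\U_a$ and $\V_a$. The displayed special cases (\ref{gbDDDCo1}) and (\ref{gbDDDCo2}) then follow by substituting the explicit expressions (\ref{ffnw21})--(\ref{ffan21}) (for $a=0$) and (\ref{ffn21b})--(\ref{ffn21}) (for $a=1$) into this composition. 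I expect the only genuine point requiring care to be the exponent bookkeeping that makes the stated hypotheses coincide with $\U_a f\in L^p(\rn)$, and, more subtly, the verification that $f\in L^1(\sn_a)$ (resp.\ that (\ref{brtrv}) holds) so that the earlier factorization is even legitimate: this is not immediate from the $L^p$-type hypothesis and must be routed through the weighted embedding $L^p(\rn)\subset\tilde L^1(\rn)$ and the identities (\ref{fytrz}), (\ref{fhrz}).
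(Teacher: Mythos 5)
Your proposal is correct and takes essentially the same route as the paper, whose own argument is exactly the surrounding discussion: factor $\S_a=\V_a R\,\U_a$ via Theorems \ref{jaau1} and \ref{jaau2}, identify the weighted hypotheses (\ref{gYiov}) and (\ref{gYiov1}) with $\U_a f\in L^p(\rn)$ through Corollaries \ref{dlfluig} and \ref{dlfluig1}, invert $R$ on $L^p(\rn)$ with $1\le p<n/(n-1)$ using the embedding into $\tilde L^1(\rn)$, and read off the cases $a=0,1$ from (\ref{ffnw21}), (\ref{ffan21}), (\ref{ffn21b}), and (\ref{ffn21}). Your explicit verification that the hypotheses legitimize the factorization in the first place (namely $f\in L^1(\sn_a)$, resp.\ (\ref{brtrv}), via (\ref{fytrz}) and (\ref{fhrz}) together with $P(x)\asymp(1+|x|^2)^{-1/2}$) is a point the paper leaves implicit, and you handle it correctly.
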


Here the inversion formulas (\ref{gbDDDCo1}) and (\ref{gbDDDCo2}) follow from (\ref{ffnw21}), (\ref{ffan21}), (\ref{ffn21b}), and   (\ref{ffn21}).

The following statement gives an example of the inversion  formula for the  Radon transform; cf. \cite[Theorem 4.66]{Ru15}.
\begin{theorem} \label {l88g1}
A function $f \in L^p (\bbr^n)$ with $1\le p<n/(n-1)$ and $n$  odd, can be recovered from its Radon transform  by the  formula
\be\label{nnxxzz}f(x) =  \lim\limits_{t\to 0}\, \pi^{(1-n)/2} \left(-\frac {1}{2t}\,\frac {d}{dt}\right )^{(n-1)/2}\intl_{\snm1} (Rf)(\theta, t+x\cdot
\theta)\,d_*\theta,\ee
 where the  limit  is understood in the $L^p$-norm.
\end{theorem}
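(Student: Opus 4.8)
The plan is to reduce the right-hand side of (\ref{nnxxzz}) to a spherical mean of $f$ and then invoke the approximate-identity property of spherical means. For $x\in\rn$ and $t>0$ set
\[
\Phi_x(t)=\intl_{\snm1}(Rf)(\theta,t+x\cdot\theta)\,d_*\theta .
\]
Writing $(Rf)(\theta,t+x\cdot\theta)=\int_{w\cdot\theta=t}f(x+w)\,dw$ (the substitution $w=z-x$ shifts the plane $z\cdot\theta=t+x\cdot\theta$ to $w\cdot\theta=t$) and inserting the Dirac representation of the planar measure, I would first integrate in $\theta$ to obtain the radial kernel
\[
\intl_{\snm1}\delta(w\cdot\theta-t)\,d_*\theta=\frac{\sigma_{n-2}}{\sigma_{n-1}}\,\frac{(|w|^2-t^2)_+^{(n-3)/2}}{|w|^{n-2}} ,
\]
so that, after passing to polar coordinates $w=r\omega$,
\[
\Phi_x(t)=\sigma_{n-2}\intl_{|t|}^\infty (r^2-t^2)^{(n-3)/2}\,r\,m(x,r)\,dr,\qquad m(x,r)=\frac{1}{\sigma_{n-1}}\intl_{\snm1}f(x+r\omega)\,d\omega .
\]
This is precisely the Abel-type transform of (\ref{fouyg}), applied to the radialization $r\mapsto m(x,r)$, and is the planar counterpart of (\ref{ierertw}).

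Next I would identify the operator. Substituting $s=t^2$, $\rho=r^2$ turns $\Phi_x$ into a right-sided Riemann--Liouville integral of order $\alpha=(n-1)/2$, namely $(I_-^{\alpha}M)(s)=\frac{1}{\Gamma(\alpha)}\int_s^\infty(\rho-s)^{\alpha-1}M(\rho)\,d\rho$ with $M(\rho)=m(x,\sqrt\rho)$:
\[
\Phi_x=\frac{\sigma_{n-2}}{2}\,\Gamma\Big(\frac{n-1}{2}\Big)\,(I_-^{(n-1)/2}M)(s).
\]
Since $-\frac{1}{2t}\frac{d}{dt}=-\frac{d}{ds}$, the operator $\big(-\frac{1}{2t}\frac{d}{dt}\big)^{(n-1)/2}$ equals $(-d/ds)^{(n-1)/2}$. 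Because $n$ is odd, $\alpha=(n-1)/2$ is a positive integer, so this is a genuine differential operator and is the exact left inverse of $I_-^{(n-1)/2}$. Hence for each $t>0$
\[
\Big(-\frac{1}{2t}\frac{d}{dt}\Big)^{(n-1)/2}\Phi_x(t)=\frac{\sigma_{n-2}}{2}\,\Gamma\Big(\frac{n-1}{2}\Big)\,m(x,t)=\pi^{(n-1)/2}\,m(x,t),
\]
using $\sigma_{n-2}=2\pi^{(n-1)/2}/\Gamma((n-1)/2)$. Multiplying by $\pi^{(1-n)/2}$ collapses the constant to $1$, so the right-hand side of (\ref{nnxxzz}) is simply $\lim_{t\to0}m(x,t)$. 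The oddness of $n$ enters exactly here: it is what makes the fractional derivative local and removes any need for a Riesz-potential regularization.

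It then remains to pass to the limit $t\to0$ in the $L^p$-norm, using that the spherical means $m(\cdot,t)$ form an approximate identity, $\|m(\cdot,t)-f\|_p\to0$ for every $f\in L^p(\rn)$, $1\le p<\infty$. I would carry out the two preceding steps rigorously first on a dense subclass, say $C_c^\infty(\rn)$, where the interchange of integrations, the differentiations, and the exact Abel inversion are classical, and then extend to all of $L^p$ with $1\le p<n/(n-1)$ by continuity. The admissible range of $p$ is forced by the integrability of the kernel: since $(r^2-t^2)^{(n-3)/2}/r^{n-2}\sim r^{-1}$ as $r\to\infty$, the defining integral for $\Phi_x$ converges absolutely exactly when $f\in\tilde L^1(\rn)$ in the sense of (\ref{bfecv}), and by H\"older's inequality $L^p(\rn)\hookrightarrow\tilde L^1(\rn)$ for $1\le p<n/(n-1)$; this embedding also guarantees, via the mapping properties of $R$ recalled after (\ref{fouyg}), that $Rf$ is defined almost everywhere.

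The main obstacle is the $L^p$-justification of the whole chain. I must show that the finite-$t$ operator $f\mapsto\pi^{(1-n)/2}\big(-\frac{1}{2t}\frac{d}{dt}\big)^{(n-1)/2}\Phi_x(t)$ is well defined and bounded on $L^p$ uniformly for $t$ near $0$, so that the identity established on the dense class survives the passage to the limit and the limit can be identified with the $L^p$-limit of $m(\cdot,t)$. Equivalently, the delicate point is commuting the $(n-1)/2$-fold differentiation in $t$ with the integral defining $\Phi_x$ for merely integrable $f$; collapsing everything to the single clean identity ``operator $=\pi^{(n-1)/2}$ times spherical mean'' is what renders this manageable, after which the classical Lebesgue-type convergence of spherical means completes the argument.
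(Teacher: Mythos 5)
The paper offers no proof of this theorem at all: it is quoted verbatim from the literature (``cf.\ \cite[Theorem 4.66]{Ru15}''), so there is nothing internal to compare against. Your argument reconstructs the standard proof that stands behind that citation, and it is correct in substance: the identity $\int_{\bbs^{n-1}}\delta(w\cdot\theta-t)\,d_*\theta=\frac{\sigma_{n-2}}{\sigma_{n-1}}(|w|^2-t^2)_+^{(n-3)/2}|w|^{2-n}$ is right, the reduction of $\Phi_x(t)$ to the Abel-type integral of the spherical mean $m(x,r)$ matches (\ref{fouyg}), the substitution $s=t^2$, $\rho=r^2$ correctly identifies the operator as $I_-^{(n-1)/2}$ with $-\frac{1}{2t}\frac{d}{dt}=-\frac{d}{ds}$, the constant $\frac{\sigma_{n-2}}{2}\Gamma\bigl(\frac{n-1}{2}\bigr)=\pi^{(n-1)/2}$ checks out, and the role of the hypotheses is correctly located: oddness of $n$ makes the inverse of $I_-^{(n-1)/2}$ a local differential operator, while $p<n/(n-1)$ is exactly what gives $L^p(\bbr^n)\hookrightarrow\tilde L^1(\bbr^n)$ (cf.\ (\ref{bfecv})) and hence absolute convergence of $\Phi_x(t)$ for a.e.\ $x$. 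The $L^p$-convergence of spherical means, via Minkowski's integral inequality and continuity of translations, is likewise standard.

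The one point worth sharpening is the extension step you flag yourself. ``Prove on $C_c^\infty$ and extend by continuity'' is not quite enough as stated: continuity produces a bounded operator agreeing with your formula on the dense class, but the theorem asserts that the \emph{concrete} expression (\ref{nnxxzz}) computes $f$ for every $f\in L^p$, and a density argument alone does not identify the abstract extension with that expression. The cleaner route avoids density altogether: for $f\in L^p$ with $p<n/(n-1)$, the Fubini step giving $\Phi_x=c\,I_-^{(n-1)/2}M$ is already legitimate for a.e.\ $x$ (by the $\tilde L^1$ embedding), and since $(n-1)/2$ is a positive integer, $I_-^{(n-1)/2}M=I_-^1(I_-^{(n-3)/2}M)$ can be differentiated iteratively, the last differentiation holding at every Lebesgue point of $s\mapsto M(s)$. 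This yields $\pi^{(1-n)/2}\bigl(-\frac{1}{2t}\frac{d}{dt}\bigr)^{(n-1)/2}\Phi_x(t)=m(x,t)$ for a.e.\ $(x,t)$, after which the approximate-identity property of $m(\cdot,t)$ gives the $L^p$-limit exactly as you conclude. With that replacement your proof is complete and is, in essence, the argument of the cited source.
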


Many  inversion formulas that work for diverse classes of functions and  any parity of $n$ can be found in \cite[Section 4.8]{Ru15}.

\subsection{Slice Transforms of Zonal Functions}

The case when
 $f$ is a zonal function and $\S_a f$  can be expressed as a one-dimensional  Abel type integral, is worth being mentioned separately. Looking at the formula $\S_a f =   \V_a R\,\U_{a} f$, we observe that $\U_{a}$ takes zonal function on $\sn_a$ to radial functions on $\rn$, $R$ takes radial functions to single variable functions on $\bbr_+$, and the latter are transformed by $\V_a$ to zonal function on $\sn_+$. To make things precise,  we set
\be\label{asiurq} f(\eta)\equiv f_0 (\eta_{n+1}), \qquad (\U_{a} f)(x)\equiv (\stackrel{\circ}{\U}_{a} f_0)(|x|); \ee
\be\label{asiurqq} g(x)\equiv g_0 (|x|), \qquad    (Rg)(\th, t) \equiv (\stackrel{\circ}{R} g_0)(t); \ee
\be\label{asiurq43q}  \Phi (\th, t)\equiv  \Phi_0 (t), \quad t>0;  \qquad   (\V_a \Phi)(\xi)=(\stackrel{\circ}{\V}_{a} \Phi_0)(\xi_{n+1}). \ee
Here (cf. (\ref{hyan2b}), (\ref{hyanb}), (\ref{psq1tr})),
\be\label{asiurqq2}
(\stackrel{\circ}{\U}_{a} f_0)(r)=\frac{(a+1)\, P_0^{n-1} (r)}{D_0 (r)}\, (f_0 \circ \pi_0)(r),\ee
\[
D_0(r)=\sqrt{a+1}\,\sqrt{r^2 (1-a) +a+1}, \qquad P_0 (r)=\frac{a(a+1)+D_0(r)}{r^2 +(a+1)^2},\]
\[
\pi_0(r)\equiv Q_0(r)=\frac{ar^2 - (a+1)D_0(r)}{r^2 +(a+1)^2};\]
\be\label{asiurqq5}
(\stackrel{\circ}{R} g_0)(t)=\sigma_{n-2} \intl^\infty_{t}\! g_0 (r)
(r^2-t^2)^{(n-3)/2}\,r dr, \quad t>0;\ee
\be\label{asiurqq6}
(\stackrel{\circ}{\V}_{a} \Phi_0)(s)=
\sqrt{\frac{1\!-\!a^2s^2}{1\!-\!s^2}} \Phi_0 \left (\frac{(a+1)s}{\sqrt {1-s^2}}\right ), \quad s>0.\ee

Thus we have the following statement.
\begin{proposition} Let $f$ be a zonal function on $\sn_a$, $-1< a\le 1$. Suppose that $f(\eta)\equiv   f_0 (\eta_{n+1})$, so that
\be\label {ew9i} \intl_{-1}^a f_0 (t) (1-t^2)^\del\, dt<\infty,\ee
where $\del=(n-2)/2$ if $-1< a< 1$,  and  $\del=(n-3)/2$ if $a=1$. Then $(\S_a f)(\xi)=(\stackrel{\circ}{\S}_{a} f_0)(\xi_{n+1})$, where
\be\label {ew9i1}
(\stackrel{\circ}{\S}_{a} f_0)(s) =   (\stackrel{\circ}{\V}_{a}  \stackrel{\circ}{R}\,\stackrel{\circ}{\U}_{a} f_0)(s), \qquad 0< s<1.\ee
\end{proposition}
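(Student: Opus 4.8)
The plan is to reduce the assertion to the factorization $\S_a f = \V_a R\,\U_a f$ already established in Theorem~\ref{jaau1} (for $-1<a<1$) and Theorem~\ref{jaau2} (for $a=1$), and then to follow the rotational symmetry of $f$ through the three factors $\U_a$, $R$, and $\V_a$ in turn.

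First I would confirm that the hypothesis (\ref{ew9i}) places $f$ in the domain of the correct factorization theorem. For a zonal function $f(\eta)=f_0(\eta_{n+1})$, formula (\ref{cnhos}), restricted to $\sn_a$, gives $\intl_{\sn_a}|f|\,d\eta=\sig_{n-1}\intl_{-1}^a |f_0(t)|\,(1-t^2)^{(n-2)/2}\,dt$, which is precisely (\ref{ew9i}) with $\del=(n-2)/2$; hence $f\in L^1(\sn_a)$ and Theorem~\ref{jaau1} applies when $-1<a<1$. When $a=1$, the same computation yields $\intl_{\sn}|f|(1-\eta_{n+1})^{-1/2}\,d\eta=\sig_{n-1}\intl_{-1}^1 |f_0(t)|\,(1-t)^{(n-3)/2}(1+t)^{(n-2)/2}\,dt$, and since $(1+t)^{(n-2)/2}\le\sqrt2\,(1+t)^{(n-3)/2}$ on $[-1,1]$, this is dominated by a constant multiple of the integral in (\ref{ew9i}) with $\del=(n-3)/2$; thus (\ref{brtrv}) holds and Theorem~\ref{jaau2} applies.

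Next I would track the symmetry through the composition. By (\ref{pmuv}) and (\ref{pqw}) the quantities $D(x)$, $P(x)$, $Q(x)$ depend on $x$ only through $|x|$, so the stereographic image $\pi(x)=P(x)\,x+Q(x)\,e_{n+1}$ has last coordinate $Q(x)$ depending only on $|x|$. Consequently $(f\circ\pi)(x)=f_0(Q(x))$ is radial, and the weight in (\ref{hyanb}) is radial as well, so $\U_a f$ is a radial function on $\rn$ with profile exactly $\stackrel{\circ}{\U}_a f_0$ as in (\ref{asiurqq2}). Applying the radial Radon transform formula (\ref{fouyg}) to this profile produces the single-variable even function $\stackrel{\circ}{R}\stackrel{\circ}{\U}_a f_0$ of (\ref{asiurqq5}). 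Finally, the definition (\ref{hyan2b}) shows that $\V_a$ sends a function depending on $t$ alone to a function of $s=\xi_{n+1}$ alone, namely $\stackrel{\circ}{\V}_a$ of (\ref{asiurqq6}); composing the three steps gives $(\S_a f)(\xi)=(\stackrel{\circ}{\V}_a\stackrel{\circ}{R}\,\stackrel{\circ}{\U}_a f_0)(\xi_{n+1})$, which is (\ref{ew9i1}).

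The computations are essentially bookkeeping, and the points that require attention are the two integrability regimes above and the radiality of $\U_a f$. The one genuinely technical issue---that the radial profile $g_0=\stackrel{\circ}{\U}_a f_0$ is integrable enough against the weight $(r^2-t^2)^{(n-3)/2}r$ for (\ref{fouyg}) to apply literally---requires no new estimate, since it is already guaranteed by the $L^1$ and weighted-$L^1$ bounds in Corollaries~\ref{dlfluig} and~\ref{dlfluig1} that underlie the factorization theorems.
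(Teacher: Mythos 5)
Your proposal is correct and takes essentially the same route as the paper: the paper likewise derives the proposition directly from the factorizations in Theorems~\ref{jaau1} and \ref{jaau2}, observing that $\U_a$ maps zonal functions to radial ones, $R$ maps radial functions to single-variable functions via (\ref{fouyg}), and $\V_a$ returns a zonal function, with (\ref{ew9i}) being the zonal reformulation of the integrability hypotheses. Your write-up is in fact slightly more careful than the paper's, since you verify explicitly (with the bound $(1+t)^{(n-2)/2}\le\sqrt2\,(1+t)^{(n-3)/2}$) that (\ref{ew9i}) with $\del=(n-3)/2$ implies (\ref{brtrv}) in the case $a=1$, a step the paper leaves implicit.
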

 The condition (\ref {ew9i}) is a reformulation for zonal functions of the assumption for $f$ in Theorems \ref{jaau1} and  \ref{jaau2}.

The  cases $a=0$ and $a=1$ in (\ref {ew9i1}) are of particular  interest.

\begin{corollary} Let $f(\eta)\equiv   f_0 (\eta_{n+1})$.\\

\noindent {\rm (i)} If $a=0$,  then  $(\S_0 f)(\xi)=F_0 (\sqrt {1-\xi_{n+1}^2})$, where
\be\label{ieuutw}
F_0 (\rho)=\frac{\sigma_{n-2}}{\rho^{n-2}} \intl_0^\rho f_0 (-t)(\rho^2 -t^2)^{(n-3)/2}\, dt.\ee

\noindent {\rm (ii)} If $a=1$, then  $(\S_1 f)(\xi)=F_1 (2\xi_{n+1}^2 -1)$, where
\be\label{ieu55utw}
F_1 (\rho)=\sigma_{n-2}\,\left (\frac{1\!- \!\rho}{2}\right )^{(3-n)/2}  \intl_\rho^1 f_0 (t) (t\!-\!\rho)^{(n-3)/2}\, (1\!-\!t)^{n-3}\,dt.\ee
\end{corollary}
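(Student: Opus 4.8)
The plan is to specialize the factorization $(\stackrel{\circ}{\S}_{a} f_0)(s) = (\stackrel{\circ}{\V}_{a}\stackrel{\circ}{R}\,\stackrel{\circ}{\U}_{a} f_0)(s)$ of the preceding Proposition to the two values $a=0$ and $a=1$, insert the explicit forms of the three zonal operators, and reduce the middle Radon factor $\stackrel{\circ}{R}$ in (\ref{asiurqq5}) to the stated one-dimensional Abel-type integral by a single change of variable. Throughout, the integrability hypothesis (\ref{ew9i}) guarantees absolute convergence, so the substitutions and the evaluation at the endpoint prescribed by $\stackrel{\circ}{\V}_a$ are legitimate.

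For $a=0$, by (\ref{mytrin}) the map $\stackrel{\circ}{\U}_0$ reads $(\stackrel{\circ}{\U}_0 f_0)(r) = (r^2+1)^{-n/2} f_0\big(-(r^2+1)^{-1/2}\big)$ (cf. (\ref{xhyan0})). I would plug this into (\ref{asiurqq5}) and substitute $\tau = (r^2+1)^{-1/2}$, i.e. $\tau = -\pi_0(r)$; this sends $r\in(t,\infty)$ to $\tau\in(0,(t^2+1)^{-1/2})$, and after collecting the powers of $\tau$ the integral becomes $\rho^{3-n}\sig_{n-2}\intl_0^{\rho} f_0(-\tau)(\rho^2-\tau^2)^{(n-3)/2}\,d\tau$ with $\rho=(t^2+1)^{-1/2}$. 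Composing with $\stackrel{\circ}{\V}_0$, which evaluates at $t=s/\sqrt{1-s^2}$ (so that $\rho=\sqrt{1-s^2}=\sqrt{1-\xi_{n+1}^2}$) and multiplies by $(1-s^2)^{-1/2}=1/\rho$, produces precisely the kernel and prefactor $\sig_{n-2}\rho^{2-n}$ of (\ref{ieuutw}).

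For $a=1$, by (\ref{mytrin1}) we have $(\stackrel{\circ}{\U}_1 f_0)(r) = \big(4/(r^2+4)\big)^{n-1} f_0\big((r^2-4)/(r^2+4)\big)$ (cf. (\ref{xhyan1})), and $\stackrel{\circ}{\V}_1$ is the pure evaluation $\Phi_0\mapsto \Phi_0\big(2s/\sqrt{1-s^2}\big)$, since its weight $\sqrt{(1-s^2)/(1-s^2)}$ equals $1$. The natural substitution is $\tau = (r^2-4)/(r^2+4)$, the very argument of $f_0$; it carries $r\in(t,\infty)$ to $\tau\in(\rho,1)$, where the lower endpoint $\rho=(t^2-4)/(t^2+4)$ equals $2s^2-1=2\xi_{n+1}^2-1$ once $t=2s/\sqrt{1-s^2}$ is used. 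Under this substitution one has $4/(r^2+4)=(1-\tau)/2$, $r\,dr = 4(1-\tau)^{-2}\,d\tau$, and $r^2-t^2 = 8(\tau-\rho)\big/\big[(1-\tau)(1-\rho)\big]$, so the three factors assemble into a constant multiple of $f_0(\tau)(\tau-\rho)^{(n-3)/2}(1-\tau)^{\beta}$ integrated against the prefactor $\big(\tfrac{1-\rho}{2}\big)^{(3-n)/2}$, which is the Abel integral of (\ref{ieu55utw}).

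The only real work—and the step most prone to slips—is the exponent bookkeeping in the $a=1$ case: each of $(4/(r^2+4))^{n-1}$, $(r^2-t^2)^{(n-3)/2}$, and the Jacobian $r\,dr$ contributes a power of $(1-\tau)$ and of $2$, and these must be combined correctly to read off the kernel exponent $\beta$ and the constant. I would settle both at once with the normalization test $f_0\equiv 1$: then $(\stackrel{\circ}{\S}_1 1)(\xi)$ must equal the surface area $\sig_{n-1}(1-\xi_{n+1}^2)^{(n-1)/2}$ of the slice $\gam_1(\xi)$, and the resulting Beta-integral together with the duplication formula for $\Gamma$ pins down the exponents and the constant unambiguously (the analogous check $f_0\equiv 1$ for $a=0$ gives the same reassurance). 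This consistency check is what I expect to be the decisive verification of the final kernel.
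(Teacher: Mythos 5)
Your approach coincides with the paper's own proof: it specializes the zonal factorization (\ref{ew9i1}) to $a=0$ and $a=1$, inserts the explicit forms (\ref{xhyan0}), (\ref{xhyan20}), (\ref{xhyan1}), (\ref{xhyan21}) together with (\ref{asiurqq5}), and changes variables. Your substitution identities are all correct: in the $a=0$ case the powers of $\tau$ cancel exactly and, after composing with $\stackrel{\circ}{\V}_0$, you recover (\ref{ieuutw}) verbatim; and for $a=1$ the identities $4/(r^2+4)=(1-\tau)/2$, $r\,dr=4(1-\tau)^{-2}\,d\tau$, $r^2-t^2=8(\tau-\rho)/[(1-\tau)(1-\rho)]$, and $\rho=2s^2-1$ are right.

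The one step you deferred, the exponent bookkeeping for $a=1$, is worth carrying out explicitly, because it does \emph{not} reproduce the printed formula (\ref{ieu55utw}). Collecting powers of $(1-\tau)$: the factor $\left((1-\tau)/2\right)^{n-1}$ contributes $n-1$, the kernel $(r^2-t^2)^{(n-3)/2}$ contributes $-(n-3)/2$, the Jacobian contributes $-2$, for a total exponent $(n-1)-(n-3)/2-2=(n-3)/2$; the powers of $2$ combine to $2^{(n-3)/2}$, which merges with $(1-\rho)^{-(n-3)/2}$ into the prefactor $\left(\frac{1-\rho}{2}\right)^{(3-n)/2}$. The result is
\[
F_1(\rho)=\sigma_{n-2}\left(\frac{1-\rho}{2}\right)^{(3-n)/2}\intl_\rho^1 f_0(t)\,(t-\rho)^{(n-3)/2}\,(1-t)^{(n-3)/2}\,dt,
\]
with exponent $(n-3)/2$ on $(1-t)$ rather than the $n-3$ appearing in (\ref{ieu55utw}). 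Your own normalization test settles this in favor of $(n-3)/2$: taking $f_0\equiv 1$ and substituting $t=\rho+(1-\rho)u$, the Beta integral $B\left(\frac{n-1}{2},\frac{n-1}{2}\right)$ together with the duplication formula yields $F_1(\rho)=\sigma_{n-1}\left(\frac{1-\rho}{2}\right)^{(n-1)/2}$, which is exactly the area $\sigma_{n-1}(1-\xi_{n+1}^2)^{(n-1)/2}$ of the slice $\gamma_1(\xi)$, since $(1-\rho)/2=1-\xi_{n+1}^2$. With the printed exponent $n-3$ the test fails for every $n\neq 3$ (the $(1-\rho)$-power comes out as $n-2$ instead of $(n-1)/2$), and for $n=2$ the printed integral even diverges at $t=1$ when $f_0\equiv 1$; the two versions agree only at $n=3$. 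So your method is sound and, once the bookkeeping is completed, fully proves part (ii) in its corrected form: the check you proposed as decisive in fact detects a misprint in the stated formula (\ref{ieu55utw}) rather than any flaw in your argument.
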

\begin{proof}  {\rm (i)} Owing to (\ref{xhyan0}),  (\ref{asiurqq5}), and (\ref{xhyan20}), from (\ref{ew9i1}) we have
\bea
&&(\stackrel{\circ}{\S}_{0} f_0)(s) =\frac{\sigma_{n-2}}{\sqrt{1\!-\!s^2}} \intl^\infty_{t}\! g_0 (r)
(r^2-t^2)^{(n-3)/2}\,r dr\nonumber\\
&& \left (\text{here} \quad t\!=\!\frac{s}{\sqrt{1\!-\!s^2}}, \quad  g_0 (r)\!=\!\frac{1}{(r^2 \!+\!1)^{n/2}} f_0\left(-\frac{1}{\sqrt{r^2\! +\!1}} \right)\right )\nonumber\\
 &&=\frac{\sigma_{n-2}}{\sqrt{1\!-\!s^2}} \! \intl^\infty_{s/\sqrt{1\!-\!s^2}} \!\!\!\! f_0\left(-\frac{1}{\sqrt{r^2\! +\!1}} \right)
\left (r^2\!-\!\frac{s^2}{1\!-\!s^2} \right )^{(n-3)/2} \!\frac{rdr}{(r^2\! +\!1)^{n/2}}.\nonumber\eea
Changing variables, we obtain (\ref{ieuutw}).

 {\rm (ii)} We proceed as above, by making use of (\ref{xhyan1}), (\ref{xhyan21}), (\ref{asiurqq5}), and (\ref{ew9i1}):
\bea
&&(\stackrel{\circ}{\S}_{1} f_0)(s) =
\sigma_{n-2} \intl^\infty_{t}\! g_0 (r) (r^2-t^2)^{(n-3)/2}\,r dr\nonumber\\
&& \left (\text{here} \quad t\!=\!\frac{2s}{\sqrt{1\!-\!s^2}}, \quad  g_0 (r)\!=\!\left (\frac{4}{r^2 \!+\!4}\right )^{n-1}
f_0\left( \frac{r^2 \!-\!4}{r^2\! +\!4} \right )\right )\nonumber\\
 &&=\sigma_{n-2}\! \intl^\infty_{2s/\sqrt{1\!-\!s^2}} \!\!\!\! \left (\frac{4}{r^2 \!+\!4}\right )^{n-1}\!\!
f_0\left( \frac{r^2 \!-\!4}{r^2\! +\!4} \right )
\left (r^2\!-\!\frac{4s^2}{1\!-\!s^2} \right )^{(n-3)/2} \!rdr.\nonumber\eea
Changing variables, we obtain (\ref{ieu55utw}).
\end{proof}

\section{Appendix}

It is  known  that if
 $f$ is  integrable  on $[0,\infty)$ and
 continuous from the right at $t=0$, then
 \be\label{t224}
\lim\limits_{\a \to 0+} {1\over \Gamma (\a)} \intl^\infty_0 t^{\a-1}
f(t) \,dt =f(0);\ee
see, e.g., \cite[Theorem 2.41]{Ru15}.
We need the following generalization of this statement.

\begin{lemma} \label {weakenedz} Let $0<\a_0<1$, and let $u(\a,t)$ be a measurable function on $(0,\a_0] \times (0,\infty)$, satisfying the following conditions.
\vskip 0.2 truecm

\noindent {\rm (a)}$ \; \lim\limits_{(\a,t) \to (0,0)} u(\a,t)=u_0$.

${}$

\noindent {\rm (b)} There exists $c>0$ such that
\be \label{app0}\intl^\infty_0 |u(\a,t)|\, dt \le c \quad \text{for all}\quad \a\in  (0,\a_0].  \ee
Then
 \be\label{t224}
\lim\limits_{\a \to 0} {1\over \Gamma (\a)} \intl^\infty_0 t^{\a-1}
u(\a,t) \,dt =u_0.\ee
\end{lemma}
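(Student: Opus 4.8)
The plan is to reduce the claim to two elementary facts about the Gamma function near the origin: that $\Gamma(\a)\to+\infty$ (hence $1/\Gamma(\a)\to 0$) as $\a\to 0+$, and that for every fixed $\delta>0$
\be
\frac{1}{\Gamma(\a)} \intl_0^\delta t^{\a-1}\, dt = \frac{\delta^\a}{\Gamma(\a+1)} \xrightarrow[\a\to 0]{} 1,
\ee
since $\delta^\a\to 1$ and $\Gamma(\a+1)\to\Gamma(1)=1$. Given $\e>0$, I would first invoke condition (a) to fix $\delta>0$ so small that $|u(\a,t)-u_0|<\e$ whenever $0<\a<\delta$ and $0<t<\delta$ (the joint convergence in (a) furnishes a single such $\delta$, valid for all sufficiently small $\a$ at once), and then split the integral as $\intl_0^\infty=\intl_0^\delta+\intl_\delta^\infty$ and treat the two pieces separately, keeping $\delta$ fixed while letting $\a\to 0$.

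For the tail, since $\a-1<0$ the factor $t^{\a-1}$ is decreasing, hence bounded by $\delta^{\a-1}$ on $[\delta,\infty)$; combining this with the uniform bound (b) gives
\be
\left| \frac{1}{\Gamma(\a)} \intl_\delta^\infty t^{\a-1} u(\a,t)\, dt\right| \le \frac{\delta^{\a-1}}{\Gamma(\a)}\intl_\delta^\infty |u(\a,t)|\, dt \le \frac{c\, \delta^{\a-1}}{\Gamma(\a)}.
\ee
As $\a\to 0$ the numerator tends to the finite constant $c/\delta$ while $1/\Gamma(\a)\to 0$, so the whole tail vanishes. This is the step where the blow-up of $\Gamma$ near zero does the essential work: the measure $t^{\a-1}\,dt/\Gamma(\a)$ loses all of its mass away from the origin, so the contribution of $u$ on $[\delta,\infty)$ disappears no matter how $u$ behaves there, provided only that (b) holds.

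For the head I would write $u(\a,t)=u_0+(u(\a,t)-u_0)$ and use the normalizing identity. The $u_0$-term contributes $u_0\,\delta^\a/\Gamma(\a+1)\to u_0$, whereas the remainder is controlled by
\be
\left| \frac{1}{\Gamma(\a)}\intl_0^\delta t^{\a-1}(u(\a,t)-u_0)\, dt\right| \le \frac{\e}{\Gamma(\a)}\intl_0^\delta t^{\a-1}\, dt = \e\,\frac{\delta^\a}{\Gamma(\a+1)},
\ee
which tends to $\e$. Writing $J(\a)=\tfrac{1}{\Gamma(\a)}\intl_0^\infty t^{\a-1}u(\a,t)\,dt$ and assembling the three estimates yields $\limsup_{\a\to 0}|J(\a)-u_0|\le \e$, and since $\e>0$ is arbitrary, $J(\a)\to u_0$. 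The only genuine subtlety, and the point that distinguishes this from the classical statement (\ref{t224}), is that $u$ depends on $\a$ as well as on $t$; this is handled precisely by the joint convergence in (a), which supplies one $\delta$ good for all small $\a$, and by the $\a$-independent bound (b), which tames the tail uniformly.
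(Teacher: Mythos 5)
Your proof is correct, and while it shares the paper's overall skeleton, the core step is handled by a genuinely different and more elementary mechanism. Both arguments split the integral at a small $\delta$ furnished by the joint limit in (a) (the paper phrases the smallness region as $0<\alpha+t<\delta$, yours as the square $0<\alpha<\delta$, $0<t<\delta$ --- equivalent up to rescaling $\delta$, so your single $\delta$ valid for all small $\alpha$ is legitimate), and both kill the tail with the identical bound $c\,\delta^{\alpha-1}/\Gamma(\alpha)\to 0$. The difference is the head integral $I_1(\alpha)$: the paper integrates by parts in the Stieltjes sense against $\varphi_\alpha(t)=\int_0^t u(\alpha,s)\,ds$, introduces the averaged function $\psi_\alpha(t)=t^{-1}\int_0^t[u(\alpha,s)-u_0]\,ds$, checks that the boundary term at $t=0$ vanishes, and then estimates three resulting pieces $A_1,A_2,A_3$; you instead write $u=u_0+(u-u_0)$ and estimate the remainder pointwise by $\varepsilon$, using the normalization $\Gamma(\alpha)^{-1}\int_0^\delta t^{\alpha-1}\,dt=\delta^{\alpha}/\Gamma(\alpha+1)\to 1$. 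Your route is shorter and uses nothing beyond the pointwise smallness that (a) supplies; what the paper's integration by parts buys is that it only ever exploits the smallness of the Cesàro average, $|\psi_\alpha(t)|<\varepsilon$, so it would survive a weakening of hypothesis (a) to convergence in mean over $(0,t)$ --- a Lebesgue-point-type condition, in the spirit of classical proofs of the limiting property of Riesz potentials. One small point worth making explicit in your write-up: for $\alpha<\delta$ your two estimates together also show that $t^{\alpha-1}u(\alpha,t)$ is absolutely integrable on $(0,\infty)$ (bounded integrand times integrable weight near $0$, condition (b) with $t^{\alpha-1}\le\delta^{\alpha-1}$ on the tail), which is what justifies splitting the integral in the first place.
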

\begin{proof} Choose any $\e>0$ and let $\del\equiv \del (\e)<1$ be such
that
\be \label{app1}
|u(\a,s)- u_0|<\e \quad \text{\rm whenever} \quad 0<\a+s<\del. \ee
We set
\[ \vp_\a (t)=\intl_0^t u(\a,s)\, ds, \qquad \psi_\a(t)=\frac{1}{t}\intl_0^t [u(\a,s))-u_0)]\,
ds,\]
so that
\be \label{app2} \vp_\a (t)=t\,[\psi_\a(t)+ u_0]\ee
and, by (\ref{app1}),
\be \label{app3}  |\psi_\a(t)| <\e \quad \text{\rm whenever} \quad 0<\a+t<\del. \ee
 We have
\bea  {1\over \Gamma (\a)} \intl^\infty_0 t^{\a-1}
u(\a,t) \,dt &=&
{1\over \Gamma (\a)} \intl^\del_0
t^{\a-1} u(\a,t) dt + {1\over \Gamma (\a)} \intl^\infty_\del t^{\a-1}
u(\a,t) dt\nonumber\\
&=& I_1 (\a) +I_2(\a).\nonumber\eea
By (\ref{app0}),
\[
|I_2 (\a)| \le {1\over \Gamma (\a)} \intl^\infty_\del t^{\a-1}
|u(\a,t)| dt \le  {\del^{\a-1}\over \Gamma (\a)}  \intl^\infty_\del
|u(\a,t)| dt \le {c\, \del^{\a-1}\over \Gamma (\a)} \to 0\]
as $\a \to 0$.
To estimate $I_1 (\a)$, we proceed  as follows.
\bea I_1 (\a)&=&{1\over \Gamma (\a)} \intl^\del_0 t^{\a-1} d\vp_\a
(t)=\frac{t^{\a-1}\vp_\a(t)}{\Gamma (\a)}\Bigg |_0^\del
+\frac{1-\a}{\Gamma (\a)}\intl^\del_0
t^{\a-2}\vp_\a (t)\, dt\nonumber\\
&=&\frac{\del^{\a-1}\vp_\a (\del)}{\Gamma (\a)}-\lim\limits_{t \to 0}
\frac{t^{\a-1}\vp_\a(t)}{\Gamma (\a)}+
\frac{1-\a}{\Gamma (\a)}\intl^\del_0 t^{\a-1}\,[\psi_\a(t)+u_0]\, dt\nonumber\\
&=&\frac{\del^{\a-1}\vp_\a (\del)}{\Gamma (\a)}-\lim\limits_{t \to 0}
\frac{t^{\a}[\psi_\a(t)+u_0]}{\Gamma (\a)}+
\frac{1-\a}{\Gamma (\a)}\intl^\del_0
t^{\a-1}\,[\psi_\a(t)+u_0]\, dt.\nonumber\eea
By (\ref{app3}), the limit in this expression is zero, and therefore
\[ I_1 (\a)=\frac{\del^{\a-1}\vp_\a (\del)}{\Gamma (\a)}+
\frac{1-\a}{\Gamma
(\a)}\intl^\del_0
t^{\a-1}\,\psi_\a(t)\, dt +u_0\, \frac{\del^{\a} (1-\a)}{\Gamma (\a +1)}.\]
Hence
 \bea I_1 (\a) -u_0&=&
u_0\left [\frac{\del^{\a} (1-\a)}{\Gamma (\a+1)}-1\right
]+\frac{\del^{\a-1}\vp_\a (\del)}{\Gamma (\a)}+\frac{1-\a}{\Gamma
(\a)}\intl^\del_0 t^{\a-1} \psi_\a(t)\, dt\nonumber\\
&=&  A_1 (\a)+A_2 (\a)+ A_3 (\a).\nonumber\eea
Clearly, $  A_1 (\a)\to 0$, as $\a\to 0$. Further, by (\ref{app0}),
\[
|A_2 (\a)|=\frac{\del^{\a-1}}{\Gamma (\a)} \,\Bigg | \intl^\del_0 u(\a,s)\, ds \Bigg |\le \frac{c\,\del^{\a-1}}{\Gamma (\a)}\to 0\]
as $\a\to 0$.
It remains to note that by (\ref{app3}),
\[
|A_3 (\a)| \le \frac{\e\, (1-\a)\, \del^\a}{\Gamma
(\a+1)} \to \e, \]
as $\a\to 0$.
Finally, \[\lim\limits_{\a \to 0}|I_1 (\a) -f(0)|\le
\e.\] Since $\e>0$ is arbitrarily, the proof is complete.
\end{proof}

The next statement is a  consequence of Lemma \ref{weakenedz}.
For the classical Riesz potential
\be\label{rpot} (I^\a g)(x)=\frac{1}{\gamma_n(\a)}
\intl_{\bbr^n}
 \frac{g(y)\,dy}{|x-y|^{n-\a}},\qquad
  \gamma_n(\a)=
  \frac{2^\a\pi^{n/2}\Gamma(\a/2)}{\Gamma((n-\a)/2)},
  \ee
it is known that $ \lim\limits_{\a\to 0} \,(I^\a g)(x)=g (x)$ if $g$ is good enough; see, e.g.,   \cite [Lemma 3.2]{Ru15}.
  A generalization of this fact to more general integrals
\be\label{rpotv} V_\a (x)=\frac{1}{\gamma_n(\a)}
\intl_{\bbr^n}
 \frac{v(\a, y)}{|x-y|^{n-\a}}\,dy\ee
is the following.

\begin{lemma}\label {riesz-enLEM}  Let $0<\a_0<1$, $n \ge 1$, and let $v$ be a measurable function on $(0,\a_0] \times \bbr^n$. Given a point $x\in \rn$, suppose that there is a ball $B_x(r)=\{y\in\rn: \, |x-y|\le r \}$, $r>0$, and nonnegative functions  $c_1 (x)$ and $c_2 (x)$ such that

\vskip 0.2 truecm

\noindent {\rm (a)}$\;  |v(\a, y)| \le c_1 (x)$ for all $(\a, y) \in (0,\a_0] \times B_x(r)$,

\vskip 0.2 truecm

\noindent {\rm (b)}
\be \label{app0s} \intl_{\rn \setminus B_x(r)} \frac{|v(\a, y)|}{|x-y|^{n-2}}\, dy \le c_2 (x)\quad \text{for all} \quad \a\in  (0,\a_0].  \ee
If  $\lim\limits_{(\a, y)\to (0,x)} v(\a, y)=v (x)$, then $\lim\limits_{\a\to 0} \,V_\a (x)=v (x)$.
\end{lemma}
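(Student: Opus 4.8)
The plan is to reduce the $n$-dimensional integral (\ref{rpotv}) to the one-dimensional situation of Lemma \ref{weakenedz} by passing to polar coordinates centered at $x$. Writing $y=x+t\th$ with $t=|y-x|>0$ and $\th \in \snm1$, so that $dy=t^{n-1}\,dt\,d\th$ and $|x-y|^{n-\a}=t^{n-\a}$, I would set
\[
u(\a,t)=\intl_{\snm1} v(\a, x+t\th)\, d\th,
\]
which is measurable on $(0,\a_0]\times (0,\infty)$ by Tonelli's theorem. Then
\[
V_\a (x)=\frac{1}{\gamma_n(\a)}\intl_0^\infty t^{\a-1} u(\a,t)\, dt=\frac{\Gam(\a)}{\gamma_n(\a)}\cdot \frac{1}{\Gam(\a)}\intl_0^\infty t^{\a-1} u(\a,t)\, dt ,
\]
so it suffices to verify hypotheses (a) and (b) of Lemma \ref{weakenedz} for $u$ and then to track the constant $\Gam(\a)/\gamma_n(\a)$ as $\a\to 0$.

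For hypothesis (a), I would exploit that the assumed limit $\lim_{(\a,y)\to (0,x)}v(\a,y)=v(x)$ is automatically \emph{uniform} along the family $y=x+t\th$: since $|x+t\th-x|=t$ does not depend on $\th$, for each $\e>0$ there is $\del>0$ with $|v(\a,x+t\th)-v(x)|<\e$ for all $\th\in\snm1$ whenever $0<\a+t<\del$. Integrating over $\snm1$ gives $|u(\a,t)-\sig_{n-1}v(x)|\le \sig_{n-1}\e$ in that range, so that
\[
\lim\limits_{(\a,t)\to (0,0)} u(\a,t)=\sig_{n-1}\, v(x)=:u_0, \qquad \sig_{n-1}=\frac{2\pi^{n/2}}{\Gam(n/2)}.
\]

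For hypothesis (b), I would split $\intl_0^\infty |u(\a,t)|\,dt$ at $t=r$. On $0<t\le r$ the point $x+t\th$ lies in $B_x(r)$, so assumption (a) of the present lemma gives $|v(\a,x+t\th)|\le c_1(x)$ and hence $\intl_0^r |u(\a,t)|\,dt\le \sig_{n-1}\,c_1(x)\,r$. On $t>r$, rewriting assumption (b) of the present lemma in polar coordinates turns the weight $|x-y|^{n-2}$ together with the Jacobian $t^{n-1}$ into the single factor $t$:
\[
\intl_{\rn\setminus B_x(r)}\frac{|v(\a,y)|}{|x-y|^{n-2}}\,dy=\intl_r^\infty t\intl_{\snm1}|v(\a,x+t\th)|\,d\th\,dt\le c_2(x).
\]
Since $t>r$ forces $1\le t/r$, this yields $\intl_r^\infty |u(\a,t)|\,dt\le r^{-1}c_2(x)$, and therefore $\intl_0^\infty |u(\a,t)|\,dt\le \sig_{n-1}c_1(x)\,r+r^{-1}c_2(x)$, a bound independent of $\a$. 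This is exactly where the exponent $n-2$ is used, and I expect this matching of powers to be the one genuinely load-bearing step of the argument.

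With (a) and (b) in hand, Lemma \ref{weakenedz} gives $\Gam(\a)^{-1}\intl_0^\infty t^{\a-1}u(\a,t)\,dt\to u_0=\sig_{n-1}v(x)$ as $\a\to 0$. It then remains to compute $\lim_{\a\to 0}\Gam(\a)/\gamma_n(\a)$. Using $\Gam(\a)=\Gam(1+\a)/\a$ and $\Gam(\a/2)=2\Gam(1+\a/2)/\a$ one obtains $\Gam(\a)/\Gam(\a/2)\to 1/2$, whence
\[
\frac{\Gam(\a)}{\gamma_n(\a)}=\frac{\Gam(\a)\,\Gam((n-\a)/2)}{2^\a\,\pi^{n/2}\,\Gam(\a/2)}\longrightarrow \frac{\Gam(n/2)}{2\pi^{n/2}}=\frac{1}{\sig_{n-1}}.
\]
Multiplying the two limits, the factors $\sig_{n-1}$ cancel and $\lim_{\a\to 0}V_\a(x)=v(x)$, as required. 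Apart from the uniformity in $\th$ needed for (a) and this constant bookkeeping, every step is routine.
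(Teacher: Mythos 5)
Your proof is correct and takes essentially the same route as the paper: reduce to the one-dimensional Lemma \ref{weakenedz} via polar coordinates centered at $x$, verify its hypothesis (b) by splitting at $t=r$ so that the Jacobian $t^{n-1}$ and the weight $|x-y|^{2-n}$ combine into the single factor $t$, exactly as in the paper's argument. The only (cosmetic) difference is bookkeeping: the paper substitutes $t=r^2$, which turns the exponent into $\a/2$ and makes the prefactor $\sig_{n-1}\Gam((n-\a)/2)/(2^{\a+1}\pi^{n/2})$ tend to $1$ automatically, whereas you keep the exponent $\a$ and instead compute $\lim_{\a\to 0}\Gam(\a)/\gamma_n(\a)=1/\sig_{n-1}$, cancelling against $u_0=\sig_{n-1}v(x)$ --- both computations are correct.
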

\begin{proof} Changing variables and passing to polar coordinates, we obtain
\bea V_\a(x)&=&\frac{\sig_{n-1}} {\gamma_n(\a)} \intl_0^\infty r^{\a-1}\, dr \intl_{\sn} v(\a, x-r\th)\, d_*\th\nonumber\\
&=&\frac {\sig_{n-1}\,\Gamma((n-\a)/2)} {2^{\a+1}\,\pi^{n/2}}\left [\frac{1}{\Gam (\a/2)}\intl_0^\infty t^{\a/2 -1} u_x(\a,t)\,dt\right ],\nonumber\eea
where
\be \label{app0st} u_x(\a,t)=\intl_{\snm1} v(\a, x-\sqrt {t}\, \th)\, d_*\th.\ee
Because the constant in front of the square bracket equals $1$ when $\a=0$, it remains to show that the expression inside the  brackets falls into the scope of Lemma \ref{weakenedz} with $u_0=v(x)$.
By the assumption (a), we can pass to the limit under the sign of integration in (\ref{app0st}) to get  $\lim\limits_{(\a,t) \to (0,0)} u_x(\a,t)=v(x)$. The validity of (\ref{app0}) is checked as follows.
\bea
&&\intl^\infty_0 |u_x(\a,t)|\, dt\le \frac{2}{\sig_{n-1}}\intl^\infty_0 r\, dr \intl_{\snm1} |v(\a, x-r \th)|\, d\th\nonumber\\
&&=\frac{2}{\sig_{n-1}}\,\Bigg ( \,\intl_{B_x(r)} +  \intl_{\rn \setminus B_x(r)}\Bigg )\,  \frac{|v(\a, y)|}{|x-y|^{n-2}}\, dy
\nonumber\\
&&\le \frac{2}{\sig_{n-1}}\,\Bigg ( c_1(x) \intl_{B_x(r)}\frac{dy}{|x-y|^{n-2}} +  \intl_{\rn \setminus B_x(r)}\frac{|v(\a, y)|}{|x-y|^{n-2}}\, dy\Bigg).\nonumber\eea
By the assumptions (a) and (b), the last expression is finite and the proof is complete.
\end{proof}

Below we show two applications of Lemma \ref{riesz-enLEM} that have been used in the present paper.

\begin{lemma}\label {WWWLEM} Let
\be \label{cosx} (W_\lam)(\xi) =\gam_{n,\lam}
 \intl_{\sn} w(\lam, \eta)\, |\xi \cdot \eta|^{\lam} \,d_*\eta, \ee
  \[\gam_{n,\lam}=\frac{\pi^{1/2}\,\Gamma( -\lam/2)}{\Gamma ((n+1)/2)\, \Gamma ((1+\lam)/2)},\quad -1<\lam \le 0, \quad n\ge 2,\]
where $w$ is a continuous function on $[-1,0] \times \sn$. If
  $\lim\limits_{\lam\to -1} w(\lam, \eta)=w_0(\eta)$,
 then $\lim\limits_{\lam\to -1} (W_\lam)(\xi) =c_n \,(\F w_0) (\xi)$, where $c_n= \pi^{1/2}/\Gamma (n/2)$ and  $\F$ is the Funk transform (\ref{fu}).
\end{lemma}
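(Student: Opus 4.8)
The plan is to mimic the proof of Lemma \ref{passlim2}, reducing $(W_\lam)(\xi)$ to a one‑dimensional Riesz potential and then passing to the limit by means of the Appendix statement Lemma \ref{riesz-enLEM}, which is precisely designed to handle densities depending on the parameter. First I would apply the integration formula (\ref{clop10}) with $h(t)=|t|^\lam$ to the fixed‑$\lam$ function $w(\lam,\cdot)$, which yields
\be
(W_\lam)(\xi)=\frac{\gam_{n,\lam}\,\sig_{n-1}}{\sig_n}\intl_{-1}^1 |t|^\lam\, g_\lam(t)\, dt,
\ee
where $g_\lam(t)=(M_\xi\, w(\lam,\cdot))(t)\,(1-t^2)^{(n-2)/2}$, the operator $M_\xi$ is the spherical mean (\ref{sphm0}), and the factor $1/\sig_n$ comes from $d_*\eta=d\eta/\sig_n$. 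Extending $g_\lam$ by zero for $|t|>1$ and setting $\a=\lam+1$, the inner integral equals $\gamma_1(\a)\,(I^\a g_\lam)(0)$, where $I^\a$ is the one‑dimensional Riesz potential (\ref{rpot}). Thus $(W_\lam)(\xi)=\kappa(\lam)\,(I^{\lam+1}g_\lam)(0)$ with $\kappa(\lam)=\gam_{n,\lam}\,\sig_{n-1}\,\gamma_1(\lam+1)/\sig_n$.

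Next I would evaluate the limit of the scalar $\kappa(\lam)$. A direct cancellation of Gamma factors gives $\gam_{n,\lam}\,\gamma_1(\lam+1)=\pi\,2^{\lam+1}/\Gamma((n+1)/2)$, so that $\gam_{n,\lam}\,\gamma_1(\lam+1)\to \pi/\Gamma((n+1)/2)$ as $\lam\to -1$. Combining this with $\sig_{n-1}/\sig_n=\Gamma((n+1)/2)/(\pi^{1/2}\Gamma(n/2))$ yields $\lim_{\lam\to -1}\kappa(\lam)=\pi^{1/2}/\Gamma(n/2)=c_n$, exactly the constant in the statement. This step is routine bookkeeping with the formulas for $\gam_{n,\lam}$, $\gamma_1$, and $\sig_n$.

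It then remains to show $(I^{\lam+1}g_\lam)(0)\to (\F w_0)(\xi)$, which is where Lemma \ref{riesz-enLEM} enters, applied with $n=1$, $x=0$, and $v(\a,y)=g_{\a-1}(y)$ (restricting to $\lam$ near $-1$ so that $\a\in(0,\a_0]$ with some $\a_0<1$). The hypotheses are verified using that $w$ is continuous on the compact set $[-1,0]\times\sn$: this gives a uniform bound $|w|\le\const$, hence a uniform bound on $g_\lam$ near $t=0$ (condition (a)), while the support in $[-1,1]$ together with the same bound makes the tail integral $\intl_{|y|>r}|y|\,|g_{\a-1}(y)|\,dy$ finite uniformly in $\a$ (condition (b), recalling $|x-y|^{2-n}=|y|$ for $n=1$). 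The required value $v(0)$ is $\lim_{(\lam,t)\to(-1,0)}g_\lam(t)$; since $(1-t^2)^{(n-2)/2}\to 1$ and $w(\lam,\cdot)\to w_0$ uniformly (uniform continuity on the compact domain, so that $w_0=w(-1,\cdot)$), the spherical mean satisfies $(M_\xi\, w(\lam,\cdot))(t)\to (M_\xi w_0)(0)=(\F w_0)(\xi)$, the last equality being the $t=0$ case of (\ref{sphm0}). Hence Lemma \ref{riesz-enLEM} gives $(I^{\lam+1}g_\lam)(0)\to (\F w_0)(\xi)$, and multiplying by $\kappa(\lam)\to c_n$ completes the proof. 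The main obstacle is precisely this last verification: establishing the joint limit of the $\lam$‑dependent spherical mean as $(\lam,t)\to(-1,0)$, for which the uniform convergence $w(\lam,\cdot)\to w_0$ guaranteed by continuity on the compact parameter interval is essential.
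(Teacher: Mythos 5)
Your proof is correct and takes essentially the same route as the paper: the paper likewise uses (\ref{clop10}) to reduce $W_\lam(\xi)$ to a one-dimensional integral of $|y|^\lam$ against the same density $v_\xi(\lam,y)=(1-y^2)^{(n-2)/2}(M_\xi\, w(\lam,\cdot))(y)$, extends it by zero outside $[-1,1]$, and applies Lemma \ref{riesz-enLEM} with $n=1$, $x=0$, verifying (a) and (b) exactly as you do via the uniform boundedness and compact support coming from continuity of $w$ on $[-1,0]\times\sn$. The only cosmetic difference is that you work out the constant $\kappa(\lam)\to c_n$ explicitly, whereas the paper dismisses this with ``the constant $c_n$ can be easily calculated.''
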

\begin{proof} By (\ref{clop10}) and (\ref{sphm0}),
\be\label{parq1}
W_\lam(\xi)=\frac{\gam_{n,\lam}\, \sig_{n-1}}{\sig_{n}} \intl_{-1}^1  |y|^\lam v_\xi (\lam, y)\, dy,\ee
where
\be\label{parq2} v_\xi (\lam, y)=(1-y^2)^{(n-2)/2} \intl_{\sn\cap \xi^\perp}
\!\!w(\lam,  \sqrt{1-y^2}\, \psi +y\xi)\,d_*\psi.\ee
We can now apply  Lemma \ref {riesz-enLEM}  with $\a=\lam +1$, $n=1$, $x=0$,  to the integral (\ref{parq1}), assuming that $v_\xi (\lam, y)$  extends by zero outside $[-1,1]$. By the continuity of $w$,
\[ \lim\limits_{(\lam, y) \to (-1,0)} v_\xi (\lam, y)=\intl_{\sn\cap \xi^\perp}
\!\!w_0(\psi)\,d_*\psi=(\F w_0) (\xi)\]
and $|v_\xi (\lam, y)|\le \const$ uniformly in $(\lam, y)\in [-1,0] \times [-1,1]$. The latter gives (a) in  Lemma \ref {riesz-enLEM}. The validity of (b) in that lemma is trivial because $v_\xi (\cdot, \cdot)$ is uniformly bounded  and compactly supported for every $\xi$. The constant $c_n$ can be easily calculated. Hence the result follows.
\end{proof}

Our next example is related to the  Semyanistyi type integral
\be \label{sosa}  \R_\a (\th, t) =\frac{1}{\gam_1 (\a)}
 \intl_{\rn} \rho(\a, x)\, |x \cdot \th -t|^{\a -1} \,dx,\ee
\[\gamma_{1}(\a)=
  \frac{2^\a\pi^{1/2}\Gamma(\a/2)}{\Gamma((1-\a)/2)}, \qquad 0<\a<1, \]
and  the relevant  Radon transform  (\ref{rad}).

\begin{lemma}\label {WWWny} Let $0<\a_0 <1$, $B_r =\{x\in \rn :\, |x|\le r\}$, $r>0$, and let  $\rho$ be a function on $[0,\a_0] \times \rn$ satisfying the following conditions.

\vskip 0.2 truecm

\noindent {\rm (a)}  For each $\a\in [0,\a_0] $, $\; \rho (\a, x)=0$ whenever $x \notin B_r$;

\vskip 0.2 truecm

\noindent {\rm (b)}   $\; \rho$ is continuous on $[0,\a_0] \times B_r$;

\vskip 0.2 truecm

\noindent {\rm (c)}   $\lim\limits_{\a\to 0} \, \rho (\a, x)=\rho_0 (x)$.

\vskip 0.2 truecm

\noindent Then $\lim\limits_{\a\to 0}  \R_\a (\th, t) =(R\rho_0)(\th, t)$.
\end{lemma}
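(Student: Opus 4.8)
The plan is to reduce the $n$-dimensional integral defining $\R_\a(\th,t)$ to a one-dimensional Riesz-type integral in the direction $\th$, and then invoke the generalized limit result of Lemma \ref{riesz-enLEM} with dimension $1$. The key observation is that $|x\cdot\th-t|^{\a-1}$ depends on $x$ only through the scalar $x\cdot\th$, so integrating first over the hyperplanes $x\cdot\th=s$ produces exactly the kernel $|s-t|^{-(1-\a)}$ of the one-dimensional potential (\ref{rpotv}), whose $\a$-dependent integrand is the slice integral of $\rho$.

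First I would fix $\th\in\snm1$ and $t\in\bbr$ and write every $x\in\rn$ as $x=s\th+y$ with $s\in\bbr$ and $y\in\th^\perp$, so that $x\cdot\th=s$ and $dx=ds\,d_\th y$. Setting
\[
g(\a,s)=\intl_{\th^\perp}\rho(\a,s\th+y)\,d_\th y,
\]
Fubini's theorem gives
\[
\R_\a(\th,t)=\frac{1}{\gam_1(\a)}\intl_{\bbr}\frac{g(\a,s)}{|t-s|^{1-\a}}\,ds.
\]
Since $\gam_1(\a)$ is precisely the normalizing constant $\gamma_n(\a)$ of (\ref{rpot}) for $n=1$, the right-hand side is the quantity $V_\a(t)$ of (\ref{rpotv}) with $n=1$, $x=t$, and $v(\a,\cdot)=g(\a,\cdot)$. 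Moreover, the sought limit is identified at once from the definition (\ref{rad}): $g(0,t)=\intl_{\th^\perp}\rho_0(t\th+y)\,d_\th y=(R\rho_0)(\th,t)$.

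It then remains to check that $v(\a,s)=g(\a,s)$ satisfies the hypotheses of Lemma \ref{riesz-enLEM} at the point $x=t$. Because $\rho$ is continuous on the compact set $[0,\a_0]\times B_r$ and vanishes for $x\notin B_r$, it is uniformly bounded there, and the cross-section $\{y\in\th^\perp:\,s\th+y\in B_r\}$ is empty for $|s|>r$ and a ball of radius $\sqrt{r^2-s^2}$ otherwise; hence $g(\a,s)$ is bounded uniformly in $(\a,s)$ and supported in $s\in[-r,r]$. This yields condition (a) near $t$ immediately, and condition (b) follows since, with $n=1$, the weight $|t-s|^{n-2}=|t-s|^{-1}$ turns the tail integral into $\intl_{|s-t|>r'}|g(\a,s)|\,|t-s|\,ds$, which is finite and $\a$-uniformly bounded by the compact support and boundedness of $g$. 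The continuity hypothesis $\lim_{(\a,s)\to(0,t)}g(\a,s)=g(0,t)$ follows from condition (c) on $\rho$ together with the uniform bound, by dominated convergence over the uniformly bounded cross-sections. Lemma \ref{riesz-enLEM} then gives $\lim_{\a\to0}\R_\a(\th,t)=\lim_{\a\to0}V_\a(t)=g(0,t)=(R\rho_0)(\th,t)$, as claimed.

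The only genuinely delicate step is the continuity of the slice integral $g$ at $(0,t)$: one must let both $\a\to0$ and the hyperplane $x\cdot\th=s$ vary with $s\to t$ \emph{simultaneously}. I expect this to be the main obstacle, though it is handled routinely by dominated convergence because the integrands are uniformly bounded and supported in the fixed ball $B_r$; a possible jump of $\rho$ across $\partial B_r$ is harmless, since it only affects a set of measure zero in the limiting slice.
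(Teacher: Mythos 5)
Your proposal is correct and follows essentially the same route as the paper: Fubini reduces $\R_\a(\th,t)$ to a one-dimensional Riesz-type integral whose density is the slice integral of $\rho$, and Lemma \ref{riesz-enLEM} with $n=1$ then yields the limit, with the tail condition handled exactly as in the paper via the compact support and uniform boundedness of the slices. One tiny attribution slip: the joint limit $\lim_{(\a,s)\to(0,t)}g(\a,s)=g(0,t)$ rests on the joint continuity hypothesis (b) (with (c) merely identifying $\rho(0,\cdot)$ with $\rho_0$), not on (c) alone, which is precisely how the paper invokes these conditions.
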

\begin{proof} By Fubini's theorem,
\be \label{sguoty}
\R_\a (\th, t) =\frac{1}{\gam_1 (\a)} \intl_{\bbr} |y-t|^{\a -1}\, v_\th(\a,y)\,dy, \ee
where
\be \label{sgoty} v_\th(\a,y)=\intl_{x \cdot \th =y}\rho(\a, x)\, dx=\intl_{\th^\perp}  \rho(\a, y\th +u)\, d_\th u.\ee
Our aim is to show that the integral (\ref{sguoty}) falls into the scope of Lemma \ref{riesz-enLEM} with $n=1$.
If $|u|>r$, then $|y\th +u|=\sqrt{|u|^2 +|u|^2}>r$ and therefore, by (a),   $\rho(\a, y\th +u)=0$. Hence
\be \label{soty}
v_\th(\a,y)=\intl_{\th^\perp \cap B_r}  \rho(\a, y\th +u)\, d_\th u.\ee
This equality, in conjunction with (b) and (c), allows us to pass to the limit in (\ref {sgoty}) and obtain
\be \label{isgoty}
\lim\limits_{(\a,y)\to (0,t)} v_\th(\a,y)=\intl_{\th^\perp}  \rho_0(t\th +u)\, d_\th u=(R\rho_0)(\th, t).\ee
The latter gives the limit assumption in Lemma \ref{riesz-enLEM}. Further, by (\ref{soty}),
\be \label{iuuy}
|v_\th(\a,y)|\le \intl_{\th^\perp \cap B_r}  |\rho(\a, y\th +u)|\, d_\th u \le c\, v_{n-1} (r),\ee
where $c= \max\limits_{(\a, x)\in [0,\a_0] \times B_r} |\rho (\a, x)|$ and $v_{n-1} (r)$ is the volume of the $(n-1)$-dimensional ball of radius $r$. This implies (a) in  Lemma \ref{riesz-enLEM}, uniformly in $t$. Note also that  by (\ref{iuuy}),
\bea
\intl_{\bbr\setminus [t-r, t+r]}\! \!|v_\th(\a,y)| \,|y-t|\, dy&\le& r\intl_{\bbr\setminus [t-r, t+r]}\!\! dy\intl_{\th^\perp \cap B_r} \!\! |\rho(\a, y\th +u)|\, d_\th u\nonumber\\
&\le& 2cr^2 v_{n-1} (r). \nonumber\eea
Thus the integral in (\ref{sguoty}) satisfies all conditions of  Lemma \ref{riesz-enLEM}, and the result follows.
\end{proof}

\bibliographystyle{amsplain}

\end{document}